\newcommand{\Real}{\mathbb{R}}
\newcommand{\N}{\mathbb{N}}
\newcommand{\E}{\mathbb{E}}
\newcommand{\Prob}{\mathbb{P}}
\newcommand{\Id}{\mathrm{Id}}
\newtheorem{theorem}{Theorem}[section]
\newtheorem{lemma}[theorem]{Lemma}
\newtheorem{definition}{Definition}[section]
\title{Tensor train completion: local recovery guarantees via Riemannian optimization}
\author[1]{Stanislav Budzinskiy}
\author[1]{Nikolai Zamarashkin}
\affil[1]{Marchuk Institute of Numerical Mathematics RAS}
\date{}
\begin{document}
\maketitle

\begin{abstract}
In this work, we estimate the number of randomly selected elements of a tensor that with high probability guarantees local convergence of Riemannian gradient descent for tensor train completion. We derive a new bound for the orthogonal projections onto the tangent spaces based on the harmonic mean of the unfoldings' singular values and introduce a notion of core coherence for tensor trains. We also extend the results to tensor train completion with auxiliary subspace information and obtain the corresponding local convergence guarantees.
\end{abstract}

\tableofcontents

% \clearpage

\section{Introduction}

The problem of recovering algebraically structured data from scarce measurements has already become a classic one. The data under consideration are typically sparse vectors or low-rank matrices and tensors, while the measurements are obtained by applying a linear operator that satisfies a variant of the so-called \textit{restricted isometry property (RIP)} \cite{CandesTaoDecoding2005}. 

In this work, we focus on tensor completion, which consists in recovering a tensor in the tensor train (TT) format \cite{OseledetsTyrtyshnikovBreaking2009, OseledetsTensorTrain2011} from a small subset of its entries. Specifically, we consider it as a Riemannian optimization problem \cite{AbsilEtAlOptimization2009, UschmajewVandereyckenGeometric2020} on the smooth manifold of tensors with fixed TT ranks and derive sufficient conditions (essentially, the RIP) for local convergence of the Riemannian gradient descent. We further estimate the number of randomly selected entries of a tensor with low TT ranks that is sufficient for the RIP to hold with high probability and, as a consequence, for the Riemannian gradient descent to converge locally. We leave aside the the question of producing a starting point that lies close enough to the solution and concentrate instead on reducing the required number of samples.

Before presenting our main contributions to tensor completion, we suggest to step back and take a look at a particular case of two-dimensional tensors with low TT ranks, which is low-rank matrices. The research into matrix completion, as opposed to multi-dimensional tensor completion, is more mature: not only are the properties of the matrix completion problem better understood, but also the key ideas that were produced in the process have been extended to the tensor case and greatly influenced its development. Therefore, starting with matrix completion is both historically motivated and allows one to better grasp the main concepts: the notion of \textit{coherence} and the RIP. They lie at the heart of low-rank matrix completion, allowing one to prove that there are computationally feasible methods to solve the problem.

%%%%%%%%%%%%%%%%%%%%%%%%%%%%%%%%%%%%%%%%%%%%%%%%%%%%%%%%%%%%%%%%%%%%%%%%%%%%%%%%%

Let $A \in \Real^{n_1 \times n_2}$ be a rank-$r$ matrix and let $\Omega \subseteq [n_1] \times [n_2]$ with $[k] = \{ 1, \ldots, k\}$ be a collection of indices. Assuming that $A(i_1,i_2)$ are known for $(i_1,i_2) \in \Omega$, we aim to find a matrix $X \in \Real^{n_1 \times n_2}$ that solves the following rank minimization problem:
\begin{equation}
\label{intro:eq:rank_min}
    \text{rank}(X) \to \min \quad \text{s.t.} \quad X(i_1,i_2) = A(i_1,i_2), \, (i_1,i_2) \in \Omega.
\end{equation}
Two important questions arise: what are the requirements for \eqref{intro:eq:rank_min} to have a unique solution and whether the problem is computationally tractable.

Rank minimization problems with affine constraints are NP-hard in general \cite{RechtEtAlGuaranteed2010}, and Fazel \cite{FazelMatrix2002} developed a heuristic that consists in minimizing the nuclear norm, i.e. the sum of the singular values
\begin{equation*}
    \| X \|_* = \sum\nolimits_{k = 1}^{\min(n_1, n_2)} \sigma_k(X).
\end{equation*}
The matrix completion problem \eqref{intro:eq:rank_min} then turns into a convex optimization problem
\begin{equation}
\label{intro:eq:nuclear_min}
    \| X \|_* \to \min \quad \text{s.t} \quad X(i_1,i_2) = A(i_1,i_2), \, (i_1,i_2) \in \Omega,
\end{equation}
and can be solved as a semidefinite program. A breakthrough in understanding the properties of the nuclear norm minimization for matrix completion was achieved by Cand\`es, Recht, and Tao \cite{CandesRechtExact2009a, CandesTaoPower2010, RechtSimpler2011} who established sufficient conditions under which $A$ is the unique solution to \eqref{intro:eq:nuclear_min}. Their main contribution consists in showing that these sufficient conditions hold with high probability provided that sufficiently many indices $\Omega$ are chosen uniformly at random. To this end, the authors introduced several key notions and assumptions that limit the class of matrices amenable for completion. 

The \textit{coherence of a linear subspace} is one of them. For an $r$-dimensional linear subspace $T$ of $\Real^{n}$, its coherence $\mu(T)$ is defined as
\begin{equation}
\label{intro:eq:coherence}
    \mu(T) = \frac{n}{r} \max_{i \in [n]} \| \mathcal{P}_T e_i \|^2_2, \quad 1 \leq \mu(T) \leq \frac{n}{r},
\end{equation}
where $e_i \in \Real^n$ are canonical basis vectors and $\mathcal{P}_T : \Real^n \to T$ is the orthogonal projection operator. With a slight abuse of notation, we will write $\mu(U) = \mu(T)$ for any matrix $U$ whose columns span $T$. If $U$ happens to have orthonormal columns, the coherence of its column space can be computed as
\begin{equation*}
    \mu(U) = \frac{n}{r} \max_{i \in [n]} \| U^T e_i \|^2_2.
\end{equation*}
The worst case for matrix completion is a rank-$1$ matrix of the form $A = e_i e_j^T$: there is no hope for recovery unless we observe all of its entries. Similarly pessimistic are $A = u e_j^T$ and $A = e_i v^T$. For these examples, their column and/or row spaces have the maximum possible coherences. A reasonable assumption, then, is that both column and row spaces of $A$ are incoherent, i.e. their coherences are bounded by a small constant
\begin{equation}
\label{intro:eq:incoherent}
    \mu(U) \leq \mu_0, \quad \mu(V) \leq \mu_0.
\end{equation}
Here, $U \in \Real^{n_1 \times r}$ and $V \in \Real^{n_2 \times r}$ are the left and right singular factors of $A$.

Another object that plays an important role in \cite{CandesRechtExact2009a, CandesTaoPower2010, RechtSimpler2011} is the following linear subspace of $\Real^{n_1 \times n_2}$, associated with $A$,
\begin{equation}
\label{intro:eq:tangent_space}
    T_A = \{ UM + NV^T ~:~ M \in \Real^{r \times n_2},\, N \in \Real^{n_1 \times r} \} \subset \Real^{n_1 \times n_2}
\end{equation}
together with the corresponding orthogonal projection operator 
\begin{equation}
\label{intro:eq:tangent_proj}
    \mathcal{P}_{T_A} X = U U^T X + X V V^T - U U^T X V V^T \in T_A.
\end{equation}
In fact, $T_A$ is exactly the \textit{tangent space at $A$ to the smooth manifold of rank-$r$ matrices} \cite{LeeIntroduction2012}. Let $\mathcal{R}_{\Omega} : \Real^{n_1 \times n_2} \to \Real^{n_1 \times n_2}$ denote the sampling operator that sets to zero all elements of a matrix that do not lie in the index set $\Omega$:
\begin{equation}
\label{intro:eq:sampling_operator}
    \mathcal{R}_{\Omega} X = \sum\nolimits_{(i_1,i_2) \in \Omega} X(i_1,i_2) e_{i_1} e_{i_2}^T.
\end{equation}
One of the assumptions made in \cite{CandesRechtExact2009a, CandesTaoPower2010, RechtSimpler2011} to prove that $A$ is the unique solution to \eqref{intro:eq:nuclear_min} is that $\mathcal{R}_{\Omega}$ satisfies a variant of the RIP with $\varepsilon = 1/2$:
\begin{equation}
\label{intro:eq:rip}
    \| \rho^{-1} \mathcal{P}_{T_A} \mathcal{R}_{\Omega} \mathcal{P}_{T_A} - \mathcal{P}_{T_A} \| < \varepsilon, \quad \rho = \tfrac{|\Omega|}{n_1 n_2},
\end{equation}
where $\| \cdot \|$ is the operator norm induced by the Frobenius norm. Calling \eqref{intro:eq:rip} a RIP is justified, since its direct consequence is
\begin{equation*}
    (1 - \varepsilon) \| X \|_F \leq \| \rho^{-1} \mathcal{P}_{T_A} \mathcal{R}_{\Omega} X \|_F \leq  (1 + \varepsilon) \| X \|_F, \quad X \in T_A.
\end{equation*}
Cand\`es and Recht proved estimates on the number of known elements $|\Omega|$ that guarantees the RIP \eqref{intro:eq:rip}.

\begin{theorem}[\cite{CandesRechtExact2009a}, Theorem 4.1 and \cite{RechtSimpler2011}, Theorem 6]
\label{intro:theorem:recht_rip}
Let the matrix $A$ have incoherent column and row spaces \eqref{intro:eq:incoherent} and assume that the index set $\Omega$ is chosen uniformly at random with
\begin{equation*}
    |\Omega| \gtrsim \frac{1}{\varepsilon^2} \mu_0 r n \log(n), \quad n = \max(n_1, n_2).
\end{equation*}
Then the RIP \eqref{intro:eq:rip} holds with high probability.
\end{theorem}

The RIP \eqref{intro:eq:rip} alone, however, is not sufficient for $A$ to be the unique minimizer of \eqref{intro:eq:nuclear_min}. The best (to date) estimate on the number of known elements that guarantees that nuclear norm minimization \eqref{intro:eq:nuclear_min} solves the matrix completion problem was derived in \cite{DingChenLeave2020} with the help of leave-one-out analysis.

\begin{theorem}[\cite{DingChenLeave2020}, Theorem 2]
\label{intro:theorem:nnm_recovery}
Let the matrix $A$ have incoherent column and row spaces \eqref{intro:eq:incoherent} and assume that the index set $\Omega$ is chosen uniformly at random with
\begin{equation*}
    |\Omega| \gtrsim \mu_0 r \log(\mu_0 r) n \log(n), \quad n = \max(n_1, n_2).
\end{equation*}
Then $A$ is the unique minimizer of \eqref{intro:eq:nuclear_min}.
\end{theorem}
This bound is almost optimal, considering that
\begin{equation*}
    |\Omega| \gtrsim \mu_0 r n \log(n)
\end{equation*}
random samples are necessary to rule out the situation when several rank-$r$ matrices agree on the sample $\Omega$ (see \cite{CandesTaoPower2010}). It is also interesting to note that both necessary and sufficient conditions amount to only polylogarithmic oversampling as $r(n_1 + n_2 - r)$ parameters describe every rank-$r$ matrix of size $n_1 \times n_2$.

A different approach to matrix completion is to minimize the residual on the sampling set under the rank constraint:
\begin{equation}
\label{intro:eq:nonconvex}
    \| \mathcal{R}_{\Omega}X - \mathcal{R}_{\Omega}A \|^2_F \to \min \quad \text{s.t.} \quad \text{rank}(X) \leq r.
\end{equation}
Unlike \eqref{intro:eq:nuclear_min}, this optimization problem is non-convex and, as a result, can have multiple local minima and saddle points. A closely related perspective builds upon a geometric fact that the set
\begin{equation*}
    \mathcal{M}_r = \{ X \in \Real^{n_1 \times n_2} ~:~ \text{rank}(X) = r\}
\end{equation*}
is a smooth embedded submanifold of $\Real^{n_1 \times n_2}$ (see \cite{LeeIntroduction2012, UschmajewVandereyckenGeometric2020}). This means that the problem
\begin{equation}
\label{intro:eq:riemann}
    \| \mathcal{R}_{\Omega}X - \mathcal{R}_{\Omega}A \|^2_F \to \min \quad \text{s.t.} \quad X \in \mathcal{M}_r
\end{equation}
can be solved using Riemannian optimization methods \cite{VandereyckenLowRank2013}. The Riemannian gradient descent (RGD) reads as
\begin{equation}
\label{intro:eq:rgd}
    X_{t + 1} = \text{SVD}_r \left(X_t - \alpha_t \mathcal{P}_{T_{X_t} \mathcal{M}_r} \left[ \mathcal{R}_{\Omega} X_t - \mathcal{R}_{\Omega} A \right]  \right),
\end{equation}
where $\alpha_t > 0$ is the step size, $T_{X_t} \mathcal{M}_r$ is the tangent space to $\mathcal{M}_r$ at $X_t \in \mathcal{M}_r$ given by \eqref{intro:eq:tangent_space}, and $\mathcal{P}_{T_{X_t} \mathcal{M}_r}$ is the corresponding orthogonal projection operator \eqref{intro:eq:tangent_proj}. The local linear convergence of the RGD \eqref{intro:eq:rgd} was studied in \cite{WeiEtAlGuarantees2016}, where it was proved that the RIP \eqref{intro:eq:rip} is, essentially, the only sufficient condition.

\begin{theorem}[\cite{WeiEtAlGuarantees2016}, Theorem 2.2]
\label{intro:theorem:wei_riemann}
Assume that the sampling operator $\mathcal{R}_\Omega$ is bounded $\| \mathcal{R}_{\Omega} \| \leq C$ and satisfies the RIP \eqref{intro:eq:rip} with $\varepsilon < 1/22$. If the initial point $X_0 \in \mathcal{M}_r$ satisfies
\begin{equation*}
    \frac{\| X_0 - A \|_F}{\sigma_{\min}(A)} < \frac{\varepsilon \sqrt{\rho}}{2 C (1 + \varepsilon)},
\end{equation*}
where $\sigma_{\min}(A)$ is the smallest positive singular value of $A$, then the RGD \eqref{intro:eq:rgd} converges linearly to $A$ as
\begin{equation*}
    \| X_t - A \|_F < \left( \frac{18 \varepsilon}{1 - 4\varepsilon} \right)^t \| X_0 - A \|_F.
\end{equation*}
\end{theorem}

Our intention with this paper is to look at the notion of coherence \eqref{intro:eq:coherence} and the RIP \eqref{intro:eq:rip} in the multi-dimensional setting of tensors with low TT ranks, explore how they affect the properties of the RGD for TT completion, and relate them to estimate the required number of known elements. In pursuing our goal, we will follow a sequence of steps:
\begin{enumerate}
    \item prove a new theorem on local linear convergence of the RGD for TT completion (an analog of Theorem \ref{intro:theorem:wei_riemann});
    \item introduce a new notion of core coherence for tensors in the TT format (an extension of the coherence \eqref{intro:eq:coherence});
    \item formulate a new incoherence assumption (an analog of \eqref{intro:eq:incoherent}) and derive a new estimate on the number of randomly selected elements of a tensor in the TT format that guarantees the RIP with high probability (an analog of Theorem \ref{intro:theorem:recht_rip}).
\end{enumerate}
We set up the notation and list the basic facts about the TT format in Section~\ref{section:notation}. The next Section~\ref{section:contrib} collects what we consider to be the main contributions of our paper: their formulations, the motivation behind them, and a detailed comparison with the existing literature. Section~\ref{section:proofs} is entirely devoted to the proofs of our main results. In Section \ref{section:discussion}, we attempt to evaluate our findings and outline directions for the future research. The paper also has two Appendices: Appendix~\ref{section:related}, where we provide a broader context of tensor completion and overview other important developments in the field, and Appendix~\ref{section:side}, where we adapt our results on TT completion to a modified problem with auxiliary subspace information.
\section{Notation and preliminaries}\label{section:notation}
We denote matrices by capital letters $X, Y, Z$ and tensors by bold capital letters $\bm{X}, \bm{Y}, \bm{Z}$. An element of a $d$-dimensional tensor $\bm{X}$ at position $(i_1, \ldots, i_d)$ is marked as $\bm{X}(i_1, \ldots, i_d)$. The identity matrix of size $n$ is written as $I_n$. We denote its columns, the canonical basis vectors of $\Real^n$, by $e_j$ for all $j \in [n] = \{ 1, \ldots, n \}$, and the size of $e_j$ will be clear from the context. Calligraphic letters such as $\mathcal{P}, \mathcal{R}, \mathcal{S}$ denote linear operators acting on matrices or tensors, $\mathrm{Id}$ is the identity operator. The Frobenius norm of a matrix or tensor is denoted by $\| \cdot \|_F$. This is a Euclidean norm with the standard inner product
\begin{equation*}
    \| \bm{X} \|_F = \sqrt{\langle \bm{X}, \bm{X} \rangle_F}, \quad \langle \bm{X}, \bm{Y} \rangle_F = \sum_{i_1 = 1}^{n_1} \ldots \sum_{i_d = 1}^{n_d} \bm{X}(i_1, \ldots, i_d) \bm{Y}(i_1, \ldots, i_d).
\end{equation*}
The operator norm induced by it is marked as $\| \cdot \|$. We write $\| \cdot \|_2$ for the $l_2$ norm of a vector and the spectral norm of a matrix. 

The Kronecker product is denoted by $\otimes$, and $\circ$ stands for the outer product. For instance, for every multi-index $\omega =~(i_1, \ldots, i_d) \in~[n_1] \times \ldots \times [n_d]$, the corresponding canonical basis tensor $\bm{E}_\omega$ and its vectorization $e_{\omega}$ can be represented as
\begin{equation*}
    \bm{E}_\omega = e_{i_1} \circ \ldots \circ e_{i_d}, \quad e_{\omega} = e_{i_d} \otimes \ldots \otimes e_{i_1}.
\end{equation*}
A mode-$k$ product of a tensor $\bm{X} \in \Real^{n_1 \times \ldots \times n_d}$ with a matrix $U \in \Real^{m_k \times n_k}$ is denoted by $\times_k$ so that
\begin{equation*}
    \bm{Y} = \bm{X} \times_k U \in \Real^{n_1 \times \ldots \times n_{k-1} \times m_k \times n_{k+1} \times \ldots n_d}, \quad \bm{Y}(i_1, \ldots, i_{k-1}, j_k, i_{k+1}, \ldots, i_d) = \sum_{i_k = 1}^{n_k} \bm{X}(i_1, \ldots, i_d) U(j_k, i_k).
\end{equation*}

For a tensor $\bm{X} \in \Real^{n_1 \times \ldots \times n_d}$, its mode-$k$ flattening is a matrix of size $n_k \times \prod_{j \neq k} n_j$ denoted by $X_{(k)}$, the columns of $X_{(k)}$ are called mode-$k$ fibers. The $k$-th unfolding of $\bm{X}$ is a matrix of size $(n_1 \ldots n_k) \times (n_{k+1} \ldots n_d)$ denoted by $X^{\langle k \rangle}$. A tensor is said to be in the tensor train (TT) format \cite{OseledetsTyrtyshnikovBreaking2009, OseledetsTensorTrain2011} if each of its elements can be evaluated according to
\begin{equation*}
    \bm{X}(i_1, \ldots, i_d) = \sum_{\alpha_1 = 1}^{r_1} \ldots \sum_{\alpha_{d-1} = 1}^{r_{d-1}} G_1(i_1, \alpha_1) \bm{G}_2(\alpha_1, i_2, \alpha_2) \ldots \bm{G}_{d-1}(\alpha_{d-2}, i_{d-1}, \alpha_{d-1}) G_d(\alpha_{d-1}, i_d).
\end{equation*}
The matrices $G_1 \in \Real^{n_1 \times r_1}$, $G_d \in \Real^{r_{d-1} \times n_d}$ and the 3-dimensional tensors $\bm{G}_k \in \Real^{r_{k-1} \times n_k \times r_k}$ are called TT cores. The upper limits of the summations, $r_k \in \N$, are conventionally combined into a tuple $\bm{r} = (r_1, \ldots, r_{d-1})$ that is called the TT rank of the decomposition. To make the notation more consistent, we will write $\bm{G}_1 \in \Real^{r_0 \times n_1 \times r_1}$ and $\bm{G}_d \in \Real^{r_{d-1} \times n_d \times r_d}$ with $r_0 = r_d = 1$ for the first and last TT cores. We will also denote by $\bm{X} = [\bm{G}_1, \bm{G}_2, \ldots, \bm{G}_d]$ the TT representation itself.

Every tensor $\bm{X}$ can be represented in the TT format. This can be achieved with the TT-SVD algorithm \cite{OseledetsTensorTrain2011}, and the TT ranks of the resulting representation are equal to the ranks of the unfolding matrices $X^{\langle k \rangle}$. The unfolding matrices can be factorized as products of interface matrices $X^{\langle k \rangle} = X_{\leq k} X_{\geq k+1}^T$, which can be defined recursively as
\begin{equation}\label{eq:tt:interface_rec}
\begin{split}
    X_{\leq 1} &= G_1, \quad X_{\leq k} = (I_{n_k} \otimes X_{\leq k-1})G_{k}^L \in \Real^{(n_1 \ldots n_k) \times r_k},\\
    X_{\geq d} &= G_d^T, \quad X_{\geq k+1} = (X_{\geq k+2} \otimes I_{n_{k+1}}) (G_{k+1}^R)^T \in \Real^{(n_{k+1} \ldots n_d) \times r_k}.
\end{split}
\end{equation}
The matrices $G_k^{L} \in \Real^{r_{k-1}n_k \times r_k}$ and $G_k^R \in \Real^{r_{k-1} \times n_k r_k}$ are the left and right unfoldings of the $k$-th TT core $\bm{G}_k$, respectively.

While a tensor can admit various TT represenations with different TT ranks, under certain minimality conditions of the representation (satisfied by what TT-SVD outputs) the TT ranks are unique \cite{HoltzEtAlmanifolds2012}. Namely, for every TT core its left and right unfoldings must be full-rank. This justifies the notion of the TT rank of a tensor
\begin{equation*}
    \text{rank}_{TT}(\bm{X}) = (\text{rank}(X^{\langle 1 \rangle}), \ldots, \text{rank}(X^{\langle d-1 \rangle})).
\end{equation*}
The set of tensors of tensors with fixed TT rank will be denoted by
\begin{equation*}
    \mathcal{M}_{\bm{r}} = \{ \bm{X} \in \Real^{n_1 \times \ldots \times n_d}~:~\text{rank}_{TT}(\bm{X}) = \bm{r} \},
\end{equation*}
and it is a smooth embedded submanifold\cite{HoltzEtAlmanifolds2012, UschmajewVandereyckenGeometric2020} of $\Real^{n_1 \times \ldots \times n_d}$ of dimension 
\begin{equation*}
    \dim \mathcal{M}_{\bm{r}} = \sum_{k=1}^{d} r_{k-1} n_k r_{k} - \sum_{k=1}^{d-1} r_k^2.
\end{equation*}

Among all minimal representations specifically useful are $k$-orthogonal representations 
\begin{equation*}
    \bm{X} = [\bm{U}_1, \ldots \bm{U}_{k-1}, \bm{G}_k, \bm{V}_{k+1}, \ldots, \bm{V}_d]
\end{equation*}
such that every $\bm{U}_i$ is left-orthogonal and every $\bm{V}_j$ is right-orthogonal
\begin{equation*}
    (U_i^L)^T U_i^L = I_{r_i}, \quad i = 1, \ldots, k-1, \quad V_j^R (V_j^R)^T = I_{r_{j-1}}, \quad j = k+1, \ldots, d.
\end{equation*}
We call $1$-orthogonal and $d$-orthogonal representations right- and left-orthogonal, respectively. A minimal $k$-orthogonal representation of a tensor can be computed with TT-SVD followed by a partial sweep of QR (or RQ) orthogonalizations.

The truncated TT-SVD algorithm can be used to approximate $\bm{X}$ with a tensor of given TT rank $\bm{r} \in \N^{d-1}$. Unlike the truncated SVD for matrices, the resulting approximation is not optimal but is quasi-optimal nonetheless
\begin{equation*}
    \| \text{TT-SVD}_{\bm{r}}(\bm{X}) - \bm{X} \|_F \leq \sqrt{d - 1} \| \text{opt}_{\bm{r}}(\bm{X}) - \bm{X} \|_F,
\end{equation*}
where $\text{opt}_{\bm{r}}(\bm{X})$ is the best rank-$\bm{r}$ approximation of $\bm{X}$ in the Frobenius norm.

\section{Our contributions}\label{section:contrib}

%%%%%%%%%%%%%%%%%%%%%%%%%%%%%%%%%%%%%%%%%%%%%%%%%%%%%%%%%%%%%%%%%%%%%%%%%%%%%%%%%

\subsection{Curvature bound}
For the needs of the convergence analysis, we are interested in estimating how quickly the projection operator onto the tangent space $\mathcal{P}_{T_{\bm{X}} \mathcal{M}_{\bm{r}}}$ changes as we move around on the manifold $\mathcal{M}_{\bm{r}}$. Another concern is the following. Every $\bm{X} \in \mathcal{M}_{\bm{r}}$ belongs to its own tangent space $\bm{X} \in T_{\bm{X}} \mathcal{M}_r$ but it is also important to know how well $\bm{X}$ can be approximated by other tangent spaces in its neighborhood, which essentially gives a bound on the curvature of the manifold. 

Our first result (Lemma~\ref{tt:lemma:curvature_bound}) is a new curvature bound for $\mathcal{M}_{\bm{r}}$. Denote by $\sigma_{\min}(\cdot)$ the smallest positive singular value of a matrix and, with some abuse of notation, the harmonic mean of the smallest positive singular values of the unfoldings of a tensor
\begin{equation*}
    \sigma_{\min}(\bm{X}) = (d-1)\left(\sum_{k=1}^{d-1} \frac{1}{\sigma_{\min}(X^{\langle k \rangle})}\right)^{-1}.    
\end{equation*}
\begin{lemma}\label{tt:lemma:curvature_bound}
For every pair of tensors $\bm{X},\bm{\Tilde{X}} \in \mathcal{M}_{\bm{r}}$ with the same TT ranks it holds that
\begin{equation*}
        \| (\mathrm{Id} - \mathcal{P}_{T_{\bm{\tilde{X}}} \mathcal{M}_{\bm{r}}}) \bm{X} \|_F \leq (d-1) \frac{\| \bm{X} - \bm{\tilde{X}} \|_F^2}{\sigma_{\min}(\bm{X})} \quad \text{and} \quad \| \mathcal{P}_{T_{\bm{X}} \mathcal{M}_{\bm{r}}} - \mathcal{P}_{T_{\bm{\tilde{X}}} \mathcal{M}_{\bm{r}}} \| \leq 2 (d-1) \frac{\| \bm{X} - \bm{\tilde{X}} \|_F}{\sigma_{\min}(\bm{X})}.
\end{equation*}
\end{lemma}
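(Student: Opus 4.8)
The plan is to reduce both estimates to a single matrix subspace-perturbation bound applied unfolding by unfolding, with the harmonic mean emerging from a triangle inequality over the $d-1$ unfoldings. First I would record the algebraic backbone. Starting from \eqref{eq:tt:proj_tangent} and using $\mathcal{P}_{\leq 0} = \Id$, the sum $\sum_{k=1}^{d-1}(\mathcal{P}_{\leq k-1} - \mathcal{P}_{\leq k})$ telescopes to $\Id - \mathcal{P}_{\leq d-1}$, which yields the clean complement identity
\[
\Id - \mathcal{P}_{T_{\bm X}\mathcal{M}_{\bm r}} = \sum_{k=1}^{d-1}(\mathcal{P}_{\leq k-1} - \mathcal{P}_{\leq k})(\Id - \mathcal{P}_{\geq k+1}).
\]
By the Kronecker reshaping $U_{\leq k} = (I_{n_k}\otimes U_{\leq k-1})U_k^L$, the operator $\mathcal{P}_{\leq k-1} - \mathcal{P}_{\leq k}$ acts on the $k$-th unfolding as left multiplication by $\Pi_k := (I_{n_k}\otimes P_{\leq k-1}) - P_{\leq k}$, which is an orthogonal projector (because $\mathrm{col}(U_{\leq k})\subseteq\mathrm{col}(I_{n_k}\otimes U_{\leq k-1})$) annihilating $\mathrm{col}(U_{\leq k})$, while $\Id - \mathcal{P}_{\geq k+1}$ acts as right multiplication by $I - P_{\geq k+1}$. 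The one matrix fact I would isolate is a Wedin-type bound: since $X^{\langle k\rangle} = X_{\leq k}X_{\geq k+1}^T$ and $\tilde X^{\langle k\rangle}$ share the rank $r_k$, one has $\|P_{\geq k+1}(I - \tilde P_{\geq k+1})\| \le \|X^{\langle k\rangle} - \tilde X^{\langle k\rangle}\|/\sigma_{\min}(X^{\langle k\rangle})$, proved by writing $X^{\langle k\rangle}(I-\tilde P_{\geq k+1}) = (X^{\langle k\rangle}-\tilde X^{\langle k\rangle})(I-\tilde P_{\geq k+1})$ and left-multiplying by $X_{\leq k}^+$; the same holds for the column-space projectors, and since the subspace angle is symmetric it is also bounded using $\sigma_{\min}(\tilde X^{\langle k\rangle})$.

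For the first inequality I would apply the complement identity at $\tilde{\bm X}$ and evaluate on $\bm X$, so the $k$-th summand becomes $\tilde\Pi_k X^{\langle k\rangle}(I - \tilde P_{\geq k+1})$ on the $k$-th unfolding. Inserting $\bm X$'s own row-space projector $P_{\geq k+1}$ on the right and using that $\tilde\Pi_k$ kills $\tilde X^{\langle k\rangle}$, I rewrite this as $\tilde\Pi_k (X^{\langle k\rangle}-\tilde X^{\langle k\rangle})P_{\geq k+1}(I-\tilde P_{\geq k+1})$, whose Frobenius norm is at most $\|\bm X-\tilde{\bm X}\|_F \cdot \|\bm X-\tilde{\bm X}\|_F/\sigma_{\min}(X^{\langle k\rangle})$. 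Summing the $d-1$ contributions with the triangle inequality gives $\|\bm X-\tilde{\bm X}\|_F^2 \sum_k \sigma_{\min}(X^{\langle k\rangle})^{-1}$, and the harmonic-mean definition of $\sigma_{\min}(\bm X)$ delivers the claim exactly. This is precisely why the harmonic mean, rather than a minimum or an $\ell^2$ combination, is the natural quantity.

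For the second inequality I would use the standard projector identity $\|\mathcal{P}_{T_{\bm X}} - \mathcal{P}_{T_{\tilde{\bm X}}}\| = \max(\|(\Id - \mathcal{P}_{T_{\tilde{\bm X}}})\mathcal{P}_{T_{\bm X}}\|, \|(\Id - \mathcal{P}_{T_{\bm X}})\mathcal{P}_{T_{\tilde{\bm X}}}\|)$ and bound each factor by the same recipe. Fixing $\bm W = \mathcal{P}_{T_{\bm X}}\bm Z$ and splitting its $k$-th unfolding as $W^{\langle k\rangle} = A_k + B_k$ along the orthogonal decomposition $T_{\bm X}\mathcal{M}_{\bm r} = T_1\oplus\ldots\oplus T_d$ — with $A_k$ carrying row space in $\mathrm{col}(V_{\geq k+1})$ and $B_k$ column space in $\mathrm{col}(U_{\leq k})$ — the summand $\tilde\Pi_k W^{\langle k\rangle}(I-\tilde P_{\geq k+1})$ splits into a piece controlled by $\|P_{\geq k+1}(I-\tilde P_{\geq k+1})\|$ and a piece controlled by $\|\tilde\Pi_k P_{\leq k}\| = \|\tilde\Pi_k(P_{\leq k}-\tilde P_{\leq k})\|$, both at most $\|\bm X-\tilde{\bm X}\|_F/\sigma_{\min}(X^{\langle k\rangle})$ by the symmetric Wedin bound. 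Since $\|A_k\|_F,\|B_k\|_F \le \|\bm W\|_F$, each summand is at most $2\|\bm X-\tilde{\bm X}\|_F\,\sigma_{\min}(X^{\langle k\rangle})^{-1}\|\bm W\|_F$; the triangle inequality over $k$ then produces the factor $2$ and the harmonic mean, while symmetry of the angles lets the reverse factor be bounded with the same $\sigma_{\min}(\bm X)$.

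The main obstacle I anticipate is not the matrix perturbation estimate, which is classical, but the bookkeeping that makes the telescoping terms genuinely one-unfolding objects: verifying that $\Pi_k$ is an orthogonal projector annihilating exactly the column span of $X^{\langle k\rangle}$, and that the left and right actions commute with the reshaping between consecutive unfoldings. Once these reshaping identities are secured, both bounds follow from a term-by-term application of the same inequality and a single triangle inequality.
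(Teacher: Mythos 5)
Your proposal is correct, and it is worth separating its two halves. For the first inequality you reproduce, in unfolding language, essentially the paper's own argument: the same telescoped complement identity for $\Id - \mathcal{P}_{T_{\bm{\tilde{X}}}\mathcal{M}_{\bm{r}}}$, the same observation that the projector difference annihilates $\bm{\tilde{X}}$ so that a factor $\bm{X}-\bm{\tilde{X}}$ can be extracted, and the same rank-$r_k$ perturbation bound $\|P_{\geq k+1}(I-\tilde{P}_{\geq k+1})\| \leq \|\bm{X}-\bm{\tilde{X}}\|_F/\sigma_{\min}(X^{\langle k \rangle})$, which the paper proves via the truncated SVD $X^{\langle k \rangle}=U\Sigma V^T$ and you prove via the pseudoinverse of the interface factorization --- the same classical computation. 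For the second inequality you genuinely depart from the paper. The paper verifies the explicit operator identity
\begin{equation*}
    \mathcal{P} - \tilde{\mathcal{P}} = \sum_{k = 1}^{d - 1} \left[ (\mathcal{P}_{\leq k} - \mathcal{\tilde{P}}_{\leq k}) (\mathcal{P}_{\geq k + 2} - \mathcal{P}_{\geq k + 1}) + (\mathcal{\tilde{P}}_{\leq k - 1} - \mathcal{\tilde{P}}_{\leq k}) (\mathcal{P}_{\geq k + 1} - \mathcal{\tilde{P}}_{\geq k + 1}) \right]
\end{equation*}
and bounds its $2(d-1)$ terms by submultiplicativity, never needing the $T_1 \oplus \ldots \oplus T_d$ structure of tangent vectors. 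You instead invoke the two-projections identity $\|\mathcal{P}-\tilde{\mathcal{P}}\| = \max\bigl(\|(\Id-\tilde{\mathcal{P}})\mathcal{P}\|, \|(\Id-\mathcal{P})\tilde{\mathcal{P}}\|\bigr)$ together with symmetry of the gap between subspaces of equal dimension, and then estimate the one-sided quantity by splitting the $k$-th unfolding of a tangent vector $\bm{W}\in T_{\bm{X}}\mathcal{M}_{\bm{r}}$ as $A_k+B_k$ along $T_1\oplus\ldots\oplus T_d$, with $A_k=A_kP_{\geq k+1}$, $B_k=P_{\leq k}B_k$, and $\|A_k\|_F,\|B_k\|_F\leq\|\bm{W}\|_F$ by orthogonality of the $T_j$; each piece contributes one Wedin factor, yielding the same constant $2$. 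Both routes are sound, and both facts you import are classical; note, moreover, that the paper itself implicitly uses the equal-rank gap symmetry when it writes $\|P_{\leq k}-\tilde{P}_{\leq k}\| = \|(I-\tilde{P}_{\leq k})P_{\leq k}\|$, so your appeal to it is no extra burden. What the paper's expansion buys is a self-contained, purely algebraic verification; what yours buys is that it avoids guessing and checking that identity, makes explicit where the hypothesis of equal TT-ranks genuinely enters (equal tangent-space dimension is what lets the final bound be stated with $\sigma_{\min}(\bm{X})$ alone rather than the worse of the two tensors), and gives the one-sided estimate symmetrically for free. In a full write-up, do spell out the row/column-space claims for $A_k$ and $B_k$ from the block TT-representations of the tangent components, since they carry the weight of your version of the argument.
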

Similar bounds were obtained in \cite{LubichEtAlDynamical2013} (Lemma~4.5) for tensors in the hierarchical Tucker format, of which TT is a particular case. Most importantly, the bounds in Lemma~\ref{tt:lemma:curvature_bound} remain valid for \textit{any} tensor $\bm{\Tilde{X}} \in \mathcal{M}_{\bm{r}}$, while those in  \cite{LubichEtAlDynamical2013} hold only in a neighborhood of $\bm{X}$. Analogous global upper bounds were also derived in \cite{CaiEtAlProvable2021a} for $\| (\mathrm{Id} - \mathcal{P}_{T_{\bm{\tilde{X}}} \mathcal{M}_{\bm{r}}}) \bm{X} \|_F$ (Lemma~27) and for $\| \mathcal{P}_{T_{\bm{X}} \mathcal{M}_{\bm{r}}} - \mathcal{P}_{T_{\bm{\tilde{X}}} \mathcal{M}_{\bm{r}}} \|$ (Eq.~38). However, our bounds are tighter: 1) the constants are smaller; 2) we use the harmonic mean of the singular values, while \cite{LubichEtAlDynamical2013, CaiEtAlProvable2021a} work with the minimum of the singular values, and $\sigma_{\min}(\bm{X}) \geq \min_{k \in [d-1]} \sigma_{\min}(X^{\langle k \rangle})$. 

The two types of averaging coincide when all $\sigma_{\min}(X^{\langle k \rangle})$ are the same. For instance, if we take Lemma~\ref{tt:lemma:curvature_bound} with $d = 2$, we recover the curvature bounds for the matrix manifold as in \cite{WeiEtAlGuarantees2016} (Lemma~4.1). The situation is different when $d > 2$ and some of the unfoldings are ill-conditioned. This can occur when a full-rank tensor is approximated in the TT format with overestimated TT ranks. Assume that $\sigma_{\min}(X^{\langle k \rangle})$ are equal to $1$ for $d-1-s$ unfoldings and to $0 < \varepsilon < 1$ for the remaining $s$ unfoldings. We have
\begin{equation*}
    \min_{k \in [d-1]} \sigma_{\min}(X^{\langle k \rangle}) = \varepsilon, \quad \sigma_{\min}(\bm{X}) = 1 - \frac{s(1 - \varepsilon)}{s + (d-1-s)\varepsilon},
\end{equation*}
and $\sigma_{\min}(\bm{X})$ can be seen as a convex combination of $1$ and $\varepsilon$ with a dimension-dependent coefficient $0 < \alpha_{s,d} < 1$:
\begin{equation*}
    \sigma_{\min}(\bm{X}) = (1 - \alpha_{s,d}) + \alpha_{s,d} \varepsilon, \quad \alpha_{s,d} = \frac{s}{s + (d-1-s)\varepsilon}.
\end{equation*}
Lemma~\ref{tt:lemma:curvature_bound} shows that the curvature is tolerant to ill-conditioned unfoldings for high-dimensional tensors, while the previous results overestimate it. For example, take $\varepsilon = 10^{-2}$ and $d = 100$. Such high-dimensional tensors appear when the quantized TT format is used to approximate differential operators \cite{KazeevKhoromskijLow2012} and solve differential equations \cite{DolgovEtAlFast2012}. We get $\sigma_{\min}(\bm{X}) \approx 0.17$ for $s = 5$ and $\sigma_{\min}(\bm{X}) \approx 0.09$ for $s = 10$, which are about $d/s$ times larger than $\varepsilon$. As a result, the previous curvature bounds $(d - 1) / \varepsilon$ go down to about $s / \varepsilon$. If we let $d$ grow with $s$ and $\varepsilon$ fixed, the asymptotics are
\begin{equation*}
    \sigma_{\min}(\bm{X}) = 1 - \frac{s}{d} (\varepsilon^{-1} - 1) + O\left( \frac{1}{d^2} \right).
\end{equation*}

%%%%%%%%%%%%%%%%%%%%%%%%%%%%%%%%%%%%%%%%%%%%%%%%%%%%%%%%%%%%%%%%%%%%%%%%%%%%%%%%%

\subsection{Local convergence of Riemannian gradient descent}
\subsubsection{Tensor recovery}
The TT completion problem is a particular instance of a more general TT recovery problem with a linear measurement operator $\mathcal{R} : \Real^{n_1 \times \ldots \times n_d} \to \Real^{s}$, where one needs to recover a tensor $\bm{A}$ in the TT format given the measurements $\mathcal{R} \bm{A}$:
\begin{equation*}
    \| \mathcal{R} \bm{X} - \mathcal{R} \bm{A} \|_2^2 \to \min \quad \text{s.t.} \quad \bm{X} \in \mathcal{M}_{\bm{r}}.
\end{equation*}
This Riemannian optimization problem can be solved with the RGD, and to explicitly formulate the method, we need to choose the step size and the retraction mapping \cite{AbsilEtAlOptimization2009}. The truncated TT-SVD is a valid retraction on $\mathcal{M}_{\bm{r}}$ (see \cite{SteinlechnerRiemannian2016}), hence our RGD step is
\begin{equation}
\label{recov:eq:rgd}
    \bm{X}_{t+1} = \text{TT-SVD}_{\bm{r}} \left( \bm{X}_t - \alpha_t \bm{Y}_t \right) \in \mathcal{M}_{\bm{r}}, \quad \bm{Y}_t = \mathcal{P}_{\bm{X}_t} \mathcal{R}^* [\mathcal{R}\bm{X}_t - \mathcal{R}\bm{A}] \in T_{\bm{X}_t} \mathcal{M}_{\bm{r}},
\end{equation}
where we use $\mathcal{P}_{\bm{X}_t}$ as an alias for $\mathcal{P}_{T_{\bm{X}_t} \mathcal{M}_{\bm{r}}}$ and the step size is chosen via exact line search in the tangent space $T_{\bm{X}_t} \mathcal{M}_{\bm{r}}$:
\begin{equation*}
\label{recov:eq:step_size}
    \alpha_t = \| \bm{Y}_t \|_F^2 / \| \mathcal{R} \bm{Y}_t \|_F^2.
\end{equation*}
The appeal of this step size is in its closed-form formula, which greatly simplifies the analysis of the RGD \eqref{recov:eq:rgd}. From the numerical perspective, such $\alpha_t$ can be used as a good starting point for the backtracking scheme applied along the geodesic \cite{VandereyckenLowRank2013}; typically, though, $\alpha_t$ itself is sufficient \cite{SteinlechnerRiemannian2016}.

In general tensor recovery problems, the measurement operator $\mathcal{R}$ is assumed to exhibit a more standard (than \eqref{intro:eq:rip}) variant of the RIP. We will say that $\mathcal{R}$ satisfies the \textit{standard RIP} of order $\bm{r}$ if the following two-sided bound \cite{RauhutEtAlLow2017}
\begin{equation}
\label{recov:eq:rip_vanilla}
    (1 - \delta_{\bm{r}}) \| \bm{X} \|_F^2 \leq \| \mathcal{R} \bm{X} \|_2^2 \leq (1 + \delta_{\bm{r}}) \| \bm{X} \|_F^2
\end{equation}
holds for all tensors $\bm{X}$ of TT rank at most $\bm{r}$ with a RIP constant $0 < \delta_{\bm{r}} < 1$. An example of a measurement operator for which the standard RIP \eqref{recov:eq:rip_vanilla} holds with high probability are i.i.d. random Gaussian measurements \cite{RauhutEtAlLow2017}. The sampling operator $\mathcal{R}_{\Omega}$ of tensor completion, however, cannot fulfill the standard RIP \eqref{recov:eq:rip_vanilla} for \textit{all} tensors with low TT ranks (consider a sparse tensor). Nonetheless, the RGD convergence analyses for TT recovery and TT completion are very similar, and the proof of the former (which is slightly easier) can be easily adapted to fit the latter. This brings us to the new Theorem~\ref{recov:theorem:convergence}, which establishes local linear convergence of the RGD \eqref{recov:eq:rgd} for the TT recovery problem.

\begin{theorem}
\label{recov:theorem:convergence}
Let $\bm{A} \in \mathcal{M}_{\bm{r}}$ be a tensor of TT rank $\bm{r}$. Suppose that the measurement operator $\mathcal{R}$ satisfies the standard RIP \eqref{recov:eq:rip_vanilla} of order $2\bm{r}$ with a RIP constant $0 < \delta_{2\bm{r}} < 1$ and is bounded $\| \mathcal{R}^* \mathcal{R} \| \leq C$. Then the error on the current step of the RGD \eqref{recov:eq:rgd} is estimated via the previous error
\begin{equation*}
    \| \bm{X}_{t+1} - \bm{A} \|_F \leq \beta_t \| \bm{X}_t - \bm{A} \|_F
\end{equation*}
with a constant
\begin{equation*}
    \beta_t = (1 + \sqrt{d-1}) \left[ \frac{2\delta_{2\bm{r}}}{1 - \delta_{2\bm{r}}} + \left( 1 + \frac{C}{1 - \delta_{2\bm{r}}} \right) (d-1) \frac{\| \bm{X}_t - \bm{A} \|_F}{\sigma_{\min}(\bm{A})} \right].
\end{equation*}
If $\delta_{2\bm{r}} < (3 + 2 \sqrt{d-1})^{-1}$ and the initial condition $\bm{X}_0 \in \mathcal{M}_{\bm{r}}$ satisfies
\begin{equation*}
    (d-1) \frac{\| \bm{X}_0 - \bm{A} \|_F}{\sigma_{\min}(\bm{A})} < \frac{1}{1 + C - \delta_{2\bm{r}}} \left( \frac{1 - \delta_{2\bm{r}}}{1 + \sqrt{d-1}} - 2\delta_{2\bm{r}} \right), 
\end{equation*}
the iterations of RGD converge linearly to $\bm{A}$ at a rate
\begin{equation*}
    \| \bm{X}_{t+1} - \bm{A} \|_F < \beta_0^{t+1} \| \bm{X}_0 - \bm{A} \|_F, \quad \beta_0 < 1.
\end{equation*}
If $\mathcal{R}$ satisfies the standard RIP \eqref{recov:eq:rip_vanilla} of order $3\bm{r}$, the above results remain valid when $C$ is replaced with $1 + \delta_{3\bm{r}}$.
\end{theorem}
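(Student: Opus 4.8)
The plan is to bound the error in two stages: first absorb the retraction through the quasi-optimality of TT-SVD, then analyze the tangent-space gradient step. Write $\bm{W}_t = \bm{X}_t - \alpha_t \bm{Y}_t$ for the iterate before retraction, so that $\bm{X}_{t+1} = \text{TT-SVD}_{\bm{r}}(\bm{W}_t)$. Since $\bm{A} \in \mathcal{M}_{\bm{r}}$ has TT-rank $\bm{r}$, its best rank-$\bm{r}$ approximation is no further from $\bm{W}_t$ than $\bm{A}$ itself, so the quasi-optimality estimate gives $\| \text{TT-SVD}_{\bm{r}}(\bm{W}_t) - \bm{W}_t \|_F \le \sqrt{d-1}\,\| \text{opt}_{\bm{r}}(\bm{W}_t) - \bm{W}_t \|_F \le \sqrt{d-1}\,\| \bm{A} - \bm{W}_t \|_F$. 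Combined with the triangle inequality this yields $\| \bm{X}_{t+1} - \bm{A} \|_F \le (1 + \sqrt{d-1})\,\| \bm{W}_t - \bm{A} \|_F$, which explains the prefactor in $\beta_t$ and reduces the problem to estimating $\| \bm{W}_t - \bm{A} \|_F$.

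Next I would split the error along the tangent space. Abbreviating $\mathcal{P} = \mathcal{P}_{\bm{X}_t}$, decompose $\bm{X}_t - \bm{A} = \bm{P}_t + \bm{Q}_t$ with $\bm{P}_t = \mathcal{P}(\bm{X}_t - \bm{A})$ and $\bm{Q}_t = (\Id - \mathcal{P})(\bm{X}_t - \bm{A})$. Because $\bm{X}_t$ lies in its own tangent space, $\bm{Q}_t = -(\Id - \mathcal{P})\bm{A}$, and Lemma \ref{tt:lemma:curvature_bound} bounds $\| \bm{Q}_t \|_F \le \| \bm{X}_t - \bm{A} \|_F^2 / \sigma_{\min}(\bm{A})$. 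Substituting $\bm{Y}_t = \mathcal{P}\mathcal{R}^*\mathcal{R}(\bm{P}_t + \bm{Q}_t)$ into $\bm{W}_t - \bm{A} = \bm{P}_t + \bm{Q}_t - \alpha_t \bm{Y}_t$ and using $\mathcal{P}\bm{P}_t = \bm{P}_t$, I would regroup into three pieces:
\begin{equation*}
    \bm{W}_t - \bm{A} = (\mathcal{P} - \alpha_t \mathcal{P}\mathcal{R}^*\mathcal{R}\mathcal{P})\bm{P}_t - \alpha_t \mathcal{P}\mathcal{R}^*\mathcal{R}\bm{Q}_t + \bm{Q}_t,
\end{equation*}
and estimate each in the Frobenius norm. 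The second piece is controlled by $\alpha_t \| \mathcal{R}^*\mathcal{R} \|\,\| \bm{Q}_t \|_F$ and the third is simply $\| \bm{Q}_t \|_F$, so both are governed by the curvature bound above.

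The crux is the first piece, where the exact line-search step size $\alpha_t = \| \bm{Y}_t \|_F^2 / \| \mathcal{R}\bm{Y}_t \|_F^2$ must be converted into a clean constant; this is the step I expect to be the main obstacle. Since $\bm{Y}_t$ is a tangent vector of TT-rank at most $2\bm{r}$, RIP of order $2\bm{r}$ pins the step size to $(1 + \delta_{2\bm{r}})^{-1} \le \alpha_t \le (1 - \delta_{2\bm{r}})^{-1}$. Writing $\mathcal{P} - \alpha_t \mathcal{P}\mathcal{R}^*\mathcal{R}\mathcal{P} = (1 - \alpha_t)\mathcal{P} + \alpha_t(\mathcal{P} - \mathcal{P}\mathcal{R}^*\mathcal{R}\mathcal{P})$ and bounding the second summand by $\delta_{2\bm{r}}$ via Lemma \ref{recov:lemma:rip_rip}, a short case analysis shows $|1 - \alpha_t| + \alpha_t \delta_{2\bm{r}} \le 2\delta_{2\bm{r}}/(1 - \delta_{2\bm{r}})$, so the first piece is at most $\frac{2\delta_{2\bm{r}}}{1 - \delta_{2\bm{r}}}\| \bm{P}_t \|_F \le \frac{2\delta_{2\bm{r}}}{1 - \delta_{2\bm{r}}}\| \bm{X}_t - \bm{A} \|_F$. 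Using $\| \mathcal{R}^*\mathcal{R} \| \le C$, $\alpha_t \le (1-\delta_{2\bm{r}})^{-1}$, and the curvature estimate for $\| \bm{Q}_t \|_F$, summing the three bounds and reinstating the $(1 + \sqrt{d-1})$ factor reproduces exactly the stated $\beta_t$.

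For the convergence claim, observe that $\beta_t$ is strictly increasing in $\| \bm{X}_t - \bm{A} \|_F$. A direct algebraic rearrangement shows that the condition $\delta_{2\bm{r}} < (3 + 2\sqrt{d-1})^{-1}$ is precisely what makes the right-hand side of the stated initial-condition bound positive, and that this bound is equivalent to $\beta_0 < 1$; an induction then gives $\beta_{t+1} \le \beta_t \le \beta_0 < 1$ and hence linear convergence at rate $\beta_0$. Finally, for the RIP-of-order-$3\bm{r}$ refinement I would revisit the second piece: since $\bm{Q}_t = \mathcal{P}\bm{A} - \bm{A}$ has TT-rank at most $3\bm{r}$ and, for any unit tangent vector $\bm{S}$, both $\bm{S} \pm \bm{Q}_t$ have TT-rank at most $3\bm{r}$ (tangent vectors have rank $\le 2\bm{r}$ while $\bm{A}$ has rank $\bm{r}$), the polarization identity together with RIP of order $3\bm{r}$ gives $|\langle \mathcal{R}\bm{S}, \mathcal{R}\bm{Q}_t \rangle| \le (1 + \delta_{3\bm{r}})\| \bm{Q}_t \|_F$, whence $\| \mathcal{P}\mathcal{R}^*\mathcal{R}\bm{Q}_t \|_F \le (1 + \delta_{3\bm{r}})\| \bm{Q}_t \|_F$ and $C$ may be replaced by $1 + \delta_{3\bm{r}}$.
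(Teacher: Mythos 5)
Your proof is correct and follows essentially the same route as the paper's: quasi-optimality of TT-SVD giving the $(1+\sqrt{d-1})$ prefactor, a split of the error into tangent and normal components, RIP on the tangent space (Lemma \ref{recov:lemma:rip_rip}) with the step size pinned to $[(1+\delta_{2\bm{r}})^{-1},(1-\delta_{2\bm{r}})^{-1}]$, and the curvature bound (Lemma \ref{tt:lemma:curvature_bound}); your three-piece regrouping is just the paper's four-term decomposition with the two tangent-space terms merged, and it produces the identical constant $2\delta_{2\bm{r}}/(1-\delta_{2\bm{r}})$, while your explicit monotonicity-plus-induction argument and the equivalence of the initial condition with $\beta_0<1$ make precise what the paper states tersely. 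The only minor divergence is the $3\bm{r}$ refinement, where you bound $\langle \mathcal{R}\bm{S},\mathcal{R}\bm{Q}_t\rangle$ by a polarization argument on $\bm{S}\pm\bm{Q}_t$ (valid, since these have TT-rank at most $3\bm{r}$), whereas the paper applies Cauchy--Schwarz with the RIP upper bound on both factors together with $\delta_{2\bm{r}}\le\delta_{3\bm{r}}$ --- both yield the replacement of $C$ by $1+\delta_{3\bm{r}}$, and in fact your route could exploit the orthogonality $\langle\bm{S},\bm{Q}_t\rangle_F=0$ to sharpen this factor to $\delta_{3\bm{r}}$.
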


The novelty of our Theorem \ref{recov:theorem:convergence} is that we require the standard RIP \eqref{recov:eq:rip_vanilla} of order $2 \bm{r}$, while an analogous result from \cite{RauhutEtAlTensor2015} (Theorem 3) uses order $3 \bm{r}$. We achieve this by leveraging the upper bound $\| \mathcal{R}^* \mathcal{R} \| \leq C$. In addition, we consider a varying step size (\cite{RauhutEtAlTensor2015} sets $\alpha_t = 1$ for the proof) and provide explicit expressions for the radius of convergence and the convergence rate. We have not seen results similar to Theorem \ref{recov:theorem:convergence} with the standard RIP \eqref{recov:eq:rip_vanilla} of order $2 r$ in the matrix case, either.

\subsubsection{Tensor completion}
Turning to the tensor completion problem, we introduce a collection of multi-indices $\Omega \subset [n_1] \times \ldots \times [n_d]$ and denote by $\rho = |\Omega| / (n_1 \ldots n_d)$ the density of known elements. We define the sampling operator $\mathcal{R}_{\Omega} : \Real^{n_1 \times \ldots \times n_d} \to \Real^{n_1 \times \ldots \times n_d}$ as
\begin{equation*}
    \mathcal{R}_{\Omega} \bm{X} = \sum_{\omega \in \Omega} \bm{X}(\omega) \bm{E}_{\omega}, \quad \omega = (i_1, \ldots, i_d), 
\end{equation*}
where $\bm{E}_\omega = e_{i_1} \circ \ldots \circ e_{i_d}$ are canonical basis tensors. This definition allows $\Omega$ to contain repeated elements so in general $\mathcal{R}_{\Omega}$ is not a projection operator. It is, however, self-adjoint and positive semi-definite. For the ease of presentation, we will use $\mathcal{R} = \sqrt{\mathcal{R}_{\Omega}}$ as the measurement operator to reformulate the RGD \eqref{recov:eq:rgd} for the specific case of tensor completion:
\begin{equation}
\label{complt:eq:rgd}
    \bm{X}_{t+1} = \text{TT-SVD}_{\bm{r}} \left( \bm{X}_t - \alpha_t \bm{Y}_t \right) \in \mathcal{M}_{\bm{r}}, \quad \bm{Y}_t = \mathcal{P}_{\bm{X}_t} [\mathcal{R}_\Omega \bm{X}_t - \mathcal{R}_\Omega \bm{A}] \in T_{\bm{X}_t} \mathcal{M}_{\bm{r}},
\end{equation}
with the step size
\begin{equation*}
\label{complt:eq:step_size}
    \alpha_t = \frac{\| \bm{Y}_t \|_F^2}{\langle \mathcal{R}_\Omega \bm{Y}_t, \bm{Y}_t \rangle_F}.
\end{equation*}

As we discussed previously, the sampling operator cannot satisfy the standard RIP \eqref{recov:eq:rip_vanilla}, so we resort to a weaker (see Lemma~\ref{recov:lemma:rip_rip}) assumption, which is just a verbatim translation of the RIP \eqref{intro:eq:rip} from matrix completion to the multi-dimensional setting:
\begin{equation}
\label{complt:eq:rip}
    \| \mathcal{P}_{T_{\bm{A}} \mathcal{M}_{\bm{r}}} - \rho^{-1} \mathcal{P}_{T_{\bm{A}} \mathcal{M}_{\bm{r}}} \mathcal{R}_{\Omega} \mathcal{P}_{T_{\bm{A}} \mathcal{M}_{\bm{r}}} \| < \varepsilon.
\end{equation}
Armed with this assumption, we can prove a new Theorem \ref{complt:theorem:convergence} on the convergence of the RGD \eqref{complt:eq:rgd} for TT completion.

\begin{theorem}
\label{complt:theorem:convergence}
Let $\bm{A} \in \mathcal{M}_{\bm{r}}$ be a tensor of TT rank $\bm{r}$. Suppose that the sampling operator $\mathcal{R}_\Omega$ satisfies the RIP \eqref{complt:eq:rip} and is bounded $\| \mathcal{R}_{\Omega} \| \leq C$. Then the error on the current step of the RGD \eqref{complt:eq:rgd} is estimated via the previous error
\begin{equation*}
    \| \bm{X}_{t+1} - \bm{A} \|_F \leq \beta_t \| \bm{X}_t - \bm{A} \|_F
\end{equation*}
with a constant
\begin{equation*}
    \beta_t = (1 + \sqrt{d-1}) \left[ \frac{2 \varepsilon_t }{1 - \varepsilon_t} + \left( 1 + \frac{C}{1 - \varepsilon_t} \right) (d-1) \frac{\| \bm{X}_t - \bm{A} \|_F}{\sigma_{\min}(\bm{A})} \right], \quad \varepsilon_t = \varepsilon + \left( 2 + 4 C \rho^{-1}\right) (d-1) \frac{\| \bm{X}_t - \bm{A} \|_F}{\sigma_{\min}(\bm{A})}.
\end{equation*}
If $\varepsilon < (3 + 2 \sqrt{d-1})^{-1}$ and the initial condition $\bm{X}_0 \in \mathcal{M}_{\bm{r}}$ satisfies
\begin{equation*}
    (d-1) \frac{\| \bm{X}_0 - \bm{A} \|_F}{\sigma_{\min}(\bm{A})} < \min\left( \frac{1 - \varepsilon}{2 + 4 C \rho^{-1}}, \left( 5 + C + 8C\rho^{-1} + \frac{2 + 4C\rho^{-1}}{1 + \sqrt{d-1}} - \varepsilon \right)^{-1} \left( \frac{1 - \varepsilon}{1 + \sqrt{d-1}} - 2\varepsilon \right) \right),
\end{equation*}
the iterations of RGD converge linearly to $\bm{A}$ at a rate
\begin{equation*}
    \| \bm{X}_{t+1} - \bm{A} \|_F < \beta_0^{t+1} \| \bm{X}_0 - \bm{A} \|_F, \quad \beta_0 < 1.
\end{equation*}
\end{theorem}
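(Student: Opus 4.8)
The plan is to follow the blueprint of the proof of Theorem~\ref{recov:theorem:convergence}, replacing the global normal-equations operator $\mathcal{R}^*\mathcal{R}$ by the normalized restriction $\rho^{-1}\mathcal{R}_\Omega$ and supplying, at each iterate, the tangent-space RIP that in the recovery setting was handed to us for free by Lemma~\ref{recov:lemma:rip_rip}. First I would strip off the retraction: since $\bm{Y}_t = \mathcal{P}_{\bm{X}_t}\mathcal{R}_\Omega(\bm{X}_t - \bm{A})$, the quasi-optimality of TT-SVD gives $\| \bm{X}_{t+1}-\bm{A} \|_F \le (1+\sqrt{d-1})\,\| (\Id - \alpha_t\mathcal{P}_{\bm{X}_t}\mathcal{R}_\Omega)(\bm{X}_t-\bm{A}) \|_F$, exactly as before. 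Then I would insert $\rho^{-1}$ by hand and split the inner tensor into four pieces organized around the restriction $\rho^{-1}\mathcal{P}_{\bm{X}_t}\mathcal{R}_\Omega\mathcal{P}_{\bm{X}_t}$ that the RIP hypothesis controls:
\begin{align*}
(\Id - \alpha_t \mathcal{P}_{\bm{X}_t} \mathcal{R}_\Omega)(\bm{X}_t - \bm{A}) &= (\Id - \mathcal{P}_{\bm{X}_t})(\bm{X}_t - \bm{A}) + (\mathcal{P}_{\bm{X}_t} - \rho^{-1} \mathcal{P}_{\bm{X}_t} \mathcal{R}_\Omega \mathcal{P}_{\bm{X}_t})(\bm{X}_t - \bm{A}) \\
&\quad + (\rho^{-1} - \alpha_t) \mathcal{P}_{\bm{X}_t} \mathcal{R}_\Omega \mathcal{P}_{\bm{X}_t}(\bm{X}_t - \bm{A}) - \alpha_t \mathcal{P}_{\bm{X}_t} \mathcal{R}_\Omega (\Id - \mathcal{P}_{\bm{X}_t})(\bm{X}_t - \bm{A}).
\end{align*}

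The one genuinely new step is to certify RIP at the moving iterate $\bm{X}_t$ rather than at $\bm{A}$. I would feed the second estimate of the curvature bound, $\| \mathcal{P}_{T_{\bm{A}}\mathcal{M}_{\bm{r}}} - \mathcal{P}_{T_{\bm{X}_t}\mathcal{M}_{\bm{r}}} \| \le 2\| \bm{X}_t-\bm{A} \|_F/\sigma_{\min}(\bm{A})$ of Lemma~\ref{tt:lemma:curvature_bound}, into Lemma~\ref{complt:lemma:rip_rip}; by monotonicity of $E$ this yields RIP on $T_{\bm{X}_t}\mathcal{M}_{\bm{r}}$ with constant $\varepsilon_t = E(2\| \bm{X}_t-\bm{A} \|_F/\sigma_{\min}(\bm{A}))$, precisely the quantity in the statement. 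With $\varepsilon_t$ in hand the four terms go through in parallel with Theorem~\ref{recov:theorem:convergence}: term one is $\le \| \bm{X}_t-\bm{A} \|_F^2/\sigma_{\min}(\bm{A})$ by the first curvature estimate (using $(\Id-\mathcal{P}_{\bm{X}_t})\bm{X}_t = 0$); term two is $\le \varepsilon_t\| \bm{X}_t-\bm{A} \|_F$ directly from the fresh RIP; term three uses that exact line search gives $\rho^{-1}/(1+\varepsilon_t) \le \alpha_t \le \rho^{-1}/(1-\varepsilon_t)$, hence $|\rho^{-1}-\alpha_t| \le \rho^{-1}\varepsilon_t/(1-\varepsilon_t)$, together with $\| \mathcal{P}_{\bm{X}_t}\mathcal{R}_\Omega\mathcal{P}_{\bm{X}_t}(\bm{X}_t-\bm{A}) \|_F \le \rho(1+\varepsilon_t)\| \bm{X}_t-\bm{A} \|_F$ (variational characterization, positive-semidefiniteness of $\mathcal{R}_\Omega$, and RIP); and term four is bounded by $\alpha_t\| \mathcal{R}_\Omega \|\,\| (\Id-\mathcal{P}_{\bm{X}_t})(\bm{X}_t-\bm{A}) \|_F$ via $\| \mathcal{R}_\Omega \|\le C$ and the curvature estimate once more.

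Adding the second and third contributions telescopes to $2\varepsilon_t/(1-\varepsilon_t)$, while the first and fourth combine into the factor $(1 + C/(1-\varepsilon_t))\| \bm{X}_t-\bm{A} \|_F/\sigma_{\min}(\bm{A})$; multiplying by $(1+\sqrt{d-1})$ reproduces the stated $\beta_t$. For the convergence conclusion I would argue by induction on the monotone dependence of $\beta_t$ on the error: $\beta_t$ is increasing in $\varepsilon_t$, which is increasing in $\| \bm{X}_t-\bm{A} \|_F$, so once $\beta_0 < 1$ the error strictly decreases, which can only decrease $\varepsilon_{t+1}$ and hence keep $\beta_{t+1}\le\beta_0$, yielding $\| \bm{X}_{t+1}-\bm{A} \|_F < \beta_0^{t+1}\| \bm{X}_0-\bm{A} \|_F$. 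The threshold $\varepsilon < (3+2\sqrt{d-1})^{-1}$ is exactly the condition making the limiting rate $(1+\sqrt{d-1})\cdot 2\varepsilon/(1-\varepsilon)$ (attained as $\| \bm{X}_t-\bm{A} \|_F\to 0$) strictly below one; solving $\beta_0 < 1$ for $\| \bm{X}_0-\bm{A} \|_F/\sigma_{\min}(\bm{A})$, after substituting $\varepsilon_0 = \varepsilon + 2(1+2C\rho^{-1})\| \bm{X}_0-\bm{A} \|_F/\sigma_{\min}(\bm{A})$ and bounding $1/(1-\varepsilon_0)$ crudely, produces the explicit initial-condition bound in the theorem.

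I expect the main obstacle to be the self-referential character of the estimate. Unlike the recovery case, where $\delta_{2\bm{r}}$ is a fixed constant, here the effective RIP constant $\varepsilon_t$ depends on the current error through $E$, so the per-step factor $\beta_t$ is itself a function of $\| \bm{X}_t-\bm{A} \|_F$. Closing the induction therefore demands checking that initial smallness propagates and keeps $\varepsilon_t$ safely below $(3+2\sqrt{d-1})^{-1}$ for every $t$, and distilling this into a single clean threshold on $\| \bm{X}_0-\bm{A} \|_F/\sigma_{\min}(\bm{A})$ is the most delicate bookkeeping, since the factors $C\rho^{-1}$ entering through $E$ must be carried through all four terms without the argument degenerating.
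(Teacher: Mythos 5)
Your proposal reproduces the paper's own proof essentially step for step: the same quasi-optimality reduction to $(1+\sqrt{d-1})\|(\bm{X}_t-\alpha_t\bm{Y}_t)-\bm{A}\|_F$, the identical four-term splitting around $\rho^{-1}\mathcal{P}_{\bm{X}_t}\mathcal{R}_\Omega\mathcal{P}_{\bm{X}_t}$, the same key mechanism of transporting RIP from $T_{\bm{A}}\mathcal{M}_{\bm{r}}$ to the moving tangent space by feeding the curvature bound of Lemma \ref{tt:lemma:curvature_bound} into Lemma \ref{complt:lemma:rip_rip} to obtain $\varepsilon_t$, the same step-size bracketing $\rho^{-1}/(1+\varepsilon_t)\le\alpha_t\le\rho^{-1}/(1-\varepsilon_t)$, and the same monotonicity argument to close the induction and extract the initial-condition threshold. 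The only caveat --- which you share with the paper, since it is built into the statement being proved --- is that your own term-four estimate $\alpha_t\|\mathcal{R}_\Omega\|\,\|(\Id-\mathcal{P}_{\bm{X}_t})(\bm{X}_t-\bm{A})\|_F$ with $\alpha_t\le\rho^{-1}/(1-\varepsilon_t)$ and $\|\mathcal{R}_\Omega\|\le C$ actually yields the coefficient $C\rho^{-1}/(1-\varepsilon_t)$ rather than the $C/(1-\varepsilon_t)$ written in $\beta_t$, so your bookkeeping inherits exactly the same $\rho^{-1}$ discrepancy present in the published constant and does not introduce any gap beyond what is already in the paper.
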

To the best of our knowledge, local linear convergence of the RGD \eqref{complt:eq:rgd} has not been established before. In \cite{CaiEtAlProvable2021a}, Riemannian TT completion is addressed from the theoretical point of view as well, but the algorithm is different there: an additional trimming procedure is applied on every iteration before TT-SVD to ensure that all the elements of the tensor remain below a certain threshold. This algorithm is proved to locally linearly converge in \cite{CaiEtAlProvable2021a} (Lemmas 5 and 9), but the assumptions are stricter than in our Theorem \ref{complt:theorem:convergence}: in addition to the RIP \eqref{complt:eq:rip} and the bound $\| \mathcal{R}_{\Omega} \| \leq C$ (which are not present in the formulations of the Lemmas, but can be found in the proof of Lemma 9 under the names $\bm{\mathcal{E}}_1$ and $\bm{\mathcal{E}}_2$), the initial condition $\bm{X}_0$ is required to have low \textit{interface coherence} (we will talk about this notion later on). Meanwhile, our Theorem \ref{complt:theorem:convergence} guarantees local linear convergence for \textit{any} initial condition as long as it is sufficiently close to $\bm{A}$.

We can also compare the implications of Theorem \ref{complt:theorem:convergence} for matrix completion with Theorem \ref{intro:theorem:wei_riemann}. Our result guarantees convergence when the RIP \eqref{intro:eq:rip} holds with a larger $\varepsilon$ ($1/5$ against $1/22$) and, as a consequence, when fewer elements of the matrix are known, owing to Theorem \ref{intro:theorem:recht_rip}. 

From the numerical perspective, the trimming step in \cite{CaiEtAlProvable2021a} renders the whole algorithm expensive both in terms of memory requirements and computational complexity, since a full tensor needs to be assembled from its TT representation and then TT-SVD is applied to a full tensor too. It is noted, however, that in numerical experiments the iterations with and without trimming behave in a nearly identical manner. While the algorithm without trimming (which is exactly our RGD \eqref{complt:eq:rgd}) is much more efficient, there is still a question of how to choose the initial point $\bm{X}_0$: the sequential spectral initialization of \cite{CaiEtAlProvable2021a} (Alg. 3) and the possible multi-dimensional extensions of the initialization strategies in \cite{WeiEtAlGuarantees2016} may be provable, but their computational complexities are likely to dwarf the resources needed to carry out the RGD iterations for large tensors. We leave aside the problem of choosing the initial point in this paper, but we believe that a more promising direction that can lead to a provably convergent computationally efficient method is random initialization \cite{ChenEtAlGradient2019, MaEtAlImplicit2020}.

It should be noted that in Theorems \ref{recov:theorem:convergence} and \ref{complt:theorem:convergence}, we implicitly assume that the sequences generated by the RGD always remain on the manifold $\mathcal{M}_{\bm{r}}$; however, though virtually unseen in practice, the TT ranks can become smaller. This phenomenon was studied in the matrix case for a projected line search method on the algebraic variety of matrices with rank not bigger (as opposed to equal) than a certain fixed value \cite{SchneiderUschmajewConvergence2015}.

%%%%%%%%%%%%%%%%%%%%%%%%%%%%%%%%%%%%%%%%%%%%%%%%%%%%%%%%%%%%%%%%%%%%%%%%%%%%%%%%%

\subsection{Recovery guarantees}
The main assumption in Theorem \ref{complt:theorem:convergence} that guarantees local linear convergence of the RGD \eqref{complt:eq:rgd} is that the sampling operator $\mathcal{R}_{\Omega}$ satisfies the RIP \eqref{complt:eq:rip}. Assuming that the indices $\Omega$ are chosen uniformly at random with replacement, we want to derive probabilistic sufficient conditions which ensure that the RIP holds with high probability. In this setting, we can also obtain a new bound on the sampling operator.

\begin{lemma}
\label{guarantee:lemma:repetitions}
Let $\Omega \subset [n_1] \times \ldots \times [n_d]$ be a collection of indices sampled uniformly at random with replacement. Then the norm of the sampling operator is bounded by
\begin{equation*}
    \| \mathcal{R}_{\Omega} \| \leq \frac{d \beta}{w(d)} \log (n), \quad n = \max( n_1, \ldots, n_d),
\end{equation*}
with probability at least $1 - n^{d(1-\beta)}$ for $n \geq 16$ and $\beta > 1$. Here,  $w(d)$ is the principal branch of the Lambert W function, also known as product logarithm.
\end{lemma}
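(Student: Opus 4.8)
The plan is to first observe that, for a fixed realization of $\Omega$, the operator $\mathcal{R}_\Omega$ is diagonal in the canonical basis $\{ \bm{E}_\omega \}$: it multiplies the entry at position $\omega$ by its multiplicity $m_\omega$, the number of times $\omega$ was drawn. Hence $\| \mathcal{R}_\Omega \| = \max_\omega m_\omega$, and the whole statement reduces to a maximum-load (balls-into-bins) estimate where $|\Omega|$ balls are thrown uniformly and independently into the $N = n_1 \cdots n_d$ bins. I would control the fullest bin by a union bound over the $N$ bins of a single-bin tail estimate.

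For a fixed $\omega$, the count $m_\omega$ is $\mathrm{Binomial}(|\Omega|, 1/N)$, and I would use the elementary bound $\Prob\{ m_\omega \ge L \} \le \binom{|\Omega|}{L} N^{-L}$, obtained by a union over the $\binom{|\Omega|}{L}$ subsets of draws that could all land on $\omega$. Since $\binom{|\Omega|}{L} \le |\Omega|^L / L!$, this is at most $\rho^L / L!$ with $\rho = |\Omega|/N$; in the completion regime $\rho \le 1$ it is at most $1/L!$. A union bound over all $N \le n^d$ bins then gives $\Prob\{ \| \mathcal{R}_\Omega \| \ge L \} \le n^d / L!$, so it suffices to pick the integer threshold $L$ with $L! \ge n^{d\beta}$, which pushes the right-hand side below $n^{d(1-\beta)}$.

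The crux will be verifying that $L = \tfrac{d\beta}{w(d)} \log n$ meets $L! \ge n^{d\beta}$, and this is exactly where the Lambert W function enters. Using the Stirling-type lower bound $\log(L!) \ge L(\log L - 1)$, it is enough to show $L(\log L - 1) \ge d\beta \log n$. I would substitute the proposed $L$ and invoke the defining relation $w(d) e^{w(d)} = d$, equivalently $\log d - \log w(d) = w(d)$, which yields the clean simplification $\log L = w(d) + \log\beta + \log\log n$. Feeding this back, the target inequality collapses to $\log\beta + \log\log n - 1 \ge 0$: as $\beta > 1$ forces $\log\beta > 0$, it remains only to secure $\log\log n \ge 1$, i.e. $n \ge e^{e} \approx 15.15$, which is precisely the role of the hypothesis $n \ge 16$. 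Collecting the estimates, with probability at least $1 - n^{d(1-\beta)}$ one has $\max_\omega m_\omega < L$, the claimed bound.

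I expect the one genuinely delicate point to be this \emph{reverse Stirling} computation: rather than bounding a given factorial, one must run the bound backwards to pin down the smallest admissible $L$, and the non-elementary threshold that emerges is exactly $d\beta \log n / w(d)$, with the Lambert W identity doing the work of converting the transcendental condition $L(\log L - 1) \ge d\beta\log n$ into the transparent requirement $\log\log n \ge 1$. The only modeling caveat I would flag is the implicit assumption $\rho \le 1$ (equivalently $|\Omega| \le n_1 \cdots n_d$), natural for completion; without it the per-bin tail retains the factor $\rho^L$ and the threshold $L$ would have to grow with the oversampling.
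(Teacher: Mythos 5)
Your proposal is correct and follows essentially the same route as the paper: both reduce $\| \mathcal{R}_{\Omega} \|$ to the maximum multiplicity in a balls-into-bins experiment, apply a union bound over the $n^d$ entries, and close with the identical Lambert-W verification of $x(\log x - 1) \geq d\beta \log n$ via $w(d)e^{w(d)} = d$ and $n \geq 16 > e^e$ --- the only difference being that the paper gets the per-entry tail from the Chernoff bound $\Prob\{\xi > x\} \leq (\rho/x)^x e^{x-\rho}$ while you count $L$-subsets of draws to get $\rho^L/L! \leq (e\rho/L)^L$, which is the same estimate up to an immaterial factor. Two cosmetic remarks: the paper's step $(\rho/x)^x e^{x-\rho} < (1/x)^x e^x$ rests on the same implicit assumption $\rho \leq 1$ that you rightly flag, and your factorial argument should formally take the integer $L = \lfloor d\beta \log n / w(d) \rfloor + 1$, after which monotonicity of $t \mapsto t(\log t - 1)$ for $t \geq 1$ preserves the bound.
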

\begin{proof}
The norm $\| \mathcal{R}_{\Omega} \|$ is nothing but the maximum number of repetitions in the sample. Consider $|\Omega|$ i.i.d. Bernoulli random variables $\xi_j$ with probability of success $1 / (n_1 \ldots n_d)$ and let $\xi = \sum_{j} \xi_j$. Since all the indices in $\Omega$ are drawn with equal probability with replacement, $\xi$ describes how many times a single fixed entry is sampled. Then the probability of it being sampled more than $k$ times can be upper bounded with the help of the Chernoff bound
\begin{equation*}
    \Prob \left\{ \xi > x \right \} \leq \left( \frac{\rho}{x} \right)^x \exp(x - \rho), \quad \rho = \frac{|\Omega|}{n_1 \ldots n_d}.
\end{equation*}
The union bound over all the entries leads to
\begin{equation*}
    \Prob \left\{ \| \mathcal{R}_{\Omega} \| > x \right\} \leq (n_1 \ldots n_d) \Prob \left\{ \xi > x \right \} \leq n^d \left( \frac{\rho}{x} \right)^x \exp(x - \rho) < n^d \left( \frac{1}{x} \right)^x \exp(x).
\end{equation*}
It remains to substitute $x = d\beta \log(n) / w(d)$ and note that for $n \geq 16 > \exp(e)$,
\begin{equation*}
    w(d) \exp(w(d)) = d \leq \frac{\log(n)}{e} d < \frac{\log(n)}{e} d \beta. \qedhere
\end{equation*}
\end{proof}
Lemma~\ref{guarantee:lemma:repetitions} is a direct multi-dimensional extension of \cite{RechtSimpler2011}, Proposition 5. An analogous result appears in \cite{CaiEtAlProvable2021a} (Lemma~33) with a $d \beta \log(n)$ bound. The bound we prove is tighter, especially for large $d$. Indeed, the Lambert W function behaves as $w(d) = \log(d) - \log(\log(d)) + o(1)$, so the bound grows as $d \log(n) / \log(d)$. A similar asymptotic was mentioned in \cite{RechtSimpler2011}.

Going back to the RIP \eqref{complt:eq:rip}, we want to extend Theorem \ref{intro:theorem:recht_rip} from matrices to tensors in the TT format. To this end, we need to generalize the assumption of bounded coherence \eqref{intro:eq:incoherent} and/or the notion of coherence \eqref{intro:eq:coherence} itself. Every matrix, seen as a tensor, coincides with its unfolding $A = A^{\langle 1 \rangle}$ and has its column and row spaces spanned by the columns of the interface matrices $A_{\leq 1}$ and $A_{\geq 2}$, respectively. The incoherence assumption \eqref{intro:eq:incoherent} can then be written in a way that is easily extended to the multi-dimensional case:
\begin{equation*}
    \mu(A_{\leq 1}) \leq \mu_0, \quad \mu(A_{\geq 2}) \leq \mu_0.
\end{equation*}
We define the \textit{interface coherence} of a tensor $\bm{A}$ as the maximum coherence of its left and right interface matrices:
\begin{equation}
\label{guarantee:eq:interface_coherence}
    \mu_{I}(\bm{A}) = \max\Big( \mu(A_{\leq 1}), \mu(A_{\geq 2}), \ldots, \mu(A_{\leq d-1}), \mu(A_{\geq d}) \Big).
\end{equation}
Recalling the definition of the coherence \eqref{intro:eq:coherence}, we get
\begin{align*}
    \mu(A_{\leq k}) &= \frac{n_1 \ldots n_k}{r_k} \max_{i_1 \in [n_1], \ldots, i_k \in [n_k]} \| P_{\leq k} (e_{i_k} \otimes \ldots \otimes e_{i_1}) \|^2_2, \\
    \mu(A_{\geq k+1}) &= \frac{n_{k+1} \ldots n_d}{r_k} \max_{i_{k+1} \in [n_{k+1}], \ldots, i_d \in [n_d]} \| P_{\geq k+1} (e_{i_{k+1}} \otimes \ldots \otimes e_{i_d}) \|^2_2.
\end{align*}
As we replace the incoherence assumption \eqref{intro:eq:incoherent} with the interface incoherence, we can prove an analog of Theorem \ref{intro:theorem:recht_rip} for tensors with low TT ranks.

\begin{theorem}
\label{guarantee:theorem:rip_interface}
Let $\bm{A} \in \mathcal{M}_{\bm{r}}$ be a tensor of TT rank $\bm{r}$ with bounded interface coherence $\mu_I(\bm{A}) \leq \mu_0$ and let $\Omega \subset [n_1] \times \ldots \times [n_d]$ be a collection of indices sampled uniformly at random with replacement. Then the RIP \eqref{complt:eq:rip}
\begin{equation*}
    \| \mathcal{P}_{T_{\bm{A}} \mathcal{M}_{\bm{r}}} - \rho^{-1} \mathcal{P}_{T_{\bm{A}} \mathcal{M}_{\bm{r}}} \mathcal{R}_{\Omega} \mathcal{P}_{T_{\bm{A}} \mathcal{M}_{\bm{r}}} \| < \varepsilon, \quad \rho = \frac{|\Omega|}{n_1 \ldots n_d},
\end{equation*}
holds with probability at least $1 - 2n^{d(1 - \beta)}$, $n = \max( n_1, \ldots, n_d)$, for all $\beta > 1$ provided that
\begin{equation*}
    |\Omega| \geq \frac{8}{3} \frac{\beta}{\varepsilon^2} \mu_0 \left(n_1 r_1 + \mu_0 \sum_{k = 2}^{d-1} r_{k - 1} n_k r_k + r_{d-1}n_d \right) d \log(n).
\end{equation*}
\end{theorem}

The interface coherence \eqref{guarantee:eq:interface_coherence} can also be found in \cite{CaiEtAlProvable2021a} under the name $\text{Incoh}(\bm{A})$ (to be precise, $\text{Incoh}(\bm{A}) = \sqrt{\mu_I(\bm{A})}$). A very similar result was proved independently there (Lemma 31). The differences are minor: we treat $\varepsilon$ as a parameter and our estimate is more detailed. In the two-dimensional case, Theorem \ref{guarantee:theorem:rip_interface} becomes exactly \cite{RechtSimpler2011}, Theorem 6. 

Theorem \ref{guarantee:theorem:rip_interface}, coupled with Theorem \ref{complt:theorem:convergence} and Lemma \ref{guarantee:lemma:repetitions}, shows that, with high probability, the RGD \eqref{complt:eq:rgd} converges locally to $\bm{A}$ when the number of elements in the sample is of order
\begin{equation*}
    |\Omega| \gtrsim \mu_0^2 d^2 r^2 n \log(n),
\end{equation*}
where $n = \max(n_1, \ldots, n_d)$ and $r = \max(r_1, \ldots, r_{d-1})$.  Every tensor of TT rank $\bm{r}$ is described with $O(dnr^2)$ parameters, so the local recovery is highly probable with $d\log(n)$ oversampling, just as in the matrix case. To compare, the algorithm in \cite{CaiEtAlProvable2021a} (Lemma 5) is proved to converge locally when
\begin{equation*}
    |\Omega| \gtrsim C_d \mu_0^{\frac{d}{2}} r^{\frac{d}{2}} n^{\frac{d}{2}} \log^{d}(n), \quad C_d = C_d(d).
\end{equation*}
The problem, however, is that for a tensor $\bm{A}$ with minimal TT representation $\bm{A} = [\bm{G}_1, \ldots, \bm{G}_d]$, the interface matrices are intimately interconnected (see Eq.~\eqref{eq:tt:interface_rec}),
\begin{equation*}
    A_{\leq k} = (I_{n_k} \otimes A_{\leq k-1}) G_k^L,
\end{equation*}
and so their coherences are also far from being independent. Moreover, $\mu(A_{\leq d-1})$ and $\mu(A_{\geq 2})$ can become as high as $n^{d-1}/r$ and hence the value of the interface coherence $\mu_{I}(\bm{A})$ is a source of potential problems for the sample complexity.

In defining interface coherence, we were inspired by a particular way to express incoherence for matrices, via interface matrices. Here we draw a different analogy. Let $A = [\bm{G}_1, \bm{G}_2]$ be a minimal representation of a matrix. Since their left and right unfoldings satisfy (see Eq.~\eqref{eq:tt:interface_rec})
\begin{equation*}
    A = G_1^L (G_2^R)^T,
\end{equation*}
we can rewrite the incoherence assumption \eqref{intro:eq:incoherent} as
\begin{equation*}
    \mu(G_1^L) \leq \mu_0, \quad \mu((G_2^R)^T) \leq \mu_0.
\end{equation*}
We will try to extend the notion of coherence to tensors through the TT cores.

Let $\bm{U} \in \Real^{r \times n \times s}$ be a three-dimensional left-orthogonal tensor. Denote by $U^{(i)} \in \Real^{r \times s}$ the $i$-th subblock of its left unfolding:
\begin{equation*}
    U^L =
    \begin{bmatrix}
        U^{(1)} \\
        \vdots \\
        U^{(n)}
    \end{bmatrix} \in \Real^{rn \times s}.
\end{equation*}
We define the \textit{left coherence of a three-dimensional left-orthogonal tensor} as
\begin{equation}
\label{guarantee:eq:left_coherence}
    \mu_{L}(\bm{U}) = \frac{rn}{s} \max_{i \in [n]} \| U^{(i)} \|_2^2.
\end{equation}
When $r = 1$, the tensor $\bm{U}$ becomes a matrix, the subblocks $U^{(i)}$ become rows, their spectral norm equals to the Euclidean norm, and we recognize that the left coherence is just the coherence of a matrix with orthonormal columns.

Likewise, let $\bm{V} \in \Real^{r \times n \times s}$ be a right-orthogonal tensor and let $(V^{(i)})^T \in \Real^{r \times s}$ be the $i$-th subblock of the right unfolding:
\begin{equation*}
    V^R =
    \begin{bmatrix}
        (V^{(1)})^T & \hdots & (V^{(n)})^T
    \end{bmatrix} \in \Real^{r \times ns}.
\end{equation*}
We define the \textit{right coherence of a three-dimensional right-orthogonal tensor} $\bm{V}$ as
\begin{equation}
\label{guarantee:eq:right_coherence}
    \mu_R(\bm{V}) = \frac{sn}{r} \max_{i \in [n]} \| V^{(i)} \|_2^2.
\end{equation}
In complete analogy, the right coherence of a three-dimensional right-orthogonal tensor becomes the coherence of a (transposed) matrix with orthonormal rows when $s = 1$. Note that while we defined the coherence \eqref{intro:eq:coherence} for arbitrary matrices, the notions of left and right coherences require orthogonality.

Now, consider a $d$-dimensional tensor $\bm{X}$ in a minimal left-orthogonal TT representation $\bm{X} = [\bm{U}_1, \ldots, \bm{U}_{d-1}, \bm{G}_d]$. Since the first $d-1$ TT cores are left-orthogonal, we can compute their left coherences $\{ \mu_L(\bm{U}_k) \}_{k \in [d-1]}$. But do these values characterize the specific TT representation of $\bm{X}$ or the tensor itself? What happens to them when we choose a different minimal left-orthogonal TT representation? The following Lemma~\ref{guarantee:lemma:core_coherence} shows that $\{ \mu_L(\bm{U}_k) \}_{k \in [d-1]}$ do not change.
\begin{lemma}
\label{guarantee:lemma:core_coherence}
Let $\bm{X} = [\bm{U}_1, \ldots, \bm{U}_{d-1}, \bm{G}_d] = [\bm{\tilde{U}}_1, \ldots, \bm{\tilde{U}}_{d-1}, \bm{\tilde{G}}_d]$ be two minimal left-orthogonal TT representations. Then the left coherences of their TT cores coincide:
\begin{equation*}
    \mu_L(\bm{U}_k) = \mu_L(\bm{\tilde{U}}_k), \quad k \in [d-1].
\end{equation*}
The same is true for any two right-orthogonal TT representations and the right coherences of their TT cores.
\end{lemma}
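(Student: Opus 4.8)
The plan is to exploit the fact that the non-uniqueness of minimal left-orthogonal TT-representations is rigid: it reduces to orthogonal changes of basis on the bond (rank) indices, and the spectral norm of each core block is insensitive to such transformations, which is exactly why the left coherence uses $\| \cdot \|$ of the subblock $U^{(i)}$.

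First I would show that the column space of each left interface matrix is intrinsic to $\bm{X}$. For a left-orthogonal representation the cores satisfy $(U_i^L)^T U_i^L = I_{r_i}$, and an easy induction using the recursion \eqref{eq:tt:proj_recursive} shows that each $U_{\leq k}$ has orthonormal columns. Since $X^{\langle k \rangle} = U_{\leq k} X_{\geq k+1}^T$ with $X_{\geq k+1}$ of full column rank $r_k$ by minimality, we obtain $\text{col}(U_{\leq k}) = \text{col}(X^{\langle k \rangle})$, a subspace determined by $\bm{X}$ alone. The same holds for $\tilde{U}_{\leq k}$, so both $U_{\leq k}$ and $\tilde{U}_{\leq k}$ are orthonormal bases of the same $r_k$-dimensional space, whence $\tilde{U}_{\leq k} = U_{\leq k} Q_k$ for some orthogonal matrix $Q_k \in \Real^{r_k \times r_k}$, with the convention $Q_0 = I_{r_0}$.

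Second I would propagate this gauge freedom down to the individual cores. Substituting $\tilde{U}_{\leq k} = U_{\leq k} Q_k$ and $\tilde{U}_{\leq k-1} = U_{\leq k-1} Q_{k-1}$ into the recursion $\tilde{U}_{\leq k} = (I_{n_k} \otimes \tilde{U}_{\leq k-1}) \tilde{U}_k^L$ and cancelling the factor $(I_{n_k} \otimes U_{\leq k-1})$ by left multiplication with its transpose yields $\tilde{U}_k^L = (I_{n_k} \otimes Q_{k-1}^T) U_k^L Q_k$. Reading off the block structure of the left unfolding, the Kronecker factor acts block-diagonally, so this is precisely $\tilde{U}_k^{(i)} = Q_{k-1}^T U_k^{(i)} Q_k$ for every $i \in [n_k]$.

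Finally, since the spectral norm is invariant under multiplication by orthogonal matrices on either side, $\| \tilde{U}_k^{(i)} \| = \| U_k^{(i)} \|$ for all $i$, and the definition of left coherence immediately gives $\mu_L(\bm{\tilde{U}}_k) = \mu_L(\bm{U}_k)$. The right-orthogonal statement follows by the mirror-image argument applied to the right interface matrices $V_{\geq k+1}$ and the right unfoldings $V_k^R$. The step I expect to require the most care is the cancellation in the third paragraph: it relies on $(I_{n_k} \otimes U_{\leq k-1})$ having orthonormal columns, which is exactly where left-orthogonality of the representation enters, and it is also what forces the gauge matrices $Q_k$ to be orthogonal rather than merely invertible, thereby preserving the spectral norms.
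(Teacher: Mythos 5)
Your proof is correct and takes essentially the same approach as the paper's: both identify the gauge freedom between minimal left-orthogonal TT-representations as orthogonal matrices $Q_k$ acting on the bond indices, and conclude via the invariance of the spectral norm of the subblocks under orthogonal multiplication. The only difference is organizational --- the paper establishes the $Q_k$ by an inductive sweep that passes each gauge matrix into the next core, while you obtain all $Q_k$ at once from the intrinsic characterization $\mathrm{col}(U_{\leq k}) = \mathrm{col}(X^{\langle k \rangle})$ (using minimality for full column rank of $X_{\geq k+1}$) and then cancel $(I_{n_k} \otimes U_{\leq k-1})$ exactly as the paper does.
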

\begin{proof}
We carry out the proof for the left coherences. Consider the column span of the first interface matrix $X_{\leq 1}$. It is spanned by two orthonormal bases $U_1^L$ and $\tilde{U}_1^L$, so there exists an orthogonal matrix $Q_1 \in \Real^{r_1 \times r_1}$ such that $\tilde{U}_1^L = U_1^L Q_1$ and
\begin{equation*}
    \mu_L(\bm{\tilde{U}}_1) = \frac{r_0 n_1}{r_1} \max_{i \in [n_1]} \| \tilde{U}^{(i)} \|_2^2 = \frac{r_0 n_1}{r_1} \max_{i \in [n_1]} \| U^{(i)} Q_1 \|_2^2 = \frac{r_0 n_1}{r_1} \max_{i \in [n_1]} \| U^{(i)} \|_2^2 = \mu_L(\bm{U}_1).
\end{equation*}
By factoring $Q_1$ out of the first TT core and attaching it to the second TT core as
\begin{equation*}
    \tilde{U}_1^L \mapsto U_1^L, \quad \tilde{U}_2^L \mapsto \hat{U}_2^L = (I_{n_2} \otimes Q_1)\tilde{U}_2^L = 
    \begin{bmatrix}
        Q_1 \tilde{U}_2^{(1)} \\
        \vdots \\
        Q_1 \tilde{U}_2^{(n_2)}
    \end{bmatrix}
\end{equation*}
we get a new minimal left-orthogonal TT representation with $\mu_L( \bm{\hat{U}}_2 ) = \mu_L( \bm{\tilde{U}}_2 )$:
\begin{equation*}
    \bm{X} = [\bm{U}_1, \bm{\hat{U}}_2, \bm{\tilde{U}}_3, \ldots, \bm{\tilde{U}}_{d-1}, \bm{\tilde{G}}_d].
\end{equation*}
Now suppose we have a minimal left-orthogonal TT representation with $\mu_L( \bm{\hat{U}}_{k} ) = \mu_L( \bm{\tilde{U}}_{k} )$:
\begin{equation*}
    \bm{X} = [\bm{U}_1, \ldots, \bm{U}_{k-1}, \bm{\hat{U}}_{k}, \bm{\tilde{U}}_{k+1}, \ldots, \bm{\tilde{U}}_{d-1}, \bm{\tilde{G}}_d].
\end{equation*}
As the recursive formulas for the interface matrices \eqref{eq:tt:interface_rec} show, the column space of $X_{\leq k}$ is spanned by two orthonormal bases that are related via an orthogonal matrix $Q_k \in \Real^{r_k \times r_k}$ so that
\begin{equation*}
    (I_{n_k} \otimes U_{\leq k-1}) U_k^L = (I_{n_k} \otimes U_{\leq k-1}) \hat{U}_k^L Q_k.
\end{equation*}
Since $U_{\leq k-1}^T U_{\leq k-1} = I_{r_{k-1}}$ we get $U_k^L = \hat{U}_k^L Q_k$ and $\mu_L(\bm{U}_k) = \mu_L( \bm{\hat{U}}_k ) = \mu_L( \bm{\tilde{U}}_k )$. Attaching $Q_k$ to the next TT core gives a new minimal left-orthogonal TT representation
\begin{equation*}
    \bm{X} = [\bm{U}_1, \ldots, \bm{U}_{k}, \bm{\hat{U}}_{k+1}, \bm{\tilde{U}}_{k+2}, \ldots, \bm{\tilde{U}}_{d-1}, \bm{\tilde{G}}_d]
\end{equation*}
with $\mu_L( \bm{\hat{U}}_{k+1} ) = \mu_L( \bm{\tilde{U}}_{k+1} )$ if $k \leq d-2$ and $\bm{\hat{G}}_d = \bm{G}_d$ if $k = d-1$.
\end{proof}

Since $\{ \mu_L(\bm{U}_k) \}_{k \in [d-1]}$ are a property of the tensor, rather than its TT representation, this motivates us to define a new notion of \textit{core coherence of a tensor}.

\begin{definition}\label{guarantee:def:core_coherence}
Let $\bm{X} \in \Real^{n_1 \times \ldots \times n_d}$ be a tensor with minimal left- and right-orthogonal TT representations
\begin{equation*}
    \bm{X} = [\bm{U}_1, \ldots, \bm{U}_{d-1}, \bm{G}_d] = [\bm{G}_1, \bm{V}_2, \ldots, \bm{V}_d].
\end{equation*}
The \textit{$k$-th left core coherence $\mu_L^{(k)}(\bm{X})$ of $\bm{X}$} is defined as the left coherence \eqref{guarantee:eq:left_coherence} of the $k$-th TT core of its minimal left-orthogonal TT representation:
\begin{equation*}
    \mu_L^{(k)}(\bm{X}) = \mu_L(\bm{U}_k), \quad k \in [d-1].
\end{equation*}
The \textit{$(k+1)$-th right core coherence $\mu_R^{(k+1)}(\bm{X})$ of $\bm{X}$} is defined as the right coherence \eqref{guarantee:eq:right_coherence} of the $(k+1)$-th TT core of its minimal right-orthogonal TT representation:
\begin{equation*}
    \mu_R^{(k+1)}(\bm{X}) = \mu_R(\bm{V}_{k+1}), \quad k \in [d-1].
\end{equation*}
The \textit{core coherence $\mu_C(\bm{X})$ of $\bm{X}$} is defined as the maximum of its left and right core coherences:
\begin{equation}
\label{guarantee:eq:core_coherence}
    \mu_C(\bm{X}) = \max \Big( \mu_L^{(1)}(\bm{X}), \ldots, \mu_L^{(d-1)}(\bm{X}), \mu_R^{(2)}(\bm{X}), \ldots, \mu_R^{(d)}(\bm{X}) \Big).
\end{equation}
\end{definition}

When $X$ is a matrix, $\mu_L^{(1)}(X)$ is the coherence \eqref{intro:eq:coherence} of its column space and $\mu_R^{(2)}(X)$ is the coherence of its row space. The core coherence $\mu_C(X)$ and the interface coherence $\mu_I(X)$ coincide for $d = 2$ as well, but are very different for $d > 2$. The following Lemma~\ref{guarantee:lemma:core2interface} shows the relationship between the two in the multi-dimensional case.
\begin{lemma}
\label{guarantee:lemma:core2interface}
Let $\bm{A} \in \mathcal{M}_{\bm{r}}$ be a tensor of TT rank $\bm{r}$ with bounded core coherence $\mu_C(\bm{A}) \leq \mu_1$  Then the coherences of its left and right interface matrices are estimated as
\begin{equation*}
    \mu(A_{\leq k}) \leq \mu_1^k, \quad \mu(A_{\geq k+1}) \leq \mu_1^{d - k}, \quad k \in [d-1].
\end{equation*}
\end{lemma}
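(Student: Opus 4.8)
The plan is to prove the two bounds separately by induction on the length of the interface, exploiting the recursive factorization of the interface projections together with the block structure of the left/right core unfoldings. I will present the argument for the left interface $A_{\leq k}$; the bound for $A_{\geq k+1}$ follows by the mirror-image argument applied to a minimal right-orthogonal representation. Fix a minimal left-orthogonal representation $\bm{A} = [\bm{U}_1, \ldots, \bm{U}_{d-1}, \bm{G}_d]$, which by Lemma \ref{guarantee:lemma:core_coherence} realizes the core coherences $\mu_L(\bm{U}_j) = \mu_L^{(j)}(\bm{A}) \le \mu_1$ for $j \in [d-1]$. Since each $\bm{U}_j$ is left-orthogonal, the recursion \eqref{eq:tt:proj_recursive} shows inductively that $U_{\leq k}$ has orthonormal columns, so $P_{\leq k} = U_{\leq k} U_{\leq k}^T$ and $\| P_{\leq k} v \|_2 = \| U_{\leq k}^T v \|_2$ for every $v$. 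Hence in the coherence formula it suffices to bound $\| U_{\leq k}^T v_k \|_2$ with $v_k = e_{i_k} \otimes \cdots \otimes e_{i_1}$.

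The key step is a recursion for this quantity. Writing $v_k = e_{i_k} \otimes v_{k-1}$ and using $U_{\leq k} = (I_{n_k} \otimes U_{\leq k-1}) U_k^L$ together with the mixed-product rule $(I_{n_k} \otimes U_{\leq k-1}^T)(e_{i_k} \otimes w) = e_{i_k} \otimes (U_{\leq k-1}^T w)$, I expect to obtain $U_{\leq k}^T v_k = (U_k^{(i_k)})^T U_{\leq k-1}^T v_{k-1}$, where $U_k^{(i_k)} \in \Real^{r_{k-1} \times r_k}$ is the $i_k$-th subblock of $U_k^L$; the crucial observation is that $e_{i_k} \otimes w$ is supported on exactly the $i_k$-th row block of $U_k^L$, so that left-multiplication by $(U_k^L)^T$ selects $(U_k^{(i_k)})^T$. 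Submultiplicativity of the spectral norm then gives $\| U_{\leq k}^T v_k \|_2^2 \le \| U_k^{(i_k)} \|^2 \, \| U_{\leq k-1}^T v_{k-1} \|_2^2$, and unrolling the recursion down to the base case $U_{\leq 1}^T v_1 = (U_1^{(i_1)})^T$ yields $\| U_{\leq k}^T v_k \|_2^2 \le \prod_{j=1}^k \| U_j^{(i_j)} \|^2$.

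Substituting the coherence bound $\| U_j^{(i_j)} \|^2 \le \tfrac{r_j}{r_{j-1} n_j}\mu_1$, which merely rearranges the definition of $\mu_L$, the product telescopes to $\mu_1^k \, r_k / (n_1 \cdots n_k)$ since $r_0 = 1$. Plugging this into $\mu(A_{\leq k}) = \tfrac{n_1 \cdots n_k}{r_k}\max_{i_1, \ldots, i_k}\| U_{\leq k}^T v_k \|_2^2$ cancels the prefactor and leaves $\mu(A_{\leq k}) \le \mu_1^k$. The right-interface bound is entirely analogous: using $V_{\geq k+1} = (V_{\geq k+2} \otimes I_{n_{k+1}})(V_{k+1}^R)^T$ and the block partition of $V_{k+1}^R$ I would derive $V_{\geq k+1}^T u = V_{k+1}^{(i_{k+1})} V_{\geq k+2}^T u'$, chain the bound over the $d-k$ cores $\bm{V}_{k+1}, \ldots, \bm{V}_d$, apply $\| V_j^{(i_j)} \|^2 \le \tfrac{r_{j-1}}{r_j n_j}\mu_1$, and telescope (now using $r_d = 1$) to obtain $\mu(A_{\geq k+1}) \le \mu_1^{d-k}$.

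I expect the only delicate point of the whole proof to be the index bookkeeping in the Kronecker products, namely verifying that $e_{i_k} \otimes w$ really lands in the intended row block of $U_k^L$ (and symmetrically on the right), so that the block-selection identity $(U_k^L)^T(e_{i_k} \otimes w) = (U_k^{(i_k)})^T w$ holds exactly. Once this identity is in place, the remaining steps are routine spectral-norm estimates and a telescoping product, and the role of core coherence is simply to furnish a uniform per-core bound that accumulates multiplicatively into the stated powers $\mu_1^k$ and $\mu_1^{d-k}$.
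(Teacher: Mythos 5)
Your proposal is correct and follows essentially the same route as the paper's proof: both exploit the recursion $U_{\leq k} = (I_{n_k} \otimes U_{\leq k-1})U_k^L$ to obtain the identity $U_{\leq k}^T(e_{i_k} \otimes \cdots \otimes e_{i_1}) = \bigl( U_1^{(i_1)} \cdots U_k^{(i_k)} \bigr)^T$, then apply spectral-norm submultiplicativity and the per-core bound $\| U_j^{(i_j)} \|^2 \leq \tfrac{r_j}{r_{j-1} n_j}\mu_1$, telescoping to $\tfrac{r_k}{n_1 \cdots n_k}\mu_1^k$ (and mirroring for the right interfaces). Your explicit inductive unrolling and the block-selection identity you flag as the delicate point are exactly the steps the paper states directly, so the two arguments coincide.
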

\begin{proof}
Consider a minimal left-orthogonal TT representation $\bm{A} = [\bm{U}_1, \ldots, \bm{U}_{d-1}, \bm{G}_d]$. The projection onto the column space of an interface matrix $P_{\leq k} = U_{\leq k} U_{\leq k}^T$ can be written, with the help of the recursive formulas \eqref{eq:tt:interface_rec}, as
\begin{equation*}
    U_{\leq 1} = U_1^L, \quad U_{\leq k} = (I_{n_k} \otimes U_{\leq k-1})U_{k}^L.
\end{equation*}
It follows that
\begin{equation*}
    U_{\leq k}^T(e_{i_k} \otimes \ldots \otimes e_{i_1}) = \left( U_{1}^{(i_1)} U_{2}^{(i_2)} \ldots U_{k}^{(i_k)} \right)^T \in \Real^{r_k}
\end{equation*}
and
\begin{align*}
    \| P_{\leq k} (e_{i_k} \otimes \ldots \otimes e_{i_1}) \|^2_2 &= \left\| \left( U_{1}^{(i_1)} U_{2}^{(i_2)} \ldots U_{k}^{(i_k)} \right)^T \right\|_2^2 \leq \| U_{1}^{(i_1)} \|_2^2 \ldots \| U_{k}^{(i_k)} \|_2^2\\
    &\leq \frac{r_1}{n_1} \frac{r_2}{r_1 n_2} \ldots \frac{r_k}{r_{k-1}n_k} \mu_1^k = \frac{r_k}{n_1 \ldots n_k} \mu_1^k.
\end{align*}
The proof is the same for the right interface matrices.
\end{proof}

We propose to replace the interface incoherence assumption in Theorem~\ref{guarantee:theorem:rip_interface} with a new core incoherence one, which leads to Theorem~\ref{guarantee:theorem:rip_core}.

\begin{theorem}\label{guarantee:theorem:rip_core}
Let $\bm{A} \in \mathcal{M}_{\bm{r}}$ be a tensor of TT rank $\bm{r}$ with bounded core coherence $\mu_C(\bm{A}) \leq \mu_1$ and let $\Omega \subset [n_1] \times \ldots \times [n_d]$ be a collection of indices sampled uniformly at random with replacement. Then the RIP \eqref{complt:eq:rip}
\begin{equation*}
    \| \mathcal{P}_{T_{\bm{A}} \mathcal{M}_{\bm{r}}} - \rho^{-1} \mathcal{P}_{T_{\bm{A}} \mathcal{M}_{\bm{r}}} \mathcal{R}_{\Omega} \mathcal{P}_{T_{\bm{A}} \mathcal{M}_{\bm{r}}} \| < \varepsilon, \quad \rho = \frac{|\Omega|}{n_1 \ldots n_d},
\end{equation*}
holds with probability at least $1 - 2n^{d(1 - \beta)}$, $n = \max( n_1, \ldots, n_d )$, for all $\beta > 1$ provided that
\begin{equation*}
    |\Omega| \geq \frac{8}{3} \frac{\beta}{\varepsilon^2} \mu_1^{d-1} \left( \sum_{k = 1}^{d} r_{k - 1} n_k r_k \right) d \log(n).
\end{equation*}
\end{theorem}

Lemma \ref{guarantee:lemma:core2interface} shows that, in the worst case, the interface coherence \eqref{guarantee:eq:interface_coherence} can be bounded by $\mu_1^{d-1}$ and, consequently, Theorem \ref{guarantee:theorem:rip_interface} (and \cite{CaiEtAlProvable2021a}, Lemma 31) gives the sample complexity of
\begin{equation*}
    |\Omega| \gtrsim \mu_1^{2d-2} d^2 r^2 n \log(n),
\end{equation*}
where $n = \max(n_1, \ldots, n_d)$ and $r = \max(r_1, \ldots, r_{d-1})$. Once we use the core coherence \eqref{guarantee:eq:core_coherence} directly, Theorem \ref{guarantee:theorem:rip_core} allows us to improve the estimate $\mu_1^{d-1}$ times:
\begin{equation*}
    |\Omega| \gtrsim \mu_1^{d-1} d^2 r^2 n \log(n).
\end{equation*}
Given this many elements of a tensor, Theorem \ref{complt:theorem:convergence} ensures, with high probability, that the RGD \eqref{complt:eq:rgd} converges locally to $\bm{A}$. When $d = 2$, Theorems \ref{guarantee:theorem:rip_interface} and \ref{guarantee:theorem:rip_core} are equivalent and repeat \cite{RechtSimpler2011} (Theorem 6).

%%%%%%%%%%%%%%%%%%%%%%%%%%%%%%%%%%%%%%%%%%%%%%%%%%%%%%%%%%%%%%%%%%%%%%%%%%%%%%%%%

\subsection{Tensor train completion with auxiliary subspace information}
We would also like to show how the core coherence \eqref{guarantee:eq:core_coherence} can be used in a different setting. As an example, we choose tensor completion with auxiliary subspace information. In this scenario, in addition to the elements of the tensor $\mathcal{R}_{\Omega} \bm{A}$, we know that the mode-$k$ fiber spans of $\bm{A}$ belong to particular low-dimensional subspaces. Such formulations appear, for example, in multi-label learning \cite{XuEtAlSpeedup2013} and bioinformatics \cite{NatarajanDhillonInductive2014, ChenEtAlPredicting2018}.

Namely, let matrices $Q_k \in \Real^{n_k \times m_k}$ have orthonormal columns that span the subspaces in question. If $m_k = n_k$, no extra information is given about the mode-$k$ fibers. The unknown tensor $\bm{A}$ can then be represented as
\begin{equation}\label{side:eq:subspaces}
    \bm{A} = \bm{B} \times_1 Q_1 \times_2 \ldots \times_d Q_d.
\end{equation}
This is a Tucker decomposition of $\bm{A}$ with the Tucker core $\bm{B} \in \Real^{m_1 \times \ldots \times m_d}$ and Tucker factors $Q_k$. Since $\bm{A} \in \mathcal{M}_{\bm{r}}$, we can also write its minimal TT representation $\bm{A} = [\bm{G}_1, \bm{G}_2, \ldots, \bm{G}_d]$, and it follows that $\bm{B}$ admits a minimal TT representation
\begin{equation*}
    \bm{B} = [\bm{S}_1, \bm{S}_2, \ldots, \bm{S}_d]
\end{equation*}
with TT cores $\bm{S}_k = \bm{G}_k \times_2 Q_k^T$ and the same TT ranks $\bm{r}$. Due to orthogonality, we also have $\bm{G}_k = \bm{S}_k \times_2 Q_k$.

There are then two ways to look at TT completion with subspace information. First, it is a usual TT completion problem for $\bm{A}$, where we know some of its elements $\mathcal{R}_{\Omega} \bm{A}$ and impose an additional constraint $\bm{G}_k = \bm{G}_k \times_2 Q_k Q_k^T$ on the TT cores. Second, we can treat it as a TT recovery problem for the small tensor $\bm{B}$ with a special measurement operator
\begin{equation*}
    \mathcal{R} \bm{B} = \mathcal{R}_{\Omega} \left( \bm{B} \times_1 Q_1 \times_2 \ldots \times_d Q_d \right).
\end{equation*}

Whatever the preferred point of view, the number of parameters that describe $\bm{A}$ is $O(d m r^2)$, where $m = \max(m_1, \ldots, m_d)$ and $r = \max(r_1, \ldots, r_{d-1})$. Therefore, it is reasonable to expect that the required number of known elements $|\Omega|$ should be reduced in the presence of auxiliary subspace information. And we prove the corresponding Theorem. Informally, it states that if $\bm{A}$ has bounded core coherence $\mu_C(\bm{A}) \leq \mu_1$ and the auxiliary subspaces have bounded coherences $\mu(Q_k) \leq \mu_2$, then, with high probability, a modified RGD converges locally to $\bm{A}$ when
\begin{equation*}
    |\Omega| \gtrsim \mu_1^{d-1} \mu_2 d^2 r^2 m \log(m).
\end{equation*}
This is the first theoretical result on the sample complexity of TT completion with auxiliary subspace information. We leave the more detailed and rigorous discussion for Appendix~\ref{section:side}.
\section{Proofs of main results}\label{section:proofs}
\subsection{Curvature bound}
To describe the tangent spaces to $\mathcal{M}_{\bm{r}}$, consider minimal left- and right-orthogonal TT representations of $\bm{X} \in \mathcal{M}_{\bm{r}}$ denoted by
\begin{equation*}
    \bm{X} = [\bm{U}_1, \ldots, \bm{U}_{d-1}, \bm{G}_d] = [\bm{G}_1, \bm{V}_2, \ldots, \bm{V}_d].
\end{equation*}
Every tangent vector $\bm{Y} \in T_{\bm{X}} \mathcal{M}_{\bm{r}}$ can be uniquely represented as a sum $\bm{Y} = \sum_{k=1}^d \bm{Y}_k$ with non-minimal TT representations \cite{SteinlechnerRiemannian2016}
\begin{equation*}
    \bm{Y}_k = [\bm{U}_1, \ldots, \bm{U}_{k-1}, \bm{\Upsilon}_k, \bm{V}_{k+1}, \ldots, \bm{V}_d],
\end{equation*}
where for $k \in [d-1]$ the TT cores $\bm{\Upsilon}_k \in \Real^{r_{k-1} \times n_k \times r_k}$ satisfy the gauge conditions for the left unfoldings
\begin{equation*}
    \left(U_k^L\right)^T \Upsilon_k^L = 0 \in \Real^{r_k \times r_k}.
\end{equation*}
The last TT core $\bm{\Upsilon}_d$ does not have a gauge condition. On introducing the subspaces 
\begin{gather*}
    T_k = \left\{ [\bm{U}_1, \ldots, \bm{U}_{k-1}, \bm{\Upsilon}_k, \bm{V}_{k+1}, \ldots, \bm{V}_d]~:~\bm{\Upsilon}_k \in \Real^{r_{k-1} \times n_k \times r_k},~ \left(U_k^L\right)^T \Upsilon_k^L = 0 \right\},\\
    T_d = \left\{ [\bm{U}_1, \ldots, \bm{U}_{d-1}, \bm{\Upsilon}_d]~:~\bm{\Upsilon}_d \in \Real^{r_{d-1} \times n_d \times r_d} \right\}
\end{gather*}
we can decompose the tangent space $T_{\bm{X}} \mathcal{M}_{\bm{r}}$ into a direct orthogonal sum
\begin{equation}\label{eq:tt:tangent_space}
    T_{\bm{X}} \mathcal{M}_{\bm{r}} = T_1 \oplus \ldots \oplus T_d.
\end{equation}
A useful fact that is derived by simple inspection is that all tensors in the tangent space $T_{\bm{X}} \mathcal{M}_{\bm{r}}$ have TT ranks that are at most $2\bm{r}$. It suffices to see that a tangent vector $\bm{Y} = \sum_{k = 1}^d \bm{Y}_k$ admits the following, non-minimal, TT representation
\begin{equation*}
    \bm{Y} = \sum_{k = 1}^d \bm{Y}_k = \left[ \begin{bmatrix} \bm{\Upsilon}_1 & \bm{U}_1 \end{bmatrix}, \begin{bmatrix} \bm{V}_2 & \bm{0} \\ \bm{\Upsilon}_2 & \bm{U}_2 \end{bmatrix}, \ldots, \begin{bmatrix} \bm{V}_{d-1} & \bm{0} \\ \bm{\Upsilon}_{d-1} & \bm{U}_{d-1} \end{bmatrix}, \begin{bmatrix} \bm{V}_d \\ \bm{\Upsilon}_d \end{bmatrix} \right],
\end{equation*}
we use block notation for the TT cores
\begin{equation*}
    \begin{bmatrix} \bm{\Upsilon}_1 & \bm{U}_1 \end{bmatrix} \in \Real^{r_0 \times n_1 \times 2r_1}, \quad \begin{bmatrix} \bm{V}_k & \bm{0} \\ \bm{\Upsilon}_k & \bm{U}_k \end{bmatrix} \in \Real^{2r_{k-1} \times n_k \times 2r_k}, \quad \begin{bmatrix} \bm{V}_d \\ \bm{\Upsilon}_d \end{bmatrix} \in \Real^{2 r_{d-1} \times n_d \times r_d}.
\end{equation*}

The formula for the orthogonal projection onto the tangent space $T_{\bm{X}} \mathcal{M}_{\bm{r}}$ was derived in \cite{LubichEtAlTime2015}. To introduce it, we need to define the tensorization operation that reverts unfoldings to tensors
\begin{equation*}
    \bm{X} = \text{ten}_k( X^{\langle k \rangle} ).
\end{equation*}
Consider the interface matrices $X_{\leq k}$ and $X_{\geq k + 1}$ for $k \in [d-1]$. Let 
\begin{equation*}
    P_{\leq k} = U_{\leq k} U_{\leq k}^T \in \Real^{(n_1 \ldots n_k) \times (n_1 \ldots n_k)}, \quad P_{\geq k+1} = V_{\geq k+1} V_{\geq k+1}^T \in \Real^{(n_{k+1} \ldots n_d) \times (n_{k+1} \ldots n_d)}
\end{equation*}
be the orthogonal projection onto their column spans. Owing to \eqref{eq:tt:interface_rec}, we can write them down recursively as 
\begin{equation*}\label{eq:tt:proj_recursive}
\begin{split}
    U_{\leq 1} &= U_1^L, \quad U_{\leq k} = (I_{n_k} \otimes U_{\leq k-1})U_{k}^L \in \Real^{(n_1 \ldots n_k) \times r_k},\\
    V_{\geq d} &= (V_d^R)^T, \quad V_{\geq k+1} = (V_{\geq k+2} \otimes I_{n_{k+1}}) (V_{k+1}^R)^T \in \Real^{(n_{k+1} \ldots n_d) \times r_k},
\end{split}
\end{equation*}
The orthogonal projection operator onto the tangent space $\mathcal{P}_{T_{\bm{X}} \mathcal{M}_{\bm{r}}} : \Real^{n_1 \times \ldots \times n_d} \to T_{\bm{X}} \mathcal{M}_{\bm{r}}$ is then given by
\begin{equation}\label{eq:tt:proj_tangent}
    \mathcal{P}_{T_{\bm{X}} \mathcal{M}_{\bm{r}}} = \sum_{k = 1}^{d - 1} (\mathcal{P}_{\leq k-1} - \mathcal{P}_{\leq k}) \mathcal{P}_{\geq k+1} + \mathcal{P}_{\leq d-1},
\end{equation}
where 
\begin{equation*}
    \mathcal{P}_{\leq k} : \bm{Z} \mapsto \text{ten}_{k} (P_{\leq k} Z^{\langle k \rangle}), \quad \mathcal{P}_{\geq k+1} : \bm{Z} \mapsto \text{ten}_{k} (Z^{\langle k \rangle}P_{\geq k+1}), \quad \mathcal{P}_{\leq 0} = \Id.
\end{equation*}

Let us try to better understand the roles played by each individual projection operator $\mathcal{P}_{\leq k}$ and $\mathcal{P}_{\geq k+1}$  \eqref{eq:tt:proj_tangent}. Denote by
\begin{equation*}
    \bm{X} = [\bm{U}_1, \ldots, \bm{U}_{d-1}, \bm{G}_d] = [\bm{G}_1, \bm{V}_2, \ldots, \bm{V}_d].
\end{equation*}
are minimal left- and right-orthogonal TT representations of $\bm{X}$. Consider a tensor $\bm{Z}$ of TT rank $\bm{r'}$ with minimal TT representation $\bm{Z} = [\bm{C}_1, \ldots, \bm{C}_d]$. The projection $\mathcal{P}_{\leq k}$ onto the column span of the left interface matrix results in a tensor with a non-minimal TT representation
\begin{equation*}
    \mathcal{P}_{\leq k} \bm{Z} = [\bm{U}_1, \ldots, \bm{U}_{k-1}, \overline{\bm{U}}_k, \bm{C}_{k+1}, \ldots, \bm{C}_d]
\end{equation*}
by replacing the $k-1$ leftmost TT cores of $\bm{Z}$ with the left-orthogonal TT cores of $\bm{X}$, keeping the $d-k$ rightmost TT cores of $\bm{Z}$, and computing a new TT core $\overline{\bm{U}}_k$ such that $\overline{U}_k^L = U_k^L W_k$ for a square matrix $W_k \in \Real^{r_k \times r'_k}$. In the same vein, $\mathcal{P}_{\geq k+1}$ produces
\begin{equation*}
    \mathcal{P}_{\geq k+1} \bm{Z} = [\bm{C}_1, \ldots, \bm{C}_{k}, \overline{\bm{V}}_{k+1}, \bm{V}_{k+2}, \ldots, \bm{V}_d]
\end{equation*}
with $\overline{V}_{k+1}^R = H_{k+1} V_{k+1}^R$, $H_{k+1} \in \Real^{r'_k \times r_k}$. It is important to note that $W_k$ and $H_{k+1}$ can be taken out of $\overline{U}_k$ and $\overline{V}_{k+1}$ and multiplied onto $\bm{C}_{k+1}$ and $\bm{C}_k$, respectively, instead:
\begin{gather*}
    \mathcal{P}_{\leq k} \bm{Z} = [\bm{U}_1, \ldots, \bm{U}_{k-1}, \bm{U}_k, \overline{\bm{C}}_{k+1}, \ldots, \bm{C}_d], \quad \overline{C}_{k+1}^R = W_k C_{k+1}^R,\\
    \mathcal{P}_{\geq k+1} \bm{Z} = [\bm{C}_1, \ldots, \overline{\bm{C}}_{k}, \bm{V}_{k+1}, \bm{V}_{k+2}, \ldots, \bm{V}_d], \quad \overline{C}_{k}^L = C_k^L H_{k+1} .
\end{gather*}
We can also deduce from these formulations that $\mathcal{P}_{\leq j}$ and $\mathcal{P}_{\geq k}$ commute when $j < k$ (even when they are connected with different tangent spaces).

Going further, we see that $\mathcal{P}_{\leq k-1} - \mathcal{P}_{\leq k}$ is a projection operator as well. Indeed, it multiplies $Z^{\langle k \rangle}$ by an orthogonal projection on the left:
\begin{align*}
    (\mathcal{P}_{\leq k - 1} - \mathcal{P}_{\leq k})\bm{Z} &= \mathrm{ten}_k \left( [(I_{n_k} \otimes P_{\leq k - 1}) - P_{\leq k}] Z^{\langle k \rangle} \right) \\
    &= \mathrm{ten}_k \left( (I_{n_k} \otimes U_{\leq k - 1})(I_{n_k r_{k-1}} - U_k^{\mathrm{L}} (U_k^{\mathrm{L}})^T) (I_{n_k} \otimes U_{\leq k - 1}^T) Z^{\langle k \rangle} \right).
\end{align*}
It is clear that for $k \in [d-1]$ every $\mathcal{P}_{\leq k-1} - \mathcal{P}_{\leq k}$ acts by imposing the orthogonal gauge condition onto the $k$-th TT core:
\begin{equation*}
    (\mathcal{P}_{\leq k-1} - \mathcal{P}_{\leq k}) \bm{Z} = [\bm{U}_1, \ldots, \bm{U}_{k-1}, \bm{\Upsilon}_k, \bm{C}_{k+1}, \ldots, \bm{C}_d], \quad (U_k^L)^T \Upsilon_k^L = 0.
\end{equation*}
And by analogy for $k \in [d-1]$ (denote $\mathcal{P}_{\geq d+1} = \text{Id}$) we have
\begin{equation*}
    (\mathcal{P}_{\geq k+2} - \mathcal{P}_{\geq k+1}) \bm{Z} = [\bm{C}_1, \ldots, \bm{C}_{k}, \bm{\Xi}_{k+1}, \bm{V}_{k+2}, \ldots, \bm{V}_d], \quad \Xi_{k+1}^R (V_{k+1}^R)^T = 0.
\end{equation*}
We can now align the decomposition of the tangent space \eqref{eq:tt:tangent_space} with the definition of the orthogonal projection operator \eqref{eq:tt:proj_tangent} since
\begin{align*}
    (\mathcal{P}_{\leq k-1} - \mathcal{P}_{\leq k}) \mathcal{P}_{\geq k+1} &: \Real^{n_1 \times \ldots \times n_d} \to T_k, \quad k \in [d-1],\\
    \mathcal{P}_{\leq d-1} &: \Real^{n_1 \times \ldots \times n_d} \to T_d.
\end{align*}
The complementary orthogonal projection operator admits a simple expression too
\begin{equation*}
        \Id - \mathcal{P} = \sum_{k = 1}^{d - 1} (\mathcal{P}_{\leq k - 1} - \mathcal{P}_{\leq k})(\Id - \mathcal{P}_{\geq k + 1}),
\end{equation*}
where we can represent each $\Id - \mathcal{P}_{\geq k + 1}$ as a sum of projections that we already understand:
\begin{equation*}
    \Id - \mathcal{P}_{\geq k + 1} = \sum_{j = k}^{d-1} (\mathcal{P}_{\geq j + 2} - \mathcal{P}_{\geq j + 1}).
\end{equation*}
\begin{proof}[Now, we are in position to prove Lemma~\ref{tt:lemma:curvature_bound}]
At first, let us show that 
\begin{equation*}
        \| P_{\leq k} - \tilde{P}_{\leq k} \| \leq \frac{\| \bm{X} - \bm{\tilde{X}} \|_F}{\sigma_{\mathrm{min}}(X^{\langle k \rangle})}, \quad \| P_{\geq k+1} - \tilde{P}_{\geq k+1} \| \leq \frac{\| \bm{X} - \bm{\tilde{X}} \|_F}{\sigma_{\min}(X^{\langle k \rangle})},
\end{equation*}
Let $X^{\langle k \rangle} = U \Sigma V^T$ be the truncated SVD of rank $r_k$. Then we have
\begin{align*}
    \| P_{\leq k} - \tilde{P}_{\leq k} \| &= \| (I - \tilde{P}_{\leq k}) P_{\leq k} \| = \| (I - \tilde{P}_{\leq k}) U U^T \| 
    = \| (I - \tilde{P}_{\leq k}) X^{\langle k \rangle} V \Sigma^{-1} U^T \| \\
    &= \| (I - \tilde{P}_{\leq k}) (X^{\langle k \rangle} - \tilde{X}^{\langle k \rangle}) V \Sigma^{-1} U^T \| \\
    &\leq \| I - \tilde{P}_{\leq k} \| \| X^{\langle k \rangle} - \tilde{X}^{\langle k \rangle} \|  \|V\| \|\Sigma^{-1} \| \|U^T\| \\
    &= \|X^{\langle k \rangle} - \tilde{X}^{\langle k \rangle} \| / \sigma_{\min} (X^{\langle k \rangle})
    \leq \| \bm{X} - \bm{\tilde{X}} \|_F / \sigma_{\min}(X^{\langle k \rangle}).
\end{align*}
An analogous argument works for the right interface matrix.
    
For brevity, denote $\mathcal{P}_{T_{\bm{X}} \mathcal{M}_{\bm{r}}}$ as $\mathcal{P}$ and $\mathcal{P}_{T_{\bm{\tilde{X}}} \mathcal{M}_{\bm{r}}}$ as $\tilde{\mathcal{P}}$. Using the decomposition of $\Id - \tilde{\mathcal{P}}$ we prove the first part of the Lemma:
\begin{align*}
    \| (\Id - \tilde{\mathcal{P}}) \bm{X} \|_F &= \Bigg\| \sum_{k = 1}^{d - 1} (\tilde{\mathcal{P}}_{\leq k - 1} - \tilde{\mathcal{P}}_{\leq k})(\Id - \tilde{\mathcal{P}}_{\geq k + 1}) \bm{X} \Bigg\|_F \\
    &\leq \sum_{k = 1}^{d - 1} \left\| (\tilde{\mathcal{P}}_{\leq k - 1} - \tilde{\mathcal{P}}_{\leq k})(\Id - \tilde{\mathcal{P}}_{\geq k + 1}) \bm{X} \right\|_F \\
    &= \sum_{k = 1}^{d - 1} \left\| (\tilde{\mathcal{P}}_{\leq k - 1} - \tilde{\mathcal{P}}_{\leq k})(\mathcal{P}_{\geq k + 1} - \tilde{\mathcal{P}}_{\geq k + 1}) \bm{X} \right\|_F \\
    &= \sum_{k = 1}^{d - 1} \left\| (\mathcal{P}_{\geq k + 1} - \tilde{\mathcal{P}}_{\geq k + 1})(\tilde{\mathcal{P}}_{\leq k - 1} - \tilde{\mathcal{P}}_{\leq k}) \bm{X} \right\|_F \\
    &\leq \sum_{k = 1}^{d - 1} \left\| \mathcal{P}_{\geq k + 1} - \tilde{\mathcal{P}}_{\geq k + 1}\right\| \left\|(\tilde{\mathcal{P}}_{\leq k - 1} - \tilde{\mathcal{P}}_{\leq k}) \bm{X} \right\|_F \\
    &= \sum_{k = 1}^{d - 1} \left\| \mathcal{P}_{\geq k + 1} - \tilde{\mathcal{P}}_{\geq k + 1}\right\| \left\|(\tilde{\mathcal{P}}_{\leq k - 1} - \tilde{\mathcal{P}}_{\leq k}) (\bm{X} - \bm{\tilde{X}}) \right\|_F \\
    &\leq \sum_{k = 1}^{d - 1} \left\| \mathcal{P}_{\geq k + 1} - \tilde{\mathcal{P}}_{\geq k + 1}\right\| \left\|\tilde{\mathcal{P}}_{\leq k - 1} - \tilde{\mathcal{P}}_{\leq k}\right\| \|\bm{X} - \bm{\tilde{X}} \|_F \\
    &= \sum_{k = 1}^{d - 1} \left\| \mathcal{P}_{\geq k + 1} - \tilde{\mathcal{P}}_{\geq k + 1}\right\| \|\bm{X} - \bm{\tilde{X}} \|_F \\
    &\leq \| \bm{X} - \bm{\tilde{X}} \|_F^2 \sum_{k=1}^{d-1} \frac{1}{\sigma_{\min}(X^{\langle k \rangle})}.
\end{align*}
Above, we also use the identity $\bm{X} = \mathcal{P}_{\geq k+1} \bm{X}$; the commutativity of $\mathcal{\tilde{P}}_{\leq k}$ and $\mathcal{P}_{\geq k+1}$; the identity $\mathcal{\tilde{P}}_{\leq k-1} \bm{\tilde{X}} = \mathcal{\tilde{P}}_{\leq k} \bm{\tilde{X}}$; and the fact that $\mathcal{\tilde{P}}_{\leq k-1} - \mathcal{\tilde{P}}_{\leq k}$ is a projector. Straightforward calculation shows that
\begin{equation*}
    \mathcal{P} - \tilde{\mathcal{P}} = \sum_{k = 1}^{d - 1} \left[ (\mathcal{P}_{\leq k} - \mathcal{\tilde{P}}_{\leq k}) (\mathcal{P}_{\geq k + 2} - \mathcal{P}_{\geq k + 1}) + (\mathcal{\tilde{P}}_{\leq k - 1} - \mathcal{\tilde{P}}_{\leq k}) (\mathcal{P}_{\geq k + 1} - \mathcal{\tilde{P}}_{\geq k + 1}) \right].
\end{equation*}
Then the second assertion follows from
\begin{align*}
    \| \mathcal{P} - \tilde{\mathcal{P}} \| &\leq \sum_{k = 1}^{d - 1} \left[ \|\mathcal{P}_{\leq k} - \mathcal{\tilde{P}}_{\leq k}\| \|\mathcal{P}_{\geq k + 2} - \mathcal{P}_{\geq k + 1}\| + \|\mathcal{\tilde{P}}_{\leq k - 1} - \mathcal{\tilde{P}}_{\leq k}\| \|\mathcal{P}_{\geq k + 1} - \mathcal{\tilde{P}}_{\geq k + 1}\| \right] \\
    &= \sum_{k = 1}^{d - 1} \left[ \|\mathcal{P}_{\leq k} - \mathcal{\tilde{P}}_{\leq k}\| + \|\mathcal{P}_{\geq k + 1} - \mathcal{\tilde{P}}_{\geq k + 1}\| \right]
    \leq 2 \| \bm{X} - \bm{\tilde{X}} \|_F \sum_{k = 1}^{d - 1} \frac{1}{\sigma_{\min}\left( X^{\langle k \rangle} \right)}. \qedhere
\end{align*}
\end{proof}

\subsection{Local convergence of Riemannian gradient descent: tensor train recovery}
To prove Theorem~\ref{recov:theorem:convergence}, we need to establish a technical Lemma~\ref{recov:lemma:rip_rip}. It shows that the standard RIP \eqref{recov:eq:rip_vanilla} implies that the RIP \eqref{complt:eq:rip} holds globally on $\mathcal{M}_{\bm{r}}$, i.e. for all of its tangent spaces.
\begin{lemma}
\label{recov:lemma:rip_rip}
Let the linear operator $\mathcal{R}$ satisfy the standard RIP \eqref{recov:eq:rip_vanilla} of order $2\bm{r}$ with a RIP constant $0 < \delta_{2\bm{r}} < 1$. Then for an arbitrary tensor $\bm{X} \in \mathcal{M}_{\bm{r}}$ of TT rank $\bm{r}$ the following RIP holds with the same constant:
\begin{equation*}
    \| \mathcal{P}_{T_{\bm{X}} \mathcal{M}_{\bm{r}}} - \mathcal{P}_{T_{\bm{X}} \mathcal{M}_{\bm{r}}} \mathcal{R}^* \mathcal{R} \mathcal{P}_{T_{\bm{X}} \mathcal{M}_{\bm{r}}} \| < \delta_{2\bm{r}}.
\end{equation*}
\end{lemma}
\begin{proof}
Observe that $\mathcal{P}_{T_{\bm{X}} \mathcal{M}_{\bm{r}}} - \mathcal{P}_{T_{\bm{X}} \mathcal{M}_{\bm{r}}} \mathcal{R}^* \mathcal{R} \mathcal{P}_{T_{\bm{X}} \mathcal{M}_{\bm{r}}}$ is a self-adjoint operator so its norm can be characterized as
\begin{equation*}
    \| \mathcal{P}_{T_{\bm{X}} \mathcal{M}_{\bm{r}}} - \mathcal{P}_{T_{\bm{X}} \mathcal{M}_{\bm{r}}} \mathcal{R}^* \mathcal{R} \mathcal{P}_{T_{\bm{X}} \mathcal{M}_{\bm{r}}} \| = \max_{\bm{Z} : \| \bm{Z} \|_F = 1} \langle (\mathcal{P}_{T_{\bm{X}} \mathcal{M}_{\bm{r}}} - \mathcal{P}_{T_{\bm{X}} \mathcal{M}_{\bm{r}}} \mathcal{R}^* \mathcal{R} \mathcal{P}_{T_{\bm{X}} \mathcal{M}_{\bm{r}}}) \bm{Z}, \bm{Z} \rangle_F.
\end{equation*}
It follows that
\begin{align*}
    \| \mathcal{P}_{T_{\bm{X}} \mathcal{M}_{\bm{r}}} - \mathcal{P}_{T_{\bm{X}} \mathcal{M}_{\bm{r}}} \mathcal{R}^* \mathcal{R} \mathcal{P}_{T_{\bm{X}} \mathcal{M}_{\bm{r}}} \| &= \max_{\bm{Z} : \| \bm{Z} \|_F = 1} \left( \| \mathcal{P}_{T_{\bm{X}} \mathcal{M}_{\bm{r}}} \bm{Z} \|_F^2 - \| \mathcal{R} \mathcal{P}_{T_{\bm{X}} \mathcal{M}_{\bm{r}}} \bm{Z} \|_F^2 \right) \\
    &\leq \max_{\bm{Z} : \| \bm{Z} \|_F = 1} \left( \delta_{2\bm{r}} \| \mathcal{P}_{T_{\bm{X}} \mathcal{M}_{\bm{r}}} \bm{Z} \|_F^2 \right) \leq \delta_{2\bm{r}}
\end{align*}
because the elements of every tangent space to $\mathcal{M}_{\bm{r}}$ have ranks equal to at most $2\bm{r}$.
\end{proof}

In the proof of Theorem~\ref{recov:theorem:convergence}, we are only going to use the result of Lemma~\ref{recov:lemma:rip_rip} and not the standard RIP \eqref{recov:eq:rip_vanilla} itself. This explains why the proof can be adapted to the TT completion case.
\begin{proof}[Proof of Theorem~\ref{recov:theorem:convergence}]
The new iterate is given by \eqref{recov:eq:rgd} so by using the quasi-optimality of TT-SVD projection we get
\begin{align*}
    \| \bm{X}_{t+1} - \bm{A} \|_F &= \| \text{TT-SVD}_{\bm{r}}(\bm{X}_t - \alpha_t \bm{Y}_t) - \bm{A} \|_F \\
    &\leq \| \text{TT-SVD}_{\bm{r}}(\bm{X}_t - \alpha_t \bm{Y}_t) - (\bm{X}_t - \alpha_t \bm{Y}_t) \|_F + \| (\bm{X}_t - \alpha_t \bm{Y}_t) - \bm{A} \|_F \\
    &\leq \sqrt{d - 1} \| \text{opt}_{\bm{r}}(\bm{X}_t - \alpha_t \bm{Y}_t) - (\bm{X}_t - \alpha_t \bm{Y}_t) \|_F + \| (\bm{X}_t - \alpha_t \bm{Y}_t) - \bm{A} \|_F \\
    &\leq (1 + \sqrt{d - 1}) \| (\bm{X}_t - \alpha_t \bm{Y}_t) - \bm{A} \|_F.
\end{align*}
We then separate this Frobenius norm into a sum of several components that we will bound one by one
\begin{align*}
    \| (\bm{X}_t - \alpha_t \bm{Y}_t) - \bm{A} \|_F  &= \| \bm{X}_t - \alpha_t \mathcal{P}_{\bm{X}_t} \mathcal{R}^* \mathcal{R} (\bm{X}_t - \bm{A}) - \bm{A} \|_F 
    = \| (\Id - \alpha_t \mathcal{P}_{\bm{X}_t} \mathcal{R}^* \mathcal{R}) (\bm{X}_t - \bm{A}) \|_F \\
    &\leq \| (\Id - \mathcal{P}_{\bm{X}_t}) (\bm{X}_t - \bm{A}) \|_F + \| (\mathcal{P}_{\bm{X}_t} - \mathcal{P}_{\bm{X}_t} \mathcal{R}^* \mathcal{R} \mathcal{P}_{\bm{X}_t}) (\bm{X}_t - \bm{A}) \|_F \\
    &+ |1 - \alpha_t| \| \mathcal{P}_{\bm{X}_t} \mathcal{R}^* \mathcal{R} \mathcal{P}_{\bm{X}_t} (\bm{X}_t - \bm{A}) \|_F + |\alpha_t| \| \mathcal{P}_{\bm{X}_t} \mathcal{R}^* \mathcal{R} (\Id - \mathcal{P}_{\bm{X}_t}) (\bm{X}_t - \bm{A}) \|_F.
\end{align*}
For the first term we use the curvature bound Lemma \ref{tt:lemma:curvature_bound} to get
\begin{equation*}
    \| (\Id - \mathcal{P}_{\bm{X}_t}) (\bm{X}_t - \bm{A}) \|_F \leq (d-1) \frac{\| \bm{X}_t - \bm{A} \|_F^2}{\sigma_{\min}(\bm{A})}.
\end{equation*}
The bound for the second term follows from Lemma~\ref{recov:lemma:rip_rip}:
\begin{equation*}
    \| (\mathcal{P}_{\bm{X}_t} - \mathcal{P}_{\bm{X}_t} \mathcal{R}^* \mathcal{R} \mathcal{P}_{\bm{X}_t}) (\bm{X}_t - \bm{A}) \|_F \leq \delta_{2\bm{r}} \| \bm{X}_t - \bm{A} \|_F.
\end{equation*}
To estimate the third term, we note that the step size $\alpha_t = \| \bm{Y}_t \|_F^2 / \| \mathcal{R} \bm{Y}_t \|_F^2$ is close to one. Indeed, $\bm{Y}_t$ has TT ranks at most $2\bm{r}$ since it belongs to the tangent space and so
\begin{equation*}
    \frac{1}{1 + \delta_{2\bm{r}}} \leq \alpha_t \leq \frac{1}{1 - \delta_{2\bm{r}}}.
\end{equation*}
We then use the variational characterization of the Frobenius norm
\begin{align*}
    \| \mathcal{P}_{\bm{X}_t} \mathcal{R}^* \mathcal{R} \mathcal{P}_{\bm{X}_t} (\bm{X}_t - \bm{A}) \|_F &= \max_{\bm{Z} : \| \bm{Z} \|_F = 1} \langle \mathcal{P}_{\bm{X}_t} \mathcal{R}^* \mathcal{R} \mathcal{P}_{\bm{X}_t} (\bm{X}_t - \bm{A}), \bm{Z} \rangle_F\\
    &= \max_{\bm{Z} : \| \bm{Z} \|_F = 1} \langle \mathcal{R} \mathcal{P}_{\bm{X}_t} (\bm{X}_t - \bm{A}), \mathcal{R} \mathcal{P}_{\bm{X}_t} \bm{Z} \rangle_F \\
    &\leq \max_{\bm{Z} : \| \bm{Z} \|_F = 1} \| \mathcal{R} \mathcal{P}_{\bm{X}_t} (\bm{X}_t - \bm{A}) \|_F  \| \mathcal{R} \mathcal{P}_{\bm{X}_t} \bm{Z} \|_F\\
    &\leq \max_{\bm{Z} : \| \bm{Z} \|_F = 1} (1 + \delta_{2\bm{r}}) \| \mathcal{P}_{\bm{X}_t} (\bm{X}_t - \bm{A}) \|_F  \| \mathcal{P}_{\bm{X}_t} \bm{Z} \|_F \\
    &\leq  (1 + \delta_{2\bm{r}}) \| \bm{X}_t - \bm{A} \|_F.
\end{align*}
Thus the third term is bounded by
\begin{equation*}
    |1 - \alpha_t| \| \mathcal{P}_{\bm{X}_t} \mathcal{R}^* \mathcal{R} \mathcal{P}_{\bm{X}_t} (\bm{X}_t - \bm{A}) \|_F \leq \delta_{2\bm{r}} \frac{1 + \delta_{2\bm{r}}}{1 - \delta_{2\bm{r}}} \| \bm{X}_t - \bm{A} \|_F.
\end{equation*}
For the fourth term, we use the operator norm bound $\| \mathcal{R}^* \mathcal{R} \| \leq C$:
\begin{equation*}
    |\alpha_t| \| \mathcal{P}_{\bm{X}_t} \mathcal{R}^* \mathcal{R} (\Id - \mathcal{P}_{\bm{X}_t}) (\bm{X}_t - \bm{A}) \|_F \leq \frac{C}{1 - \delta_{2\bm{r}}} (d-1) \frac{\| \bm{X}_t - \bm{A} \|_F^2}{\sigma_{\min}(\bm{A})}.
\end{equation*}
Finally, collecting the terms, we get
\begin{equation*}
    \| \bm{X}_{t+1} - \bm{A} \|_F \leq (1 + \sqrt{d-1}) \left[ \frac{2\delta_{2\bm{r}}}{1 - \delta_{2\bm{r}}} + \left( 1 + \frac{C}{1 - \delta_{2\bm{r}}} \right) (d-1) \frac{\| \bm{X}_t - \bm{A} \|_F}{\sigma_{\min}(\bm{A})} \right] \| \bm{X}_t - \bm{A} \|_F.
\end{equation*}
If the initial condition $\bm{X}_0 \in \mathcal{M}_{\bm{r}}$ is close enough
\begin{equation*}
    (d-1) \frac{\| \bm{X}_0 - \bm{A} \|_F}{\sigma_{\min}(\bm{A})} < \frac{1}{1 + C - \delta_{2\bm{r}}} \left( \frac{1 - \delta_{2\bm{r}}}{1 + \sqrt{d-1}} - 2\delta_{2\bm{r}} \right),
\end{equation*}
the rate $\beta_0$ becomes smaller than one and as a consequence $\beta_t < \beta_0 < 1$.

To prove the final assertion we note that the TT rank of $(\Id - \mathcal{P}_{\bm{X}_t}) (\bm{X}_t - \bm{A})$ is at most $3\bm{r}$ and so the standard RIP can be used to estimate the fourth term:
\begin{equation*}
    |\alpha_t| \| \mathcal{P}_{\bm{X}_t} \mathcal{R}^* \mathcal{R} (\Id - \mathcal{P}_{\bm{X}_t}) (\bm{X}_t - \bm{A}) \|_F \leq \frac{1 + \delta_{3\bm{r}}}{1 - \delta_{2\bm{r}}} (d-1) \frac{\| \bm{X}_t - \bm{A} \|_F^2}{\sigma_{\min}(\bm{A})},
\end{equation*}
where we used the variational form of the Frobenius norm and the fact that $\delta_{2\bm{r}} \leq \delta_{3\bm{r}}$.
\end{proof}

\subsection{Local convergence of Riemannian gradient descent: tensor train completion}
For TT completion, we need an analog of Lemma~\ref{recov:lemma:rip_rip}. While the standard RIP \eqref{recov:eq:rip_vanilla} guarantees that the RIP \eqref{complt:eq:rip} holds globally, Lemma~\ref{complt:lemma:rip_rip} shows that if the RIP \eqref{complt:eq:rip} holds on a particular tangent space, so does it locally on the neighboring tangent spaces, though with a degrading constant.
\begin{lemma}
\label{complt:lemma:rip_rip}
Let $\bm{A} \in \mathcal{M}_{\bm{r}}$ be a tensor of TT rank $\bm{r}$ and suppose that $\mathcal{R}_{\Omega}$ satisfies the RIP \eqref{complt:eq:rip} and is bounded
\begin{equation*}
    \| \mathcal{P}_{T_{\bm{A}} \mathcal{M}_{\bm{r}}} - \rho^{-1} \mathcal{P}_{T_{\bm{A}} \mathcal{M}_{\bm{r}}} \mathcal{R}_{\Omega} \mathcal{P}_{T_{\bm{A}} \mathcal{M}_{\bm{r}}} \| < \varepsilon, \quad \| \mathcal{R}_{\Omega} \| \leq C.
\end{equation*}
Then for every tensor $\bm{X} \in \mathcal{M}_{\bm{r}}$ with a sufficiently close tangent space $\| \mathcal{P}_{T_{\bm{A}} \mathcal{M}_{\bm{r}}} - \mathcal{P}_{T_{\bm{X}} \mathcal{M}_{\bm{r}}} \| < \delta$, the sampling operator $\mathcal{R}_{\Omega}$ satisfies the RIP \eqref{complt:eq:rip} on it as well
\begin{equation*}
    \| \mathcal{P}_{T_{\bm{X}} \mathcal{M}_{\bm{r}}} - \rho^{-1} \mathcal{P}_{T_{\bm{X}} \mathcal{M}_{\bm{r}}} \mathcal{R}_{\Omega} \mathcal{P}_{T_{\bm{X}} \mathcal{M}_{\bm{r}}} \| < E(\delta) \equiv \varepsilon + \delta \left( 1 + 2 C \rho^{-1} \right).
\end{equation*}
\end{lemma}
\begin{proof}
Denote by $\mathcal{P}_{\bm{A}}$ the projection $\mathcal{P}_{T_{\bm{A}} \mathcal{M}_{\bm{r}}}$ and similarly for $\mathcal{P}_{\bm{X}}$. Then
\begin{align*}
    \| \mathcal{P}_{\bm{X}} - \rho^{-1} \mathcal{P}_{\bm{X}} \mathcal{R}_\Omega \mathcal{P}_{\bm{X}} \| &\leq \| \mathcal{P}_{\bm{A}} - \rho^{-1} \mathcal{P}_{\bm{A}} \mathcal{R}_\Omega \mathcal{P}_{\bm{A}} \| + \| \mathcal{P}_{\bm{X}} - \mathcal{P}_{\bm{A}} \| + \rho^{-1} \| \mathcal{P}_{\bm{X}} \mathcal{R}_{\Omega} \mathcal{P}_{\bm{X}} - \mathcal{P}_{\bm{A}} \mathcal{R}_\Omega \mathcal{P}_{\bm{A}} \| \\
    &\leq \varepsilon + \delta + \rho^{-1} \| \mathcal{P}_{\bm{X}} \mathcal{R}_{\Omega} \mathcal{P}_{\bm{X}} - \mathcal{P}_{\bm{X}} \mathcal{R}_\Omega \mathcal{P}_{\bm{A}} \| + \rho^{-1} \| \mathcal{P}_{\bm{X}} \mathcal{R}_{\Omega} \mathcal{P}_{\bm{A}} - \mathcal{P}_{\bm{A}} \mathcal{R}_\Omega \mathcal{P}_{\bm{A}} \| \\ 
    &\leq \varepsilon + \delta + \rho^{-1} \| \mathcal{P}_{\bm{X}} - \mathcal{P}_{\bm{A}} \| (\| \mathcal{P}_{\bm{X}} \mathcal{R}_{\Omega} \| + \| \mathcal{R}_\Omega \mathcal{P}_{\bm{A}} \|) \\
    &\leq \varepsilon + \delta \left( 1 + 2 C \rho^{-1} \right).
\end{align*}
A tighter bound can be derived if we estimate $\| \mathcal{R}_\Omega \mathcal{P}_{\bm{A}} \|$ with more care using the RIP, see \cite{WeiEtAlGuarantees2016}.
\end{proof}
Knowing how the RIP \eqref{complt:eq:rip} behaves in a neighborhood of $\bm{A}$, we can adapt the proof of Theorem~\ref{recov:theorem:convergence}. Note that neither Lemma~\ref{complt:lemma:rip_rip} nor Theorem~\ref{complt:theorem:convergence} exploits the actual nature of $\mathcal{R}_{\Omega}$, so their proofs apply to any other measurement operator with the RIP \eqref{complt:eq:rip}.
\begin{proof}[Proof of Theorem~\ref{complt:theorem:convergence}]
We basically repeat the proof of Theorem \ref{recov:theorem:convergence} with certain modifications related to the RIP \eqref{complt:eq:rip}. We immediately get that
\begin{equation*}
    \| \bm{X}_{t+1} - \bm{A} \|_F \leq (1 + \sqrt{d-1}) \| (\bm{X}_t - \alpha_t \bm{Y}_t) - \bm{A} \|_F
\end{equation*}
and
\begin{align*}
    \| (\bm{X}_t - \alpha_t \bm{Y}_t) - \bm{A} \|_F &\leq \| (\Id - \mathcal{P}_{\bm{X}_t}) (\bm{X}_t - \bm{A}) \|_F + \| (\mathcal{P}_{\bm{X}_t} - \rho^{-1} \mathcal{P}_{\bm{X}_t} \mathcal{R}_\Omega \mathcal{P}_{\bm{X}_t}) (\bm{X}_t - \bm{A}) \|_F \\
    &+ |\rho^{-1} - \alpha_t| \| \mathcal{P}_{\bm{X}_t} \mathcal{R}_\Omega \mathcal{P}_{\bm{X}_t} (\bm{X}_t - \bm{A}) \|_F + |\alpha_t| \| \mathcal{P}_{\bm{X}_t} \mathcal{R}_\Omega (\Id - \mathcal{P}_{\bm{X}_t}) (\bm{X}_t - \bm{A}) \|_F.
\end{align*}
Each term is then estimated using Lemmas~\ref{tt:lemma:curvature_bound} and \ref{complt:lemma:rip_rip}. We only need to bound $\alpha_t = \| \bm{Y}_t \|_F^2 / \langle \mathcal{R}_\Omega \bm{Y}_t, \bm{Y}_t \rangle_F$. Denote
\begin{equation*}
    \delta_t = 2 (d-1) \frac{\| \bm{X}_t - \bm{A} \|_F}{\sigma_{\min}(\bm{A})}.
\end{equation*}
The operator $\mathcal{P}_{\bm{X}_t} - \rho^{-1} \mathcal{P}_{\bm{X}_t} \mathcal{R}_\Omega \mathcal{P}_{\bm{X}_t}$ being self-adjoint, we have
\begin{equation*}
    -E(\delta_t) \langle \bm{Y}_t, \bm{Y}_t \rangle_F < \langle (\rho^{-1} \mathcal{P}_t \mathcal{R}_{\Omega} \mathcal{P}_t - \mathcal{P}_t) \bm{Y}_t, \bm{Y}_t \rangle_F < E(\delta_t) \langle \bm{Y}_t, \bm{Y}_t \rangle_F.
\end{equation*}
As a consequence,
\begin{equation*}
    \frac{\rho^{-1}}{1 + E(\delta_t)} \leq \alpha_t \leq \frac{\rho^{-1}}{1 - E(\delta_t)}
\end{equation*}
and the Theorem follows.
\end{proof}

\subsection{Recovery guarantees}
The interface incoherence assumption that we make in Theorem~\ref{guarantee:theorem:rip_interface} allows us to estimate the norm of the projection of a canonical basis tensor $\bm{E}_\omega \in \Real^{n_1 \times \ldots \times n_d}$ onto the tangent space $T_{\bm{A}} \mathcal{M}_{\bm{r}}$, i.e. estimate the coherence of the tangent space.
\begin{lemma}
\label{guarantee:lemma:interface_projection_bound}
Let $\bm{A} \in \mathcal{M}_{\bm{r}}$ be a tensor of TT rank $\bm{r}$ with bounded interface coherence $\mu_I(\bm{A}) \leq \mu_0$. Then for every canonical basis tensor $\bm{E}_{\omega}$, $\omega \in [n_1] \times \ldots \times [n_d]$, its projection onto the tangent space $T_{\bm{A}} \mathcal{M}_{\bm{r}}$ can be bounded from above as
\begin{equation*}
    \| \mathcal{P}_{T_{\bm{A}} \mathcal{M}_{\bm{r}}} \bm{E}_\omega \|_F^2 \leq C_0 \equiv \frac{\mu_0}{n_1 \ldots n_d}\left(n_1 r_1 + \mu_0 \sum_{k = 2}^{d-1} r_{k - 1} n_k r_k + r_{d-1}n_d \right).
\end{equation*}
\end{lemma}
\begin{proof}
Every canonical basis tensor $\bm{E}_{\omega}$ can be represented as an outer product of canonical basis vectors $\bm{E}_{\omega} = e_{i_1} \circ \ldots \circ e_{i_d}$ with $e_{i_k} \in \Real^{n_k}$. Then using the definition of the projection onto the tangent space \eqref{eq:tt:proj_tangent} we get
\begin{align*}
    \| \mathcal{P}_{T_{\bm{A}} \mathcal{M}_{\bm{r}}} \bm{E}_\omega \|_F^2 &= \sum_{k = 1}^{d - 1} \Big[ \| \mathcal{P}_{\leq k-1} \mathcal{P}_{\geq k+1} \bm{E}_\omega \|_F^2 - \| \mathcal{P}_{\leq k} \mathcal{P}_{\geq k+1} \bm{E}_\omega \|_F^2 \Big] + \| \mathcal{P}_{\leq d-1} \bm{E}_\omega \|_F^2 \\
    &\leq \| \mathcal{P}_{\geq 2} \bm{E}_\omega \|_F^2 + \sum_{k = 2}^{d-1} \| \mathcal{P}_{\leq k-1} \mathcal{P}_{\geq k+1} \bm{E}_\omega \|_F^2 + \| \mathcal{P}_{\leq d-1} \bm{E}_\omega \|_F^2.
\end{align*}
The first and last terms are bounded directly using the interface incoherence property because
\begin{equation*}
    \| \mathcal{P}_{\geq 2} \bm{E}_\omega \|_F^2 = \| e_{i_1} (e_{i_2} \otimes \ldots \otimes e_{i_d})^T P_{\geq 2} \|_F^2 = \| P_{\geq 2} (e_{i_2} \otimes \ldots \otimes e_{i_d}) \|_2^2 \leq \frac{r_1}{n_2 \ldots n_d} \mu_0
\end{equation*}
and
\begin{equation*}
    \| \mathcal{P}_{\leq d-1} \bm{E}_\omega \|_F^2 = \| P_{\leq d-1} (e_{i_{d-1}} \otimes \ldots \otimes e_{i_1}) e_{i_d}^T \|_F^2 = \| P_{\leq d-1} (e_{i_{d-1}} \otimes \ldots \otimes e_{i_1}) \|_2^2 \leq \frac{r_{d-1}}{n_1 \ldots n_{d-1}} \mu_0.
\end{equation*}
We then estimate every summand $\| \mathcal{P}_{\leq k-1} \mathcal{P}_{\geq k+1} \bm{E}_\omega \|_F^2$ as follows
\begin{align*}
    \| \mathcal{P}_{\leq k-1} \mathcal{P}_{\geq k+1} \bm{E}_\omega \|_F^2 &= \| P_{\leq k-1}(e_{i_{k - 1}} \otimes \ldots \otimes e_{i_1}) \circ~e_{i_k} \circ P_{\geq k + 1} (e_{i_{k+1}} \otimes \ldots \otimes e_{i_d}) \|_F^2 \\
    &= \| P_{\leq k-1}(e_{i_{k - 1}} \otimes \ldots \otimes e_{i_1}) \|_2^2 \cdot \| P_{\geq k + 1} (e_{i_{k+1}} \otimes \ldots \otimes e_{i_d}) \|_2^2 \\
    &\leq \frac{r_{k - 1}}{n_1 \ldots n_{k-1}} \mu_0 \frac{r_k}{n_{k+1} \ldots n_d} \mu_0.
\end{align*}
It remains to add the estimates together.
\end{proof}

The main probabilistic tool we need in order to prove Theorem \ref{guarantee:theorem:rip_interface} is the noncommutative Bernstein inequality (Theorem~\ref{guarantee:theorem:bernstein}), which is used in analyzing large deviation bounds.
\begin{theorem}[\cite{RechtSimpler2011}, Theorem 4]
\label{guarantee:theorem:bernstein}
Let $X_1, \ldots, X_K \in \Real^{s_1 \times s_2}$ be independent zero-mean random matrices. Suppose that
\begin{equation*}
    \sigma^2_k = \max \left( \Big\| \E\big[X_k X_k^T\big] \Big\|_2, \Big\| \E\left[X_k^T X_k\right] \Big\|_2 \right)
\end{equation*}
and $\| X_k \|_2 \leq R$ almost surely for every $k$. Then for any $\tau > 0$,
\begin{equation*}
    \Prob\left\{  \bigg\| \sum_{k = 1}^K X_k \bigg\|_2 > \tau \right\} \leq (s_1 + s_2) \exp\left( \frac{-\tau^2/2}{\sum_{k = 1}^K \sigma^2_k + R \tau/3} \right).
\end{equation*}
If in addition $\tau \leq \sum_{k=1}^K \sigma^2_k / R$,
\[
\Prob\left\{  \bigg\| \sum_{k = 1}^K X_k \bigg\|_2 > \tau \right\} \leq (s_1 + s_2) \exp\left( \frac{-\frac{3}{8}\tau^2}{\sum_{k = 1}^K \sigma^2_k} \right).
\]
\end{theorem}

\begin{proof}[Proof of Theorem \ref{guarantee:theorem:rip_interface}]
Since any tensor $\bm{Z}$ can be represented as a linear combination of canonical basis tensors
\begin{equation*}
    \bm{Z} = \sum_{\omega \in [n_1] \times \ldots \times [n_d]} \bm{Z}(i_1, \ldots, i_d) \bm{E}_\omega  = \sum_{\omega \in [n_1] \times \ldots \times [n_d]} \langle \bm{Z}, \bm{E}_\omega \rangle_F \bm{E}_\omega,
\end{equation*}
the application of the operator $\mathcal{P}_{\bm{A}} \mathcal{R}_{\Omega} \mathcal{P}_{\bm{A}}$---where we once again write $\mathcal{P}_{\bm{A}}$ as a shorthand for $\mathcal{P}_{T_{\bm{A}} \mathcal{M}_{\bm{r}}}$---can be computed as
\begin{equation*}
    \mathcal{P}_{\bm{A}} \mathcal{R}_{\Omega} \mathcal{P}_{\bm{A}} \bm{Z} = \mathcal{P}_{\bm{A}} \left( \sum_{\omega \in \Omega} \langle \mathcal{P}_{\bm{A}} \bm{Z}, \bm{E}_\omega \rangle_F \bm{E}_\omega \right) = \sum_{\omega \in \Omega} \langle \bm{Z}, \mathcal{P}_{\bm{A}} \bm{E}_{\omega} \rangle_F \mathcal{P}_{\bm{A}} \bm{E}_{\omega}.
\end{equation*}
Every $\omega \in \Omega$ is a uniformly distributed random variable so $\mathcal{P}_{\bm{A}} \mathcal{R}_{\Omega} \mathcal{P}_{\bm{A}}$ is a sum of $|\Omega|$  i.i.d. random operators
\begin{equation*}
    \mathcal{P}_{\bm{A}} \mathcal{R}_{\Omega} \mathcal{P}_{\bm{A}} = \sum_{\omega \in \Omega} \mathcal{S}_{\omega}, \quad 
    \mathcal{S}_\omega \bm{Z} = \langle \bm{Z}, \mathcal{P}_{\bm{A}} \bm{E}_{\omega} \rangle_F \mathcal{P}_{\bm{A}} \bm{E}_{\omega}.
\end{equation*}
The expected value of $\mathcal{S}_{\omega}$ is $\frac{1}{n_1 \ldots n_d} \mathcal{P}_{\bm{A}}$ and we can estimate the norm of the deviation as
\begin{equation*}
    \| \mathcal{S}_{\omega} - \tfrac{1}{n_1 \ldots n_d} \mathcal{P}_{\bm{A}} \| \leq \max\left( \| \mathcal{S}_{\omega} \|, \tfrac{1}{n_1 \ldots n_d} \| \mathcal{P}_{\bm{A}} \| \right) = \max \left( \| \mathcal{P}_{\bm{A}} \bm{E}_\omega \|_F^2,  \tfrac{1}{n_1 \ldots n_d} \right) = C_0.
\end{equation*}
The first inequality holds since both $\mathcal{S}_{\omega}$ and $\frac{1}{n_1 \ldots n_d} \mathcal{P}_{\bm{A}}$ are positive semidefinite. To apply the noncommutative Bernstein inequality we also need a bound for the variance of $\mathcal{S}_\omega$:
\begin{align*}
    \left\| \E \{ \mathcal{S}_{\omega} - \tfrac{1}{n_1 \ldots n_d} \mathcal{P}_{\bm{A}} \}^2 \right\| &= \left\| \E \{ \| \mathcal{P}_{\bm{A}} \bm{E}_\omega \|_F^2  \mathcal{S}_{\omega} \} - \tfrac{1}{(n_1 \ldots n_d)^2} \mathcal{P}_{\bm{A}} \right\| 
    \leq \max \left( \left\| \E \{ \| \mathcal{P}_{\bm{A}} \bm{E}_\omega \|_F^2 \mathcal{S}_{\omega} \} \right\|, \tfrac{1}{(n_1 \ldots n_d)^2} \right) \\
    &\leq \max \left( \tfrac{C_{0}}{n_1 \ldots n_d}, \tfrac{1}{(n_1 \ldots n_d)^2} \right) 
    = \frac{C_{0}}{n_1 \ldots n_d}.
\end{align*}
We then apply the second part of Theorem \ref{guarantee:theorem:bernstein} to $\mathcal{S}_{\omega} - \tfrac{1}{n_1 \ldots n_d} \mathcal{P}_{\bm{A}}$ for $\omega \in \Omega$. When $\tau / \rho = \varepsilon < 1$, we have
\begin{align*}
    \Prob \left\{ \| \mathcal{P}_{\bm{A}} - \rho^{-1} \mathcal{P}_{\bm{A}} \mathcal{R}_{\Omega} \mathcal{P}_{\bm{A}} \| > \tau / \rho = \varepsilon \right\} &\leq 2 (n_1 \ldots n_d) \exp\left( -\frac{3}{8} \frac{\tau^2}{\rho C_{0}} \right)
    \leq 2n^d \exp\left( -\frac{3}{8} \frac{\rho \varepsilon^2}{C_{0}} \right)
    \leq 2n^{d(1 - \beta)}
\end{align*}
provided that $\rho \geq \frac{8}{3} \frac{C_{0}}{\varepsilon^2} d\beta\log(n)$.
\end{proof}

Theorem \ref{guarantee:theorem:rip_core} is proved in exactly the same way as Theorem \ref{guarantee:theorem:rip_interface}, except we use a refined estimate of $\| \mathcal{P}_{T_{\bm{A}} \mathcal{M}_{\bm{r}}} \bm{E}_\omega \|_F$ that is given in the following Lemma~\ref{guarantee:lemma:core_projection_bound}.

\begin{lemma}\label{guarantee:lemma:core_projection_bound}
Let $\bm{A} \in \mathcal{M}_{\bm{r}}$ be a tensor of TT rank $\bm{r}$ with bounded core coherence $\mu_C(\bm{A}) \leq \mu_1$.Then for every canonical basis tensor $\bm{E}_{\omega}$, $\omega \in [n_1] \times \ldots \times [n_d]$, its projection onto the tangent space $T_{\bm{A}} \mathcal{M}_{\bm{r}}$ can be bounded from above as
\[
    \| \mathcal{P}_{T_{\bm{A}} \mathcal{M}_{\bm{r}}} \bm{E}_\omega \|_F^2 \leq \frac{\mu_1^{d-1}}{n_1 \ldots n_d} \sum_{k = 1}^{d} r_{k - 1} n_k r_k.
\]
\end{lemma}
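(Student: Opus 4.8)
The plan is to exploit the orthogonal decomposition $T_{\bm{A}}\mathcal{M}_{\bm{r}} = T_1 \oplus \ldots \oplus T_d$ from \eqref{eq:tt:tangent_space}, under which the tangent projection splits as $\mathcal{P}_{T_{\bm{A}}\mathcal{M}_{\bm{r}}} = \sum_{k=1}^{d}\mathcal{P}_{T_k}$, the $k$-th summand being the orthogonal projection onto $T_k$, namely $(\mathcal{P}_{\leq k-1} - \mathcal{P}_{\leq k})\mathcal{P}_{\geq k+1}$ for $k \in [d-1]$ and $\mathcal{P}_{\leq d-1}$ for $k=d$, as read off \eqref{eq:tt:proj_tangent}. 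Since the $T_k$ are mutually orthogonal, the Pythagorean identity gives $\| \mathcal{P}_{T_{\bm{A}}\mathcal{M}_{\bm{r}}} \bm{E}_\omega \|_F^2 = \sum_{k=1}^d \| \mathcal{P}_{T_k}\bm{E}_\omega \|_F^2$, so it suffices to show that each term is bounded by $\frac{\mu_1^{d-1}}{n_1\ldots n_d}\, r_{k-1}n_k r_k$ and then sum.

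Writing $\omega = (i_1,\ldots,i_d)$ and abbreviating $e_{\leq k} = e_{i_k}\otimes\ldots\otimes e_{i_1}$ and $e_{\geq k+1} = e_{i_d}\otimes\ldots\otimes e_{i_{k+1}}$, the $k$-th unfolding of $\bm{E}_\omega$ factorizes as the rank-one matrix $E_\omega^{\langle k\rangle} = e_{\leq k}\, e_{\geq k+1}^T$. The next step is to read the two projections through the $k$-th unfolding. The operator $\mathcal{P}_{\geq k+1}$ right-multiplies $E_\omega^{\langle k\rangle}$ by $P_{\geq k+1}$, while on the $k$-th unfolding $\mathcal{P}_{\leq k-1}$ acts as left-multiplication by $I_{n_k}\otimes P_{\leq k-1}$ (prepending the free index $i_k$, consistent with the ordering $e_\omega = e_{i_d}\otimes\ldots\otimes e_{i_1}$) and $\mathcal{P}_{\leq k}$ as left-multiplication by $P_{\leq k}$. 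Hence $\mathcal{P}_{T_k}\bm{E}_\omega = \mathrm{ten}_k\big(Q_k\, e_{\leq k}(P_{\geq k+1}e_{\geq k+1})^T\big)$ with $Q_k = (I_{n_k}\otimes P_{\leq k-1}) - P_{\leq k}$. Because $U_{\leq k} = (I_{n_k}\otimes U_{\leq k-1})U_k^L$ by \eqref{eq:tt:proj_recursive}, the range of $P_{\leq k}$ sits inside that of $I_{n_k}\otimes P_{\leq k-1}$, so $Q_k$ is itself an orthogonal projection, and the rank-one structure gives $\| \mathcal{P}_{T_k}\bm{E}_\omega \|_F^2 = \| Q_k e_{\leq k} \|_2^2\, \| P_{\geq k+1} e_{\geq k+1} \|_2^2$.

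It then remains to bound the two scalar factors by interface coherences and to invoke Lemma \ref{guarantee:lemma:core2interface}. For the left factor, $\| Q_k e_{\leq k} \|_2^2 \leq \| (I_{n_k}\otimes P_{\leq k-1})e_{\leq k} \|_2^2 = \| P_{\leq k-1}e_{\leq k-1} \|_2^2$ since $e_{\leq k} = e_{i_k}\otimes e_{\leq k-1}$; the definition of $\mu(A_{\leq k-1})$ together with $\mu(A_{\leq k-1})\leq\mu_1^{k-1}$ yields $\| P_{\leq k-1}e_{\leq k-1} \|_2^2 \leq \mu_1^{k-1} r_{k-1}/(n_1\ldots n_{k-1})$. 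Symmetrically, the definition of $\mu(A_{\geq k+1})$ and $\mu(A_{\geq k+1})\leq\mu_1^{d-k}$ give $\| P_{\geq k+1}e_{\geq k+1} \|_2^2 \leq \mu_1^{d-k} r_k/(n_{k+1}\ldots n_d)$. Multiplying and using $(n_1\ldots n_{k-1})(n_{k+1}\ldots n_d) = (n_1\ldots n_d)/n_k$ collapses the powers of $\mu_1$ to the uniform $\mu_1^{k-1}\mu_1^{d-k} = \mu_1^{d-1}$ and produces exactly $\frac{\mu_1^{d-1}}{n_1\ldots n_d}\, r_{k-1}n_k r_k$; the endpoint cases $k=1$ and $k=d$ reduce to the same formula with a trivial factor (using $r_0 = r_d = 1$ and empty products equal to one, where for $k=d$ one bounds $\mathcal{P}_{\leq d-1}\bm{E}_\omega$ directly). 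Summing over $k$ delivers $C_1$.

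The main obstacle is the bookkeeping in the second paragraph: correctly identifying $\mathcal{P}_{\leq k-1}$ with left-multiplication by $I_{n_k}\otimes P_{\leq k-1}$ on the $k$-th unfolding, getting the Kronecker ordering to agree with the paper's convention, and verifying that $Q_k$ is a genuine orthogonal projection so that discarding it can only decrease the norm. Everything else is the rank-one Frobenius factorization and two direct applications of the coherence definition; the essential content is really Lemma \ref{guarantee:lemma:core2interface} distributing its powers of $\mu_1$ evenly across the two interface factors, which is precisely what makes every summand carry the same exponent $\mu_1^{d-1}$ rather than the uneven powers appearing in $C_0$.
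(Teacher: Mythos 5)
Your proof is correct and follows essentially the same route as the paper's: the paper proves this lemma by repeating the proof of Lemma \ref{guarantee:lemma:interface_projection_bound} (orthogonal/telescoping decomposition of $\mathcal{P}_{T_{\bm{A}} \mathcal{M}_{\bm{r}}}$ via \eqref{eq:tt:proj_tangent}, rank-one factorization of the unfoldings of $\bm{E}_\omega$, and per-factor coherence bounds) with the interface coherences replaced by the bounds $\mu(A_{\leq k}) \leq \mu_1^k$, $\mu(A_{\geq k+1}) \leq \mu_1^{d-k}$ from Lemma \ref{guarantee:lemma:core2interface}, exactly as you do. Your extra care in verifying that $(I_{n_k} \otimes P_{\leq k-1}) - P_{\leq k}$ is an orthogonal projection and in tracking the Kronecker ordering is just a more explicit rendering of the same argument (the paper instead drops the subtracted terms before factorizing), and both yield the identical per-term bound $\mu_1^{d-1} r_{k-1} n_k r_k / (n_1 \ldots n_d)$.
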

\begin{proof}
The proof goes along the same line as Lemma~\ref{guarantee:lemma:interface_projection_bound} and uses Lemma~\ref{guarantee:lemma:core2interface} to obtain the bounds.
\end{proof}
\section{Discussion}\label{section:discussion}
The sample complexities that we obtained for TT completion (Theorem~\ref{guarantee:theorem:rip_core}) and TT completion with auxiliary subspace information (Theorem~\ref{side:theorem:rip}) depend on the core coherence \eqref{guarantee:eq:core_coherence} as $\mu_C(\bm{A})^{d-1}$. It is, thus, important to have a qualitative estimate of how large the core coherence can be. In the matrix case, \cite{CandesRechtExact2009a}, $\mu_C(A)$ was proved to be of order $\max(r, \log(n))$ for matrices, whose left and right singular factors are chosen uniformly at random from the set of $n \times r$ matrices with orthonormal columns $\text{St}(n, r)$. To sample such factors, one can take a random $n \times r$ matrix with i.i.d standard Gaussian entries and apply Gram-Schmidt orthogonalization \cite{EatonMultivariate1983}.

Consider now a minimal left-orthogonal TT representation of a tensor $\bm{A} = [\bm{U}_1, \ldots, \bm{U}_{d-1}, \bm{G}_d]$, whose TT cores $\bm{U}_k$ are sampled uniformly from $\text{St}(r_{k-1} n_k, r_k)$. What can be said about the distribution of their subblocks $U_k^{(i_k)}$ that are used in the definition of the left coherence of a TT core \eqref{guarantee:eq:left_coherence}? It is known that if we take a random orthogonal matrix $Q^{(n)} \in \Real^{n \times n}$, pick any of its subblocks $Q^{(n)}_{p,q} \in \Real^{p \times q}$, and let $n \to \infty$, then the matrix $\sqrt{n} Q^{(n)}_{p,q}$ converges in distribution to a matrix with i.i.d. standard Gaussian entries \cite{EatonGroup1989, DiaconisEtAlFinite1992}. As a consequence, we can, informally, treat the blocks $\sqrt{n_k} U_{k}^{(i_k)}$ as random matrices sampled from the standard Gaussian distribution. Random matrix theory provides probabilistic estimates on the spectral norm of a standard Gaussian random matrix \cite{VershyninIntroduction2011}. With probability at least $1 - 2\exp(-t^2/2)$, we have
\begin{equation*}
    \| \sqrt{n_k} U_{k}^{(i_k)} \|_2 \leq \sqrt{r_{k-1}} + \sqrt{r_k} + t.
\end{equation*}
It follows that, with high probability,
\begin{equation*}
    \tfrac{r_{k-1} n_k}{r_k} \| U_{k}^{(i_k)} \|_2^2 \leq \tfrac{r_{k-1}}{r_k} (\sqrt{r_{k-1}} + \sqrt{r_k} + t)^2
\end{equation*}
and so $\mu_C(\bm{A})$ should be of order $\max(r, \log(n))$ as well, if we set $t = c \sqrt{\log(n)}$.

The exponential dependence $\mu_C(\bm{A})^{d-1}$ originates in Lemma \ref{guarantee:lemma:core2interface}, where we bound the spectral norms of the row vectors
\begin{equation*}
    U_{1}^{(i_1)} U_{2}^{(i_2)} \ldots U_{k}^{(i_k)}
\end{equation*}
using the submultiplicative property. Assume, once again, that we can informally treat the subblocks $\sqrt{n_k} U_{k}^{(i_k)}$ as standard Gaussian random matrices. The product of a Gaussian random matrix and a Gaussian random vector has a known distribution \cite{MatteiMultiplying2017}. In our case, the first product $(U_{1}^{(i_1)} U_{2}^{(i_2)})^T \in \Real^{r_2}$ is distributed as
\begin{equation*}
    \sqrt{n_1 n_2}(U_{1}^{(i_1)} U_{2}^{(i_2)})^T \sim \sqrt{s_1(r_1)} z,
\end{equation*}
where $s_1(r_1) \sim \chi^2(r_1)$ is a chi-squared random variable with $r_1$ degrees of freedom and $z \in \Real^{r_2}$ is a standard Gaussian random vector independent of $s_1$. Multiplying further, we find that
\begin{equation*}
    \sqrt{n_1 \ldots n_k}(U_{1}^{(i_1)} U_{2}^{(i_2)} \ldots U_{k}^{(i_k)})^T \sim \sqrt{s_1(r_1) \ldots s_{k-1}(r_{k-1})} z
\end{equation*}
with a standard Gaussian random vector $z \in \Real^{r_k}$. The squared Euclidean norm of this vector is distributed as a product of $k$ independent chi-squared random variables with the number of degrees of freedom equal to the corresponding TT rank:
\begin{equation}\label{eq:recov:chichi}
    (n_1 \ldots n_k) \left\| U_{1}^{(i_1)} U_{2}^{(i_2)} \ldots U_{k}^{(i_k)} \right\|_2^2 \sim s_1(r_1) \ldots s_k(r_{k}).
\end{equation}
Its expectation is a good reference value to compare $\mu(A_{\leq k})$ against:
\begin{equation*}
    \E \left\{ \frac{n_1 \ldots n_k}{r_k} \left\| U_{1}^{(i_1)} U_{2}^{(i_2)} \ldots U_{k}^{(i_k)} \right\|_2^2\right\} = r_1 \ldots r_{k-1} \leq r^{k - 1}.
\end{equation*}
The exponential dependence on $k$ leads to the exponential dependence on $d$ in the sample complexity via Lemma \ref{guarantee:lemma:core_projection_bound}. 

\begin{figure}
    \centering
    \includegraphics[width=0.45\linewidth]{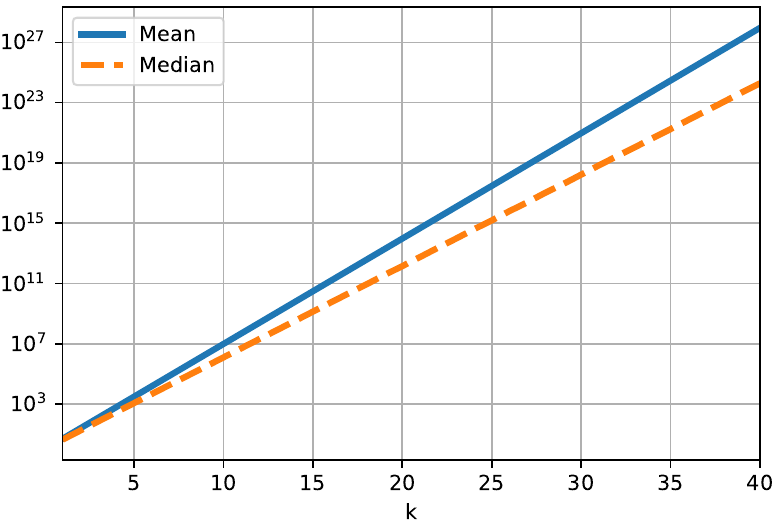}
    \caption{Numerically computed median of $\prod_{j = 1}^k \chi^2(5)$.}
    \label{discussion:fig:chi_median}
\end{figure}

It is possible, however, that the distribution is not concentrated around the expected value but is spread out, i.e. the majority of random row-vectors $U_{1}^{(i_1)} U_{2}^{(i_2)} \ldots U_{k}^{(i_k)}$ has very small norms. In other words, for a significant subset of multi-indices $\omega$ the projections $\| \mathcal{P}_{\bm{A}} \bm{E}_\omega \|_F$ might be small. In this case, the Bernstein inequality, which is the crux of Theorem \ref{guarantee:theorem:rip_core}, can produce crude estimates---as it requires a uniform upper bound of the random variable that holds almost surely---and a different tail bound such as in \cite{Maurerbound2003} could lead to finer results. To check this hypothesis, we can estimate the \textit{median} of $s_1(r_1) \ldots s_k(r_{k})$ in a numerical simulation. Unfortunately, the results in Figure~\ref{discussion:fig:chi_median} show that the median grows exponentially too, and so the squared norm is of order $r^k$ for many row-vectors.

\begin{figure}
\centering
	\includegraphics[width=0.45\textwidth]{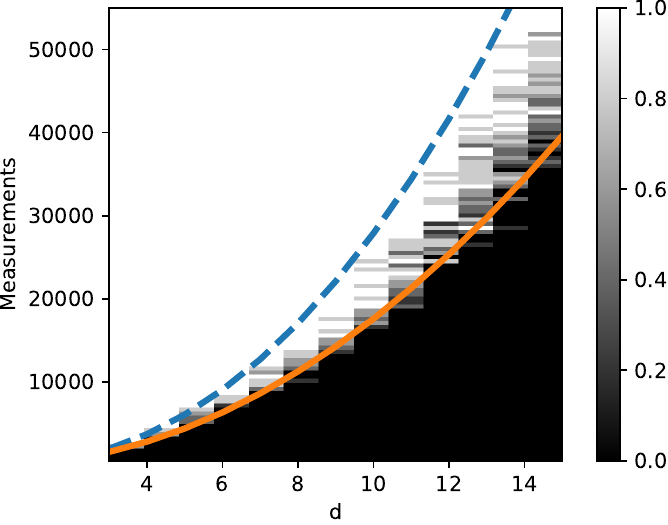}
\caption{Phase plot of the Riemannian gradient descent for $n = 50$, $r = 3$, and varying number of dimensions $d$. The values between 0 and 1 are the frequencies of successful recovery for the given parameters. The orange (solid) and blue (dashed) curves correspond to $|\Omega| = d^2 r^2 n \log(n) / 10$ and $|\Omega| = d^{2.2} r^2 n \log(n) / 10$, respectively.}
\label{discussion:fig:dims}
\end{figure}

Still, we hope that the `true` estimate of $|\Omega|$ should not depend exponentially on the number of dimensions $d$, the phase plot in Figure~\ref{discussion:fig:dims} supports our hopes. We applied the RGD \eqref{complt:eq:rgd} to TT completion with $n = 50$, $\bm{r} = (3, \ldots, 3)$, and varying number of dimensions $d$ and sample size $|\Omega|$; for every combination of $d$ and $|\Omega|$ we carried out 5 random experiments. In each of them, we 1) generated a random tensor $\bm{A}$ and a random initial approximation $\bm{X}_0$, both of TT rank $\bm{r}$, with i.i.d. standard Gaussian TT cores; 2) generated a uniformly distributed sampling set $\Omega_1$ and a uniformly distributed test set $\Omega_2$, both of size $|\Omega|$; 3) ran 500 iterations of the RGD with data $\mathcal{R}_{\Omega_1} \bm{A}$ starting from $\bm{X}_0$; 4) and called the iterations successful if the relative error on the test set $\Omega_2$ was below $10^{-4}$:
\begin{equation*}
    \| \mathcal{R}_{\Omega_2} \bm{A} - \mathcal{R}_{\Omega_2} \bm{X}_{500}\|_F < 10^{-4} \| \mathcal{R}_{\Omega_2} \bm{A} \|_F.
\end{equation*}
The implementation of the RGD was taken from the TTeMPS Toolbox (the RTTC method, see \url{https://www.epfl.ch/labs/anchp/index-html/software/ttemps/}). The phase plot in Figure~\ref{discussion:fig:dims} shows the frequency of success for every combination of $d$ and $|\Omega|$. We see that the phase transition curve between the `never successful` (black) and `always successful` (white) regions seems to exhibit polynomial growth. 

The practical implications of the polynomial dependence are best seen on the following example. Consider a dataset of $2^d$ real numbers, of which we know only $|\Omega|$. We can rearrange it into a $d$-dimensional tensor and, if $r$ is its largest TT rank, recover all of the data from $|\Omega| \gtrsim d^3 r^2$ elements, as Figure~\ref{discussion:fig:dims} suggests. If, instead, we ignore the tensor structure and treat the data as a $2^{d-1} \times 2^{d-1}$ matrix, the same bound evaluates to $|\Omega| \gtrsim d 2^d r^2$. Therefore, using the tensor structure (if it exists) of a large dataset, we shall manage to recover it from significantly fewer elements.

A different kind of reasoning might be needed to bridge the gap between the theoretical exponential bound \eqref{guarantee:theorem:rip_core} and the numerical polynomial bound from Figure~\ref{discussion:fig:dims}. One possible direction can lie in relaxing the RIP \eqref{complt:eq:rip}. Currently, it states that the sampling operator $\mathcal{R}_{\Omega}$ is well-conditioned on the \textit{whole} tangent space $T_{\bm{A}} \mathcal{M}_{\bm{r}}$. Recent research into non-convex optimization for matrix completion and phase retrieval, however, shows that the gradients are far from being arbitrary and enjoy good entrywise bounds \cite{ChenEtAlGradient2019, DingChenLeave2020} when the initial condition is incoherent with respect to the measurement operator. So, by adapting the RIP \eqref{complt:eq:rip} to gradients with entrywise bounds, it might be possible to reduce the sample complexity in Theorem~\ref{guarantee:theorem:rip_core}.

\section*{Acknowledgements}
We are grateful to the referees, whose thoughtful comments helped us present our ideas in a clearer and more structured way. This work was supported by Russian Science Foundation (project 21-71-10072).

\appendix
\section{Other approaches to matrix and tensor completion}\label{section:related}
Given the success of nuclear norm minimization for matrices---in terms of both computational feasibility and sample complexity---the transition to the multi-dimensional case did not suffer from the lack of ideas. The nuclear norm heuristic was extended as a convex surrogate of Tucker (also known as multilinear) ranks \cite{GandyEtAlTensor2011, SignorettoEtAlNuclear2010, LiuEtAlTensor2013} and TT ranks \cite{BenguaEtAlEfficient2017} by setting the cost function to the sum of the nuclear norms (SNN) of the tensor flattenings or unfoldings.

The Tucker/multilinear ranks of a $d$-dimensional tensor $\bm{A}$ are defined as a tuple of ranks of all the mode-$j$ flattenings
\begin{equation*}
    \text{rank}_\text{Tucker} (\bm{A}) = (\text{rank}(A_{(1)}), \ldots, \text{rank}(A_{(d)})).
\end{equation*}
Assume for simplicity that all the sizes are equal to $n$ and all the Tucker/multilinear ranks are equal to $r$. The sample complexity of SNN for Tucker recovery from random Gaussian measurements was studied in \cite{TomiokaEtAlEstimation2011, MuEtAlSquare2014}. Tucker completion via SNN was treated in \cite{huang2015provable} where the authors assumed the incoherence \eqref{intro:eq:coherence} of one of the mode-$j$ flattenings $A_{(j)}$; the RIP \eqref{intro:eq:rip} was proved to hold with high probability if the sample $\Omega \subseteq [n]^d$ contains
\begin{equation*}
    |\Omega| \gtrsim \mu_0 d r n^{d-1} \log(n)
\end{equation*}
randomly chosen elements. With the help of an additional mutual incoherence property of the tensor, it was proved that SNN can recover $\bm{A}$ with high probability if
\begin{equation*}
    |\Omega| \gtrsim \mu_0 d^4 r n^{d-1} \log^2(n).
\end{equation*}

A different view on the tensor nuclear norm and tensor completion consists in extending the spectral norm and taking its dual \cite{YuanZhangTensor2016}. This approach, however, is mostly of theoretical value: the norm in question is computationally intractable but leads to improved estimates of the sample size compared to SNN. In \cite{YuanZhangIncoherent2017}, a special incoherent nuclear norm was constructed for the Tucker completion problem. An analog of the RIP \eqref{complt:eq:rip} was proved to hold with high probability under the incoherence assumption \eqref{intro:eq:incoherent} for all mode-$j$ fiber spans provided that
\begin{equation*}
    |\Omega| \gtrsim \mu_0^{d-1} d r^{d-1} n \log(n)
\end{equation*}
samples are drawn uniformly at random. The minimization of the incoherent nuclear norm was proved to recover $\bm{A}$ when
\begin{equation*}
    |\Omega| \gtrsim C_d  (\mu_0^{d-1} r^{d-1} n + \mu_0^{\frac{d-1}{2}} r^{\frac{d-1}{2}} n^{\frac{3}{2}}) \log^2(n), \quad C_d = C_d(d).
\end{equation*}

Another approach to tensor completion is to minimize the residual under the rank constraint (as in Eq.~\eqref{intro:eq:nonconvex}), without going into the geometric nuances on Riemannian optimization. In the matrix case, the singular value projection (SVP) algorithm \cite{JainEtAlGuaranteed2010} (see also a closely related iterative hard thresholding algorithm \cite{TannerWeiNormalized2013}) was developed as a projected gradient descent method
\begin{equation}
\label{intro:eq:svp}
    X_{t + 1} = \text{SVD}_r \left(X_t - \frac{\rho^{-1}}{1 + \delta_{2r}} \left[ \mathcal{R}_{\Omega} X_t - \mathcal{R}_{\Omega} A \right]  \right), \quad X_0 = 0.
\end{equation}
Here, $0 < \delta_{2r} < 1$ is a RIP constant, where RIP is understood as
\begin{equation*}
    (1 - \delta_{2r}) \| X \|_F^2 \leq \rho^{-1} \| \mathcal{R}_{\Omega}X \|_F^2 \leq (1 + \delta_{2r}) \| X \|_F^2
\end{equation*}
for all matrices of rank at most $2r$ \textit{and} with bounded coherence \eqref{intro:eq:incoherent}. This RIP holds with high probability when
\begin{equation*}
    |\Omega| \gtrsim \mu_0^2 r^2 n \log(n),
\end{equation*}
which exceeds what is required by Theorem 1 for the RIP \eqref{intro:eq:rip}. The convergence of the SVP is, however, only conjectured in Ref.~\cite{JainEtAlGuaranteed2010}: the problem is that $X_{t+1} - X_t$ and $X_t - A$ need to have uniformly bounded coherences \eqref{intro:eq:coherence}. Linear convergence in the entrywise norm was later proved, in the symmetric case, in \cite{DingChenLeave2020} when 
\begin{equation*}
    |\Omega| \gtrsim \kappa^6 \mu_0^4 r^6 n \log(n),
\end{equation*}
where $\kappa$ is the condition number (in the spectral norm) of $A$.

The SVP framework has been extended to tensor recovery in Tucker and TT formats \cite{RauhutEtAlTensor2015, RauhutEtAlLow2017} under the assumption that the measurement operator satisfies the standard RIP \eqref{recov:eq:rip_vanilla}. In the multi-dimensional setting, the exact SVD-based matrix projection is replaced with HOSVD \cite{DeLathauwer2000multilinear} and TT-SVD \cite{OseledetsTensorTrain2011}, which are the standard generalizations of SVD to the Tucker and TT formats. The main difference between the matrix and tensor cases is that the truncated HOSVD and TT-SVD are quasi-optimal projections as opposed to the optimal truncated SVD. The theory of the SVP convergence for matrices has been extended to quasi-optimal projections \cite{LebedevaEtAlLowRank2021}. For HOSVD and TT-SVD, the quasi-optimality constant is rather large, $\sqrt{d}$, a fact that poses problems for theoretical analysis (but less so for practical purposes since $\sqrt{d}$ corresponds to the worst case). That is why a local optimality assumption accompanies the standard RIP of order $3\bm{r}$---note that matrix SVP requires the standard RIP of order $2r$ (see \cite{JainEtAlGuaranteed2010})---in the proof of global SVP convergence for tensor recovery \cite{RauhutEtAlTensor2015, RauhutEtAlLow2017}. We are not aware of any theoretical results about tensor completion using SVP.

An iteration of Riemannian gradient descent for Tucker recovery can be written with the help of notation we introduced above:
\begin{equation*}
    \bm{X}_{t+1} = \text{HOSVD}_{\bm{r}} \left(\bm{X}_t - \alpha_t \mathcal{P}_{T_{\bm{X}_t}} [\mathcal{R} \bm{X}_t - \mathcal{R} \bm{A}] \right).
\end{equation*}
Its local convergence was proved in \cite{RauhutEtAlTensor2015} for $\mathcal{R}$ satisfying RIP of order $3\bm{r}$, which was improved to $2\bm{r}$ in \cite{CaiEtAlProvable2021}. The authors of the latter also show that one step of Tucker-SVP with zero initial condition gives an estimate that is sufficiently close to $\bm{A}$ for local convergence to start working. Riemannian Tucker and TT completion were studied in \cite{KressnerEtAlLowrank2014, SteinlechnerRiemannian2016} but the number of samples was estimated only numerically.

The RGD for Tucker recovery was proposed in \cite{RauhutEtAlTensor2015} and proved to converge locally for the measurement operators satisfying the standard RIP \eqref{recov:eq:rip_vanilla} of order $3\bm{r}$, which was improved to $2\bm{r}$ in \cite{CaiEtAlProvable2021}. The authors of the latter also show that one step of Tucker-SVP with zero initial condition gives an estimate that is sufficiently close to the solution for the local convergence to start working. The algorithms for Riemannian Tucker and TT completion were studied in \cite{KressnerEtAlLowrank2014, SteinlechnerRiemannian2016} but the number of samples was estimated only numerically.

By comparing the current state of affairs in matrix and tensor completion, we can now see what principal difficulties are brought in by the multiple dimensions. For matrices, the nuclear norm formulation appeared to be a perfect object from the theoretical point of view. Indeed, it exhibits both polynomial computational complexity and nearly optimal sample complexity. Meanwhile, the computable SNN model leads to poor recovery guarantees for Tucker completion, and the tightest known sample complexity is achieved by the computationally intractable incoherent nuclear norm. Likewise, if we look at the development of SVP and Riemannian optimization for matrix and tensor completion in parallel, we will note that the RIP of the sampling operator and the recovery guarantees for tensor completion are only beginning to be explored in the literature.
\section{Tensor train completion with auxiliary subspace information}\label{section:side}
Typically, algorithms for matrix and tensor completion with subspace information  are developed as generalizations of the methods used in usual matrix/tensor completion. The nuclear norm minimization approach was used in \cite{XuEtAlSpeedup2013, JainDhillonProvable2013, LedentEtAlFine2021} to recover a low-rank matrix $A$ with additional subspace information \eqref{side:eq:subspaces}:
\begin{equation}
\label{intro:eq:nuclear_side}
    \| W \|_* \to \min \quad \text{s.t.} \quad \mathcal{R}_{\Omega} (Q_1 W Q_2^T) = \mathcal{R}_{\Omega}A.
\end{equation}
It is claimed in \cite{XuEtAlSpeedup2013} that $Q_1^T A Q_2 \in \Real^{m_1 \times m_2}$ is the unique solution to \eqref{intro:eq:nuclear_side} when
\begin{equation*}
    |\Omega| \gtrsim \mu^2 r m \log(m) \log(n), \quad n = \max(n_1, n_2), \quad m = \max(m_1, m_2),
\end{equation*}
indices are chosen uniformly at random. Note that the estimate depends only logarithmically on the matrix size $n$ (cf. Theorem~\ref{intro:theorem:nnm_recovery}). The coefficient $\mu^2$ depends on the coherences \eqref{intro:eq:coherence} of the column and row spaces of $A$ and of the additionally known subspaces spanned by $Q_1$ and $Q_2$. 

In \cite{BudzinskiyZamarashkinNote2020}, a Riemannian algorithm for TT completion with subspace information was proposed and its sample complexity was studied numerically. Here, we want to address the question from the theoretical point of view. Tensor completion with subspace information for other tensor formats was treated in \cite{BudzinskiyZamarashkinVariational2022, LongEtAlTrainable2022}. 

\subsection{Riemannian gradient descent}
First of all, we need to establish the geometry of the problem. Denote by $\mathcal{M}_{\bm{r}}^{(m)}$ and $\mathcal{M}_{\bm{r}}^{(n)}$ the submanifolds of small $m_1 \times \ldots \times m_d$ and large $n_1 \times \ldots \times n_d$ tensors of TT rank $\bm{r}$, respectively. The following Lemma~\ref{side:lemma:product_rank} shows that the size of the tensor can be increased by applying mode-$k$ products without altering the rank.

\begin{lemma}\label{side:lemma:product_rank}
Let $S_k \in \Real^{n_k \times m_k}$ be a matrix of rank $m_k$. Then for any tensor $\bm{W} \in \Real^{m_1 \times \ldots \times m_d}$ the mode-$k$ product with $S_k$ does not change its TT rank:
\begin{equation*}
    \mathrm{rank}_{TT}(\bm{W}) = \mathrm{rank}_{TT}(\bm{W} \times_k S_k).    
\end{equation*}
\end{lemma}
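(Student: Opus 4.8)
The plan is to work directly from the definition of the TT-rank as the tuple of ranks of the unfoldings $X^{\langle j \rangle}$, and to show that each unfolding of $\bm{Y} = \bm{W} \times_k B_k$ arises from the corresponding unfolding of $\bm{W}$ by a rank-preserving Kronecker-structured multiplication. The single algebraic fact I rely on is that the rank of a Kronecker product equals the product of the ranks, so that a Kronecker product of identity matrices with one copy of a full-rank $B_k$ again has full column (or row) rank.

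First I would fix $j \in [d-1]$ and split into two cases according to whether the affected mode $k$ lands in the row block $(1,\ldots,j)$ or the column block $(j+1,\ldots,d)$ of the unfolding $X^{\langle j\rangle}$. If $k \le j$, then using the Kronecker ordering fixed by the convention $e_\omega = e_{i_d}\otimes\cdots\otimes e_{i_1}$ one writes
\begin{equation*}
    Y^{\langle j \rangle} = \bigl( I_{m_j} \otimes \cdots \otimes I_{m_{k+1}} \otimes B_k \otimes I_{m_{k-1}} \otimes \cdots \otimes I_{m_1} \bigr) W^{\langle j \rangle} =: L_j \, W^{\langle j \rangle},
\end{equation*}
where $B_k$ occupies the slot corresponding to index $k$. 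If instead $k > j$, the mode acts on the column block and, tracking that the column transformation carries $B_k$ transposed, one obtains
\begin{equation*}
    Y^{\langle j \rangle} = W^{\langle j \rangle} \bigl( I_{m_d} \otimes \cdots \otimes B_k^T \otimes \cdots \otimes I_{m_{j+1}} \bigr) =: W^{\langle j \rangle} \, M_j.
\end{equation*}

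Next I would read off the ranks of the structural factors. Since $B_k \in \Real^{n_k \times m_k}$ has rank $m_k$ with $n_k \ge m_k$, it has full column rank, and $B_k^T$ has full row rank $m_k$. Applying $\mathrm{rank}(A \otimes B) = \mathrm{rank}(A)\,\mathrm{rank}(B)$ repeatedly, $L_j$ has rank $m_1 \cdots m_j$, equal to its number of columns, while $M_j$ has rank $m_{j+1} \cdots m_d$, equal to its number of rows. Thus $L_j$ has full column rank and $M_j$ has full row rank. Left multiplication by a full-column-rank matrix, and right multiplication by a full-row-rank matrix, both preserve matrix rank, so in either case $\mathrm{rank}(Y^{\langle j\rangle}) = \mathrm{rank}(W^{\langle j\rangle})$. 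As this holds for every $j \in [d-1]$, the two TT-ranks coincide and the lemma follows.

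I expect the only delicate point to be writing down the unfolding identities correctly, in particular pinning down the exact position of $B_k$ inside the Kronecker product dictated by the index ordering and deciding whether $B_k$ or $B_k^T$ appears; once these are fixed, the rank bookkeeping is routine. It is worth emphasizing that the hypothesis $\mathrm{rank}(B_k) = m_k$ is precisely what makes both $L_j$ and $M_j$ full-rank and is therefore essential: without it the mode-$k$ product could collapse fibers and strictly decrease some TT-rank.
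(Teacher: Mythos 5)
Your proof is correct, but it takes a genuinely different route from the paper's. You argue directly from the definition of the TT-rank as the tuple of ranks of the unfoldings: you write each unfolding of $\bm{Y} = \bm{W} \times_k B_k$ as $L_j W^{\langle j \rangle}$ (for $k \le j$) or $W^{\langle j \rangle} M_j$ (for $k > j$), where the Kronecker-structured factor has full column (resp.\ row) rank because $\mathrm{rank}(A \otimes B) = \mathrm{rank}(A)\,\mathrm{rank}(B)$ and $B_k$ has full column rank; this is self-contained and needs nothing beyond elementary linear algebra. The paper instead works at the level of TT-cores: it takes a \emph{minimal} TT-representation $\bm{W} = [\bm{C}_1,\ldots,\bm{C}_d]$, observes that $\bm{W} \times_k B_k = [\bm{C}_1,\ldots,\bm{C}_{k-1},\bm{D}_k,\bm{C}_{k+1},\ldots,\bm{C}_d]$ with $D_k^L = (B_k \otimes I_{r_{k-1}})C_k^L$ and $D_k^R = C_k^R(B_k^T \otimes I_{r_k})$, and checks that these unfoldings remain full rank, so the new representation is still minimal; this relies on the cited characterization that minimal representations have TT-ranks equal to the tensor's TT-ranks. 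The trade-off: your argument avoids invoking that minimality theorem, whereas the paper's argument additionally produces an explicit TT-representation of the transformed tensor, which is reused later (the cores $S_k^L = (Q_k \otimes I_{r_{k-1}})U_k^L$ appear again in the proof of Lemma \ref{side:lemma:interface_projection_bound}). Your only stated caveat---getting the Kronecker ordering and the transpose on $B_k$ right---is handled correctly under the paper's convention $e_\omega = e_{i_d} \otimes \cdots \otimes e_{i_1}$, so there is no gap.
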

\begin{proof}
Let $\bm{W} = [\bm{C}_1, \ldots, \bm{C}_d]$ be a minimal TT representation of $\bm{W} \in \Real^{m_1 \times \ldots \times m_d}$. By definition of the mode-$k$ product,
\begin{equation*}
    \bm{W} \times_k S_k = [\bm{C}_1, \ldots, \bm{C}_{k-1}, \bm{D}_k, \bm{C}_{k+1}, \bm{C}_d], \quad \bm{D}_k = \bm{C}_k \times_2 S_k \in \Real^{r_{k-1} \times n_k \times r_k}.
\end{equation*}
It suffices to show that this TT representation is also minimal, i.e. that the left and right unfoldings of $\bm{D}_k$ are full-rank. It is easy to see that the left and right unfoldings  satisfy
\begin{equation*}
    D_k^L = (S_k \otimes I_{r_{k-1}})C_k^L, \quad D_k^R = C_k^R (S_k^T \otimes I_{r_{k}})
\end{equation*}
and are full-rank as products of full-rank matrices.
\end{proof}

Thanks to Lemma \ref{side:lemma:product_rank}, the linear operator $\mathcal{Q}: \Real^{m_1 \times \ldots \times m_d} \to \Real^{n_1 \times \ldots \times n_d}$ defined by
\begin{equation*}
    \mathcal{Q} \bm{W} = \bm{W} \times_1 Q_1 \times_2 \ldots \times_d Q_d,
\end{equation*}
where $Q_k$ are matrices with orthonormal columns, can be restricted to the submanifold $\mathcal{M}^{(m)}_{\bm{r}}$ as $\mathcal{Q}: \mathcal{M}^{(m)}_{\bm{r}} \to \mathcal{M}^{(n)}_{\bm{r}}$. Its image $\mathcal{Q}(\mathcal{M}^{(m)}_{\bm{r}})$ is an embedded submanifold \cite{LeeIntroduction2012} of $\mathcal{M}^{(n)}_{\bm{r}}$ and the adjoint operator
\begin{equation*}
    \mathcal{Q}^* \bm{X} = \bm{X} \times_1 Q_1^T \times_2 \ldots \times_d Q_d^T
\end{equation*}
acts as the left inverse $\mathcal{Q}^* \mathcal{Q} = \mathrm{Id}$. The set $\mathcal{Q}(\mathcal{M}^{(m)}_{\bm{r}})$ contains precisely those tensors that satisfy the rank and subspace requirements (this explains Eq.~\eqref{side:eq:subspaces}).

\begin{lemma}
\label{side:lemma:manifold}
Let $\bm{A} \in \mathcal{M}^{(n)}_{\bm{r}}$ be a tensor of TT rank $\bm{r}$. All of its mode-$k$ fiber spans belong to the subspaces spanned by the columns of $Q_k$ if and only if $\bm{A} \in \mathcal{Q}(\mathcal{M}^{(m)}_{\bm{r}})$.
\end{lemma}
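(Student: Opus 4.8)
The plan is to prove the two implications separately, using the candidate preimage $\bm{W} = \mathcal{Q}^* \bm{A}$ for the nontrivial direction and relying on the composition rules for mode products (products along distinct modes commute, products along a common mode compose via matrix multiplication of the factors).

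For the easy direction ($\Leftarrow$), suppose $\bm{A} = \mathcal{Q}\bm{W}$ with $\bm{W} \in \mathcal{M}^{(m)}_{\bm{r}}$. The key observation is how flattening interacts with mode products: the mode-$k$ product by $Q_k$ acts on the left of the mode-$k$ flattening, while every other mode product acts on the right through a Kronecker factor, so (in the Kronecker order inherited from the flattening convention)
\[
    A_{(k)} = Q_k W_{(k)} \Big( Q_d \otimes \cdots \otimes Q_{k+1} \otimes Q_{k-1} \otimes \cdots \otimes Q_1 \Big)^T.
\]
Hence $A_{(k)} = Q_k M$ for a suitable matrix $M$, so every mode-$k$ fiber of $\bm{A}$ lies in $\mathrm{col}(Q_k)$, which gives $\mathrm{col}(A_{(k)}) \subseteq \mathrm{col}(Q_k)$ for each $k \in [d]$.

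For the converse ($\Rightarrow$), I would first recast the fiber-span hypothesis in operator form. Since the columns of $Q_k$ are orthonormal, $Q_k Q_k^T$ is the orthogonal projector onto $\mathrm{col}(Q_k)$, and the mode-$k$ flattening of $\bm{A} \times_k (Q_k Q_k^T)$ equals $Q_k Q_k^T A_{(k)}$; therefore $\mathrm{col}(A_{(k)}) \subseteq \mathrm{col}(Q_k)$ holds if and only if $\bm{A} \times_k (Q_k Q_k^T) = \bm{A}$. Setting $\bm{W} = \mathcal{Q}^* \bm{A}$ and using the composition rules, we obtain
\[
    \mathcal{Q}\mathcal{Q}^* \bm{A} = \bm{A} \times_1 (Q_1 Q_1^T) \times_2 \cdots \times_d (Q_d Q_d^T),
\]
and applying the invariance one mode at a time collapses the right-hand side to $\bm{A}$, so $\mathcal{Q}\bm{W} = \bm{A}$. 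It then remains to check that $\bm{W}$ lies on the small manifold. Here I would invoke Lemma \ref{side:lemma:product_rank}: each $Q_k$ has full column rank $m_k$, so the mode-$k$ product by $Q_k$ preserves TT-rank; building up $\bm{A} = \bm{W} \times_1 Q_1 \cdots \times_d Q_d$ one mode at a time yields $\mathrm{rank}_{TT}(\bm{W}) = \mathrm{rank}_{TT}(\bm{A}) = \bm{r}$, whence $\bm{W} \in \mathcal{M}^{(m)}_{\bm{r}}$ and $\bm{A} \in \mathcal{Q}(\mathcal{M}^{(m)}_{\bm{r}})$.

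The computations are routine; the only step demanding slight care is the equivalence between the geometric hypothesis $\mathrm{col}(A_{(k)}) \subseteq \mathrm{col}(Q_k)$ and the algebraic identity $\mathcal{Q}\mathcal{Q}^* \bm{A} = \bm{A}$, since this is precisely what lets $\bm{W} = \mathcal{Q}^* \bm{A}$ serve as a genuine preimage rather than merely a projection. Once that identity is in hand, the membership $\bm{W} \in \mathcal{M}^{(m)}_{\bm{r}}$ follows mechanically from the rank-preservation Lemma \ref{side:lemma:product_rank}.
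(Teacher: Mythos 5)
Your proposal is correct and follows essentially the same route as the paper: both directions use the identity $Q_kQ_k^TA_{(k)}=A_{(k)}$ to get $\mathcal{Q}\mathcal{Q}^*\bm{A}=\bm{A}$, take $\bm{W}=\mathcal{Q}^*\bm{A}$ as the preimage, and invoke Lemma \ref{side:lemma:product_rank} to place $\bm{W}$ on $\mathcal{M}^{(m)}_{\bm{r}}$. Your write-up is merely more explicit in two spots (the full Kronecker-factor formula for $A_{(k)}$, and stating the rank equality directly rather than by contraposition), but the substance is identical.
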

\begin{proof}
Let $\bm{A} = \mathcal{Q} \bm{B}$ with $\bm{B} \in \mathcal{M}^{(m)}_{\bm{r}}$. Then by the definition of the mode-$k$ product
\begin{equation*}
    A_{(k)} = Q_k B_{(k)}, \quad k \in [d],
\end{equation*}
and the inclusion of subpsaces follows. Conversely, let the mode-$k$ fiber spans of $\bm{A}$ belong to the column spans of $Q_k$. Then $Q_k Q_k^T A_{(k)} = A_{(k)}$ and $\mathcal{Q} \mathcal{Q}^* \bm{A} = \bm{A}$. Then $\bm{B} = \mathcal{Q}^* \bm{A}$ lies in $\mathcal{M}^{(m)}_{\bm{r}}$ since if it had different TT ranks, so would $\mathcal{Q} \bm{B}$ by Lemma \ref{side:lemma:product_rank}.
\end{proof}

This means that Riemannian optimization can be applied to TT completion with subspace information, and we only need to narrow down the manifold:
\begin{equation*}\label{side:eq:optim_narrow}
    \| \sqrt{\mathcal{R}}_{\Omega} \bm{X} - \sqrt{\mathcal{R}}_{\Omega} \bm{A} \|_F^2 \to \min \quad \text{s.t.} \quad \bm{X} \in \mathcal{Q}(\mathcal{M}^{(m)}_{\bm{r}}).
\end{equation*}
The projection onto the new tangent space can be easily computed as
\begin{equation*}
    \mathcal{P}_{T_{\bm{X}} \mathcal{Q}(\mathcal{M}^{(m)}_{\bm{r}})} = \mathcal{Q}^* \mathcal{P}_{T_{\bm{X}} \mathcal{M}^{(n)}_{\bm{r}}}
\end{equation*}
and the formulation of the RGD follows immediately. However, for the theoretical analysis we prefer to use an equivalent optimization problem that works on $\mathcal{M}^{(m)}_{\bm{r}}$ rather than directly on $\mathcal{Q}(\mathcal{M}^{(m)}_{\bm{r}})$. Let $\bm{A} = \mathcal{Q} \bm{B}$ with $\bm{B} \in \mathcal{M}^{(m)}_{\bm{r}}$, then we consider
\begin{equation*}
    \| \sqrt{\mathcal{R}}_{\Omega} \mathcal{Q} \bm{W} - \sqrt{\mathcal{R}}_{\Omega} \mathcal{Q} \bm{B} \|_F^2 \to \min \quad \text{s.t.} \quad \bm{W} \in \mathcal{M}^{(m)}_{\bm{r}}.
\end{equation*}
The modified sampling operator is $\mathcal{Q}^* \mathcal{R}_{\Omega} \mathcal{Q}$ and a step of the RGD can be written as
\begin{equation*}
\label{side:eq:rgd}
    \bm{W}_{t+1} = \text{TT-SVD}_{\bm{r}} \left( \bm{W}_t - \alpha_t \bm{Y}_t \right) \in \mathcal{M}^{(m)}_{\bm{r}}, \quad
    \bm{Y}_t = \mathcal{P}_{\bm{W}_t} [\mathcal{Q}^* \mathcal{R}_{\Omega} \mathcal{Q} \bm{W}_t - \mathcal{Q}^* \mathcal{R}_{\Omega} \mathcal{Q} \bm{B}] \in T_{\bm{W}_t} \mathcal{M}^{(m)}_{\bm{r}}
\end{equation*}
with the step size
\begin{equation*}
\label{side:eq:step_size}
    \alpha_t = \frac{\| \bm{Y}_t \|_F^2}{\langle \mathcal{Q}^* \mathcal{R}_{\Omega} \mathcal{Q} \bm{Y}_t, \bm{Y}_t \rangle_F}.
\end{equation*}

The RIP \eqref{complt:eq:rip}, Lemma \ref{complt:lemma:rip_rip}, and Theorem \ref{complt:theorem:convergence} for TT completion undergo simple modifications according to 
\begin{equation}
\label{side:eq:rip}
    \Big\| \mathcal{P}_{T_{\bm{B}} \mathcal{M}^{(m)}_{\bm{r}}} - \rho^{-1} \mathcal{P}_{T_{\bm{B}} \mathcal{M}^{(m)}_{\bm{r}}} \mathcal{Q}^* \mathcal{R}_{\Omega} \mathcal{Q} \mathcal{P}_{T_{\bm{B}} \mathcal{M}^{(m)}_{\bm{r}}} \Big\| < \varepsilon, \quad \| \mathcal{Q}^* \mathcal{R}_{\Omega} \mathcal{Q} \| \leq C.
\end{equation}
Other than this, the formulations and proofs transfer verbatim to the current scenario.
The convergence rate and the estimate of the local convergence basin that are then given in terms of $\bm{W}_t$ and $\bm{B}$ hold identically for $\mathcal{Q} \bm{W}_t$ and $\bm{A}$ since $\| \bm{W} - \bm{B} \|_F = \| \mathcal{Q} \bm{W} - \bm{A} \|_F$ and $\sigma_{\min}(\bm{B}) = \sigma_{\min}(\bm{A})$.

\subsection{Recovery guarantees}
Let us show that the these assumptions hold with high probability. First of all, note that $\| \mathcal{Q}^* \mathcal{R}_{\Omega} \mathcal{Q} \| \leq \| \mathcal{R}_{\Omega} \|$, hence Lemma \ref{guarantee:lemma:repetitions} applies. To derive probabilistic sufficient conditions for the modified RIP \eqref{side:eq:rip}, we need to prove analogs of Lemma~\ref{guarantee:lemma:core2interface}, Lemma~\ref{guarantee:lemma:core_projection_bound}, and Theorem~\ref{guarantee:theorem:rip_core}.

\begin{lemma}
\label{side:lemma:interface_projection_bound}
Let $\bm{A} = \mathcal{Q} \bm{B} \in \mathcal{M}^{(n)}_{\bm{r}}$ be a tensor of TT rank $\bm{r}$ with bounded core coherence $\mu_C(\bm{A}) \leq \mu_1$. Then for every $k \in [d-1]$ and for all multi-indices $(i_1, \ldots, i_d) \in [n_1] \times \ldots \times [n_d]$ we have
\begin{align*}
    \frac{n_1 \ldots n_k}{r_k} \| P_{\leq k} (Q_k^T e_{i_k} \otimes \ldots \otimes Q_1^T e_{i_1}) \|_2^2 \leq \mu_1^k, \quad 
    \frac{n_{k+1} \ldots n_d}{r_k} \| P_{\geq k+1} (Q_{k+1}^T e_{i_{k+1}} \otimes \ldots \otimes Q_d^T e_{i_d}) \|_2^2 \leq \mu_1^{d - k},
\end{align*}
where $P_{\leq k}$ and $P_{\geq k+1}$ are the orthogonal projections onto the column spans of the interface matrices $B_{\leq k}$ and $B_{\geq k+1}$.
\end{lemma}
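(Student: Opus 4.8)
The plan is to mirror the proof of Lemma~\ref{guarantee:lemma:core2interface}, the only genuinely new ingredient being the bookkeeping of the side-information matrices $Q_k$. I would start from a minimal left-orthogonal representation $\bm{B} = [\bm{U}_1^B, \ldots, \bm{U}_{d-1}^B, \bm{G}_d^B]$ and observe that, because $\bm{A} = \bm{B} \times_1 Q_1 \times_2 \cdots \times_d Q_d$, the $\bm{A}$-cores are obtained coreblock-wise from the $\bm{B}$-cores via
\begin{equation*}
    U_k^{(i)} = \sum_{j} Q_k(i, j)\, U_k^{B,(j)}, \quad i \in [n_k].
\end{equation*}
The first thing to verify is that this mode-$k$ contraction preserves left-orthogonality: since $Q_k^T Q_k = I_{m_k}$, a direct computation gives $\sum_i (U_k^{(i)})^T U_k^{(i)} = \sum_j (U_k^{B,(j)})^T U_k^{B,(j)} = I_{r_k}$. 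Combined with Lemma~\ref{side:lemma:product_rank} (which guarantees that the unfoldings stay full-rank, hence minimality), this shows $\bm{A} = [\bm{U}_1, \ldots, \bm{U}_{d-1}, \bm{G}_d]$ is a bona fide minimal left-orthogonal representation, so Lemma~\ref{guarantee:lemma:core_coherence} lets me translate the hypothesis $\mu_C(\bm{A}) \le \mu_1$ into $\mu_L(\bm{U}_k) \le \mu_1$, i.e. $\| U_k^{(i)} \|^2 \le \tfrac{r_k}{r_{k-1} n_k}\,\mu_1$ for every $i$.

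The crucial step is the identity that bridges the small-tensor interface matrix and the large-tensor cores:
\begin{equation*}
    (U_{\leq k}^B)^T (Q_k^T e_{i_k} \otimes \ldots \otimes Q_1^T e_{i_1}) = \left( U_1^{(i_1)} U_2^{(i_2)} \cdots U_k^{(i_k)} \right)^T \in \Real^{r_k}.
\end{equation*}
I would prove this by induction on $k$ using the recursion $U_{\leq k}^B = (I_{m_k} \otimes U_{\leq k-1}^B) U_k^{B,L}$ from \eqref{eq:tt:proj_recursive}: transposing and contracting against $Q_k^T e_{i_k} \otimes w$ peels off one factor, and the contraction of $U_k^{B,L}$ with $Q_k^T e_{i_k}$ in its leading index produces precisely $\sum_j Q_k(i_k, j) U_k^{B,(j)} = U_k^{(i_k)}$, leaving $(U_{\leq k-1}^B)^T w$ for the inductive hypothesis. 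This is the mechanism that converts a $\bm{B}$-interface projection into a product of $\bm{A}$-core subblocks, and I expect it to be the main obstacle, since everything else is routine once this bridge is established.

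To conclude, I note that $P_{\leq k}$ is the orthogonal projection onto the column span of the orthonormal $U_{\leq k}^B$, whence $\| P_{\leq k} w \|^2 = \| (U_{\leq k}^B)^T w \|^2$. Applying the identity with $w = Q_k^T e_{i_k} \otimes \ldots \otimes Q_1^T e_{i_1}$ and using submultiplicativity of the spectral norm (with $U_1^{(i_1)}$ a row vector, so its spectral and Euclidean norms coincide) gives
\begin{equation*}
    \| P_{\leq k} (Q_k^T e_{i_k} \otimes \ldots \otimes Q_1^T e_{i_1}) \|^2 \le \prod_{j=1}^{k} \| U_j^{(i_j)} \|^2 \le \prod_{j=1}^{k} \frac{r_j}{r_{j-1} n_j}\, \mu_1 = \frac{r_k}{n_1 \ldots n_k}\, \mu_1^k,
\end{equation*}
where the product telescopes since $r_0 = 1$; multiplying by $n_1 \ldots n_k / r_k$ yields the first bound. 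The second bound, for the right interface matrices $B_{\geq k+1}$, follows by the symmetric argument applied to the minimal right-orthogonal representation of $\bm{A}$, using $\mu_R(\bm{V}_{k+1}) \le \mu_1$ and the recursion for $V_{\geq k+1}$ in \eqref{eq:tt:proj_recursive}.
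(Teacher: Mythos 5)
Your proposal is correct and follows essentially the same route as the paper's proof: the paper also forms the transformed cores $S_k^L = (Q_k \otimes I_{r_{k-1}})U_k^{B,L}$, notes that they give a minimal left-orthogonal representation of $\bm{A}$ so the core-coherence hypothesis bounds their subblocks, and then peels off one core at a time via the interface recursion, writing $\xi_k = (U_k^{B,L})^T[Q_k^T e_{i_k} \otimes \xi_{k-1}] = (S_k^{(i_k)})^T\xi_{k-1}$ and bounding $\|\xi_k\| \le \|S_k^{(i_k)}\|\,\|\xi_{k-1}\|$ step by step. Your explicit unrolled identity $(U_{\le k}^B)^T(Q_k^T e_{i_k}\otimes\cdots\otimes Q_1^T e_{i_1}) = (U_1^{(i_1)}\cdots U_k^{(i_k)})^T$ followed by submultiplicativity is just the closed form of that same recursion, with the added (welcome) explicit check that the mode-$k$ contractions preserve left-orthogonality.
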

\begin{proof}
Let $\bm{B} = [\bm{U}_1, \ldots, \bm{U}_{d-1}, \bm{G}_d]$ be a minimal left-orthogonal TT representation. Then $\bm{S}_k \in \Real^{r_{k-1} \times n_k \times r_k}$ defined as
\begin{equation*}
    S_k^L = (Q_k \otimes I_{r_{k-1}}) U_k^L
\end{equation*}
give a minimal left-orthogonal TT representation of $\bm{A}$. Denote by $\xi_k$ the $r_{k}$-dimensional row-vector whose norm we need to estimate
\begin{equation*}
    \xi_k = U_{\leq k}^T (Q_k^T e_{i_k} \otimes \ldots \otimes Q_1^T e_{i_1}).
\end{equation*}
Given the recursive formula \eqref{eq:tt:proj_recursive} we establish that
\begin{equation*}
    \xi_k = (U_k^L)^T [Q_k^T e_{i_k} \otimes \xi_{k-1}], \quad \xi_0 = 1.
\end{equation*}
The incoherence assumption for $\bm{A}$ tells us that
\begin{equation*}
    \max_{i \in [n_k]} \| S_{k}^{(i)} \|_2^2 \leq \frac{r_{k}}{r_{k-1} n_k} \mu_1
\end{equation*}
and so since
\begin{equation*}
    S_{k}^{(i_k)} = (e_{i_k}^T \otimes I_{r_{k-1}}) S_k^L = (e_{i_k}^T Q_k \otimes I_{r_{k-1}}) U_k^L,
\end{equation*}
we obtain
\begin{equation*}
    \| \xi_k \|_2^2 = \| (U_k^L)^T [Q_k^T e_{i_k} \otimes \xi_{k-1}] \|_2^2 = \| (S_{k}^{(i_k)})^T \xi_{k-1} \|_2^2 \leq \frac{r_{k}}{r_{k-1} n_k} \mu_1 \| \xi_{k-1} \|_2^2 \leq \frac{r_k}{n_1 \ldots n_k} \mu_1^k.
\end{equation*}
The argument is the same for the right unfoldings.
\end{proof}

\begin{lemma}
\label{side:lemma:side_projection_bound}
Let $\bm{A} = \mathcal{Q} \bm{B} \in \mathcal{M}^{(n)}_{\bm{r}}$ be a tensor of TT rank $\bm{r}$ with bounded core coherence $\mu_C(\bm{A}) \leq \mu_1$. Assume that the coherences of the auxiliary subspaces are bounded as well $\mu(Q_k) \leq \mu_2$. Then for every canonical basis tensor $\bm{E}_{\omega} \in \Real^{n_1 \times \ldots \times n_d}$, $\omega \in [n_1] \times \ldots \times [n_d]$, its projection onto the tangent space $T_{\bm{B}} \mathcal{M}^{(m)}_{\bm{r}}$ can be bounded from above as
\begin{equation*}
    \Big\| \mathcal{P}_{T_{\bm{B}} \mathcal{M}^{(m)}_{\bm{r}}} \mathcal{Q}^* \bm{E}_\omega \Big\|_F^2 \leq \frac{\mu_1^{d-1} \mu_2}{n_1 \ldots n_d} \sum_{k = 1}^d r_{k-1} m_k r_k.
\end{equation*}
\end{lemma}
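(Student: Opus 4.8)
The plan is to follow the proof of Lemma~\ref{guarantee:lemma:interface_projection_bound} almost verbatim, the only genuinely new ingredient being the factor that the side-information matrices contribute along each mode. First I would observe that $\mathcal{Q}^*$ turns a canonical basis tensor into a rank-one tensor: since $\bm{E}_\omega = e_{i_1} \circ \ldots \circ e_{i_d}$ and each mode-$k$ product with $Q_k^T$ replaces $e_{i_k}$ by $q_k := Q_k^T e_{i_k} \in \Real^{m_k}$, we have $\mathcal{Q}^* \bm{E}_\omega = q_1 \circ \ldots \circ q_d$. The crucial numerical fact, read off from the coherence assumption $\mu(Q_k) \le \mu_2$, is that $\|q_k\|^2 = \|Q_k^T e_{i_k}\|^2 \le (m_k / n_k)\mu_2$ for every $k$.

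Next I would expand $\|\mathcal{P}_{T_{\bm{B}} \mathcal{M}^{(m)}_{\bm{r}}} \mathcal{Q}^* \bm{E}_\omega\|_F^2$ using the tangent-space projection formula \eqref{eq:tt:proj_tangent} and the same telescoping estimate as in Lemma~\ref{guarantee:lemma:interface_projection_bound} (dropping the nonpositive terms $-\|\mathcal{P}_{\leq k}\mathcal{P}_{\geq k+1}(\mathcal{Q}^*\bm{E}_\omega)\|_F^2$ and using $\mathcal{P}_{\leq 0} = \mathrm{Id}$), bounding it by
\[
\|\mathcal{P}_{\geq 2}(\mathcal{Q}^* \bm{E}_\omega)\|_F^2 + \sum_{k=2}^{d-1} \|\mathcal{P}_{\leq k-1}\mathcal{P}_{\geq k+1}(\mathcal{Q}^* \bm{E}_\omega)\|_F^2 + \|\mathcal{P}_{\leq d-1}(\mathcal{Q}^* \bm{E}_\omega)\|_F^2.
\]
Because the input is rank one, each summand factorizes over the three mode-groups. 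A generic middle term splits as $\|P_{\leq k-1}(q_{k-1}\otimes\ldots\otimes q_1)\|^2 \, \|q_k\|^2 \, \|P_{\geq k+1}(q_{k+1}\otimes\ldots\otimes q_d)\|^2$, and the two end terms split similarly with only one interface projection present.

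I would then bound the interface-projection factors by Lemma~\ref{side:lemma:interface_projection_bound}, which supplies $\|P_{\leq k-1}(q_{k-1}\otimes\ldots\otimes q_1)\|^2 \le (r_{k-1}/(n_1\ldots n_{k-1}))\mu_1^{k-1}$ together with the analogous right-hand estimate carrying $\mu_1^{d-k}$, while the untouched middle mode contributes $\|q_k\|^2 \le (m_k/n_k)\mu_2$. Multiplying these, the $n$-denominators telescope to $1/(n_1\ldots n_d)$, the two powers of $\mu_1$ combine to $\mu_1^{d-1}$, and the numerator becomes $r_{k-1} m_k r_k \mu_2$; thus the $k$-th summand is at most $\mu_1^{d-1}\mu_2 \, r_{k-1} m_k r_k / (n_1\ldots n_d)$, with the end terms $k=1$ and $k=d$ recovered using $r_0 = r_d = 1$. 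Summing over $k$ gives exactly $C_2$.

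The proof is essentially routine once Lemma~\ref{side:lemma:interface_projection_bound} is in place, so I do not expect a serious obstacle. The one point demanding care is the bookkeeping of which mode each factor lives on: in the plain core-coherence setting of Lemma~\ref{guarantee:lemma:core_projection_bound} the untouched mode-$k$ vector is a canonical basis vector of unit norm and simply drops out, whereas here it is $q_k$, and it is precisely this factor that injects the extra $\mu_2$ and converts the $n_k$ in the denominator into $m_k$. Keeping the Kronecker orderings consistent with those in \eqref{eq:tt:interface_rec} and Lemma~\ref{side:lemma:interface_projection_bound} is the only place where an index slip could creep in.
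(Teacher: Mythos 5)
Your proposal is correct and follows the paper's own proof essentially verbatim: the same telescoping bound from the tangent-space projection formula, the same rank-one factorization of $\mathcal{Q}^*\bm{E}_\omega$ into the vectors $Q_k^T e_{i_k}$, the same application of Lemma~\ref{side:lemma:interface_projection_bound} to the interface-projection factors, and the same use of $\|Q_k^T e_{i_k}\|^2 \leq (m_k/n_k)\mu_2$ for the untouched mode. The bookkeeping point you flag (the middle factor contributing $\mu_2$ and $m_k$ in place of the unit-norm basis vector) is exactly how the paper's estimates read.
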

\begin{proof}
We have
\begin{equation*}
    \Big\| \mathcal{P}_{T_{\bm{B}} \mathcal{M}^{(m)}_{\bm{r}}} \mathcal{Q}^* \bm{E}_\omega \Big\|_F^2 \leq \| \mathcal{P}_{\geq 2} \mathcal{Q}^* \bm{E}_\omega \|_F^2 + \sum_{k = 2}^{d-1} \| \mathcal{P}_{\leq k-1} \mathcal{P}_{\geq k+1} \mathcal{Q}^* \bm{E}_\omega \|_F^2 + \| \mathcal{P}_{\leq d-1} \mathcal{Q}^* \bm{E}_\omega \|_F^2.
\end{equation*}
For the first and last terms we obtain
\begin{equation*}
    \| \mathcal{P}_{\geq 2} \mathcal{Q}^* \bm{E}_\omega \|_F^2 = \| Q_1^T e_{i_1} \circ P_{\geq 2} (Q_2^T e_{i_{2}} \otimes \ldots \otimes Q_d^T e_{i_d}) \|_F^2 \leq \frac{m_1}{n_1} \mu_2 \frac{r_1}{n_2 \ldots n_d} \mu_1^{d-1}
\end{equation*}
and
\begin{equation*}
    \| \mathcal{P}_{\leq d-1} \mathcal{Q}^* \bm{E}_\omega \|_F^2 = \| P_{\leq d-1} (Q_{d-1}^T e_{i_{d-1}} \otimes \ldots \otimes Q_1^T e_{i_1})~\circ~Q_d^T e_{i_d} \|_F^2 \leq \frac{r_{d-1}}{n_1 \ldots n_{d-1}} \mu_1^{d-1} \frac{m_d}{n_d} \mu_2.
\end{equation*}
The summands $\| \mathcal{P}_{\leq k-1} \mathcal{P}_{\geq k+1} \mathcal{Q}^* \bm{E}_\omega \|_F^2$ in the middle are equal to
\begin{align*}
    \| P_{\leq k-1}(Q_{k-1}^T e_{i_{k - 1}} \otimes \ldots \otimes Q_1^T e_{i_1}) &\circ~Q_k^T e_{i_k} \circ P_{\geq k + 1} (Q_{k+1}^T e_{i_{k+1}} \otimes \ldots \otimes Q_d^T e_{i_d}) \|_F^2 \\
    &\leq \frac{r_{k-1}}{n_{1} \ldots n_{k-1}} \mu_1^{k - 1} \frac{m_k}{n_k} \mu_2 \frac{r_{k}}{n_{k+1} \ldots n_d} \mu_1^{d - k}.
\end{align*}
It remains to combine the estimates.
\end{proof}

\begin{theorem}\label{side:theorem:rip}
Let $\bm{A} = \mathcal{Q} \bm{B} \in \mathcal{M}^{(n)}_{\bm{r}}$ be a tensor of TT rank $\bm{r}$ with bounded core coherence $\mu_C(\bm{A}) \leq \mu_1$. Assume that the coherences of the auxiliary subspaces are bounded as well $\mu(Q_k) \leq \mu_2$ and let $\Omega \subset [n_1] \times \ldots \times [n_d]$ be a collection of indices sampled uniformly at random with replacement. Then the modified RIP \eqref{side:eq:rip}
\begin{equation*}
    \Big\| \mathcal{P}_{T_{\bm{B}} \mathcal{M}^{(m)}_{\bm{r}}} - \rho^{-1} \mathcal{P}_{T_{\bm{B}} \mathcal{M}^{(m)}_{\bm{r}}} \mathcal{Q}^* \mathcal{R}_{\Omega} \mathcal{Q} \mathcal{P}_{T_{\bm{B}} \mathcal{M}^{(m)}_{\bm{r}}} \Big\| < \varepsilon, \quad \rho = \frac{|\Omega|}{n_1 \ldots n_d},
\end{equation*}
holds with probability at least $1 - 2m^{d(1 - \beta)}$, $m = \max( m_1, \ldots, m_d )$, for all $\beta > 1$ provided that
\begin{equation*}
    |\Omega| \geq \frac{8}{3} \frac{\beta}{\varepsilon^2} \mu_1^{d-1} \mu_2 \left( \sum_{k = 1}^d r_{k-1} m_k r_k \right) d\log(m).
\end{equation*}
\end{theorem}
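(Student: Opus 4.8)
The plan is to mirror the proof of Theorem \ref{guarantee:theorem:rip_interface} almost verbatim, with the canonical basis tensors $\bm{E}_\omega$ replaced by their pullbacks $\mathcal{Q}^* \bm{E}_\omega$, and with the role formerly played by orthonormality of $\{ \bm{E}_\omega \}$ now taken over by the identity $\mathcal{Q}^* \mathcal{Q} = \mathrm{Id}$. Abbreviating $\mathcal{P}_{\bm{B}}$ for $\mathcal{P}_{T_{\bm{B}} \mathcal{M}^{(m)}_{\bm{r}}}$ and recalling $\mathcal{R}_\Omega = \sum_{\omega \in \Omega} \langle \cdot, \bm{E}_\omega \rangle_F \bm{E}_\omega$, I would first expand, using $\langle \mathcal{Q} \bm{W}, \bm{E}_\omega \rangle_F = \langle \bm{W}, \mathcal{Q}^* \bm{E}_\omega \rangle_F$,
\begin{equation*}
    \mathcal{P}_{\bm{B}} \mathcal{Q}^* \mathcal{R}_{\Omega} \mathcal{Q} \mathcal{P}_{\bm{B}} = \sum_{\omega \in \Omega} \mathcal{S}_\omega, \quad \mathcal{S}_\omega \bm{W} = \langle \bm{W}, \mathcal{P}_{\bm{B}} \mathcal{Q}^* \bm{E}_\omega \rangle_F \, \mathcal{P}_{\bm{B}} \mathcal{Q}^* \bm{E}_\omega,
\end{equation*}
a sum of $|\Omega|$ i.i.d.\ positive semidefinite random operators indexed by the uniformly drawn $\omega$.

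The expected value is where the side information enters. Averaging over all $\omega \in [n_1] \times \ldots \times [n_d]$ and pulling the projections out, the inner sum $\sum_\omega \langle \cdot, \mathcal{Q}^* \bm{E}_\omega \rangle_F \mathcal{Q}^* \bm{E}_\omega$ collapses to $\mathcal{Q}^* \mathcal{Q} = \mathrm{Id}$, so that $\E[\mathcal{S}_\omega] = (n_1 \ldots n_d)^{-1} \mathcal{P}_{\bm{B}}$ exactly as in the base case. The deviation is then controlled by Lemma \ref{side:lemma:side_projection_bound}: since both $\mathcal{S}_\omega$ and $(n_1 \ldots n_d)^{-1} \mathcal{P}_{\bm{B}}$ are positive semidefinite, $\| \mathcal{S}_\omega - (n_1 \ldots n_d)^{-1} \mathcal{P}_{\bm{B}} \| \leq \max( \| \mathcal{P}_{\bm{B}} \mathcal{Q}^* \bm{E}_\omega \|_F^2, (n_1 \ldots n_d)^{-1} ) = C_2$. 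Using $\mathcal{S}_\omega^2 = \| \mathcal{P}_{\bm{B}} \mathcal{Q}^* \bm{E}_\omega \|_F^2 \, \mathcal{S}_\omega$ together with the bound $C_2$ once more, the variance satisfies $\| \E[ ( \mathcal{S}_\omega - (n_1 \ldots n_d)^{-1} \mathcal{P}_{\bm{B}} )^2 ] \| \leq C_2 / (n_1 \ldots n_d)$.

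Finally I would invoke the second part of the noncommutative Bernstein inequality (Theorem \ref{guarantee:theorem:bernstein}) with $R = C_2$, total variance $\sum_k \sigma_k^2 = |\Omega| \, C_2 / (n_1 \ldots n_d) = \rho C_2$, and $\tau = \rho \varepsilon$; the auxiliary hypothesis $\tau \leq \sum_k \sigma_k^2 / R$ reduces to $\varepsilon \leq 1$ and so is met. The one genuinely new point---and the reason the estimate depends on $m$ rather than $n$---is the dimension entering the Bernstein prefactor: the operators $\mathcal{S}_\omega$ act on $\Real^{m_1 \times \ldots \times m_d}$, not on the ambient $\Real^{n_1 \times \ldots \times n_d}$, because $\mathcal{Q}^*$ lands in the small space and $\mathcal{P}_{\bm{B}}$ projects within it. Vectorized they are $(m_1 \ldots m_d) \times (m_1 \ldots m_d)$ matrices, so the prefactor is $2 m_1 \ldots m_d \leq 2 m^d$ in place of $2 n^d$, giving
\begin{equation*}
    \Prob \{ \| \mathcal{P}_{\bm{B}} - \rho^{-1} \mathcal{P}_{\bm{B}} \mathcal{Q}^* \mathcal{R}_{\Omega} \mathcal{Q} \mathcal{P}_{\bm{B}} \| > \varepsilon \} \leq 2 m^d \exp \left( -\tfrac{3}{8} \tfrac{\rho \varepsilon^2}{C_2} \right) \leq 2 m^{d(1 - \beta)}
\end{equation*}
whenever $\rho \geq \tfrac{8}{3} \tfrac{C_2}{\varepsilon^2} d \beta \log m$, as claimed. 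I expect no real obstacle beyond bookkeeping; the only subtlety is to track that the small dimension $m$, rather than $n$, governs the concentration, which is precisely what lets side information cut the size dependence of the sample complexity down to logarithmic in $m$.
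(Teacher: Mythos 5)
Your proposal is correct and follows essentially the same route as the paper: the paper likewise defines $\mathcal{S}_\omega \bm{Z} = \langle \bm{Z}, \mathcal{P}_{\bm{B}} \mathcal{Q}^* \bm{E}_\omega \rangle_F \, \mathcal{P}_{\bm{B}} \mathcal{Q}^* \bm{E}_\omega$ as operators on $\Real^{m_1 \times \ldots \times m_d}$ and then simply states that the argument of Theorem \ref{guarantee:theorem:rip_interface} carries over, which is exactly the expectation/deviation/variance/Bernstein chain you spelled out. Your explicit observations---that $\E[\mathcal{S}_\omega] = (n_1 \ldots n_d)^{-1}\mathcal{P}_{\bm{B}}$ via $\mathcal{Q}^*\mathcal{Q} = \mathrm{Id}$, and that the Bernstein prefactor is $2m_1 \ldots m_d \leq 2m^d$ because the operators live on the small space---are precisely the details the paper leaves implicit.
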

\begin{proof}
For an arbitrary tensor $\bm{Z} \in \Real^{m_1 \times \ldots \times m_d}$ we can represent $\mathcal{Q} \bm{Z}$ as 
\begin{equation*}
    \mathcal{Q} \bm{Z} = \sum_{\omega \in [n_1] \times \ldots \times [n_d]} \langle \mathcal{Q} \bm{Z}, \bm{E}_{\omega} \rangle_F \bm{E}_{\omega}.
\end{equation*}
Denote by $\mathcal{P}_{\bm{B}}$ the projection $\mathcal{P}_{T_{\bm{B}} \mathcal{M}^{(m)}_{\bm{r}}}$. It follows that
\begin{equation*}
    \mathcal{P}_{\bm{B}} \bm{Z} = \sum_{\omega \in [n_1] \times \ldots \times [n_d]} \langle \bm{Z}, \mathcal{P}_{\bm{B}} \mathcal{Q}^* \bm{E}_{\omega} \rangle_F \mathcal{P}_{\bm{B}} \mathcal{Q}^* \bm{E}_{\omega}
\end{equation*}
and
\begin{equation*}
    \mathcal{P}_{\bm{B}} \mathcal{Q}^* \mathcal{R}_\Omega \mathcal{Q} \mathcal{P}_{\bm{B}} \bm{Z} = \sum_{\omega \in \Omega} \langle \bm{Z}, \mathcal{P}_{\bm{B}} \mathcal{Q}^* \bm{E}_\omega \rangle_F \mathcal{P}_{\bm{B}} \mathcal{Q}^* \bm{E}_\omega.
\end{equation*}
As we introduce operators $\mathcal{S}_\omega : \Real^{m_1 \times \ldots \times m_d} \to \Real^{m_1 \times \ldots \times m_d}$ defined by
\begin{equation*}
    \mathcal{S}_\omega \bm{Z} = \langle \bm{Z}, \mathcal{P}_{\bm{B}} \mathcal{Q}^* \bm{E}_\omega \rangle_F \mathcal{P}_{\bm{B}} \mathcal{Q}^* \bm{E}_\omega
\end{equation*}
the proof follows the proof of Theorem \ref{guarantee:theorem:rip_interface}.
\end{proof}

We believe that Theorem~\ref{side:theorem:rip} is the first theoretical estimate for the sample complexity of TT completion with subspace information. Previous results on matrix completion with subspace information contained a $\log(n)$ factor in the sample complexity \cite{XuEtAlSpeedup2013}; our bound, which guarantees the local convergence of the RGD, depends only on the dimensions of the auxiliary subspaces and not on the dimensions of the tensor:
\begin{equation*}
    |\Omega| \gtrsim \mu_1^{d-1} \mu_2 d^2 r^2 m \log(m).
\end{equation*}
This behavior is further well-aligned with the numerical experiments carried out in \cite{BudzinskiyZamarashkinNote2020}.

\printbibliography

\end{document}